\documentclass[a4paper, 12pt]{amsart}

\usepackage{cite}

\pdfoutput=1
\usepackage{latexsym, amsmath, amssymb, amsthm, mathrsfs, bm, mathtools,comment}
\usepackage[mathscr]{eucal}
\usepackage{subfigure}

\usepackage{array}
\usepackage{tikz}
\usetikzlibrary{cd}
\usepackage{aliascnt}
\usepackage{graphicx}
\usepackage[T1]{fontenc}
\usepackage{mathptmx}
\usepackage{microtype}
\usepackage[colorlinks=true,linkcolor=blue,urlcolor=blue]{hyperref}

\usepackage[onehalfspacing]{setspace}

\usepackage[inner=2.4cm,outer=2.4cm, bottom=3.2cm]{geometry}

\DeclareMathAlphabet{\altmathcal}{OMS}{cmsy}{m}{n}


\allowdisplaybreaks

\usepackage{xcolor, color, soul}

\usepackage{color}
\usepackage[all]{xy}
\usepackage{enumerate}
\usepackage{mathbbol,mathtools}

\renewcommand{\geq}{\geqslant}



\newtheorem{theorem}{Theorem}[section]

\newtheorem{headthm}{Theorem}

\newaliascnt{headcor}{headthm}

\aliascntresetthe{headcor}

\newaliascnt{headconj}{headthm}

\aliascntresetthe{headconj}

\newaliascnt{corollary}{theorem}
\newtheorem{corollary}[corollary]{Corollary}
\aliascntresetthe{corollary}

\newaliascnt{claim}{theorem}

\aliascntresetthe{claim}

\newaliascnt{lemma}{theorem}
\newtheorem{lemma}[lemma]{Lemma}
\aliascntresetthe{lemma}

\newaliascnt{conjecture}{theorem}
\newtheorem{conjecture}[conjecture]{Conjecture}
\aliascntresetthe{conjecture}

\newaliascnt{proposition}{theorem}
\newtheorem{proposition}[proposition]{Proposition}
\aliascntresetthe{proposition}

\theoremstyle{definition}
\newaliascnt{definition}{theorem}
\newtheorem{definition}[definition]{Definition}
\aliascntresetthe{definition}

\newaliascnt{notation}{theorem}

\aliascntresetthe{notation}

\newaliascnt{example}{theorem}
\newtheorem{example}[example]{Example}
\aliascntresetthe{example}

\newaliascnt{examples}{theorem}

\aliascntresetthe{examples}

\newaliascnt{remark}{theorem}
\newtheorem{remark}[remark]{Remark}
\aliascntresetthe{remark}

\newaliascnt{fact}{theorem}

\aliascntresetthe{fact}

\newaliascnt{question}{theorem}
\newtheorem{question}[question]{Question}
\aliascntresetthe{question}

\newaliascnt{questions}{theorem}

\aliascntresetthe{questions}

\newaliascnt{problem}{theorem}

\aliascntresetthe{problem}

\newaliascnt{construction}{theorem}

\aliascntresetthe{construction}

\newaliascnt{setup}{theorem}
\newtheorem{setup}[setup]{Setup}
\aliascntresetthe{setup}

\newaliascnt{algorithm}{theorem}

\aliascntresetthe{algorithm}

\newaliascnt{observation}{theorem}

\aliascntresetthe{observation}

\newaliascnt{discussion}{theorem}

\aliascntresetthe{discussion}

\newaliascnt{defprop}{theorem}

\aliascntresetthe{defprop}

\def\equationautorefname~#1\null{(#1)\null}
\def\sectionautorefname~#1\null{Section #1\null}
\def\subsectionautorefname~#1\null{\S #1\null}





\def \Spec{{\operatorname{Spec\, }}}

\def\Ass{\operatorname{Ass}}

\def \conv{{\operatorname{conv}}}

\def \lcm{{\operatorname{lcm}}}

\def \ord{{\operatorname{ord}}}
\def \NP{{\operatorname{NP}}}
\def \RV{{\operatorname{\mathscr{RV}}}}

\DeclareMathOperator{\spann}{span}
\DeclareMathOperator{\gr}{gr}

\DeclareMathOperator{\Quot}{Quot}

\DeclareMathOperator{\GL}{GL}

\def \f1{\mathbf{1}}
\def\ba{\mathbf{a}}

\newcommand{\kk}{\mathbb{k}}

\def\xi{x}

\def\ls{\leqslant}
\def\gs{\geqslant}

\def\uv{\underline{v}}

\def\fm{\mathfrak{m}}
\def\frp{\mathfrak{p}}
\def\frq{\mathfrak{q}}

\def\fX{\mathbf{X}}

\def\fb{\mathfrak{b}}
\def\fc{\mathfrak{c}}

\def\bn{\mathbf{n}}

\def\bh{\mathbf{h}}

\def\bmm{\mathbf{m}}

\def \CC{\mathbb C}
\def \QQ{\mathbb Q}
\def \RR{\mathbb R}
\def \KK{\mathbb K}

\def \NN{ {\mathbb Z}_{\gs 0}}
\def \ZZz{ {\mathbb Z}_{> 0}}

\def \QQez{ {\mathbb Q}_{\gs 0}}
\def \ZZ{\mathbb Z}

\def \C{\mathcal C}
\def \T{\mathcal T}
\def \cS{\mathcal S}

\def \O{\mathcal O}
\def \G{\mathcal G}
\def \R{\mathcal R}
\def \V{\mathcal V}
\def \H{\mathcal H}
\def \J{\mathcal J}
\def \I{\mathcal I}
\def \P{\mathcal P}

\def \B{\mathcal B}

\def \O{\mathcal O}
\def \F{\mathcal F}
\def \W{\mathcal W}

\def \cP{\mathscr P}
\def \cS{\mathscr S}
\def \cT{\mathscr T}

    \def\fx{\mathbf{x}}

\def\fe{\mathbf{e}}

\dedicatory{{Dedicated to Bernd Ulrich on the occasion of his seventieth birthday.}}

\begin{document}

\title[Rational powers, invariant ideals, and the summation formula]{Rational powers, invariant ideals, and the summation formula}

\author[Sankhaneel Bisui]{Sankhaneel Bisui}
\address{Sankhaneel Bisui\\School of Mathematical and Statistical Sciences, Arizona State University, P.O. Box 871804, Tempe, AZ 85287-18041, USA. \emph{Email:} {\rm sankhaneel.bisui@asu.edu}}

\author[Sudipta Das]{Sudipta Das}
\address{Sudipta Das\\School of Mathematical and Statistical Sciences, Arizona State University, P.O. Box 871804, Tempe, AZ 85287-18041, USA. \emph{Email:} {\rm sdas137@asu.edu}}

\author[T\`ai Huy H\`a]{T\`ai Huy H\`a$^1$}
\address{T\`ai Huy H\`a\\Mathematics Department, Tulane University, 6823 St. Charles Avenue, New Orleans, LA 70118, USA. \emph{Email:} {\rm tha@tulane.edu}}
 \thanks{$^{1}$ The third author  was partially supported by the Simons Foundation.}

\author[Jonathan Monta{\~n}o]{Jonathan Monta{\~n}o$^2$}
\address{Jonathan Monta{\~n}o\\School of Mathematical and Statistical Sciences, Arizona State University, P.O. Box 871804, Tempe, AZ 85287-18041, USA. \emph{Email:} {\rm montano@asu.edu}}
 \thanks{$^{2}$ The fourth author was partially funded by NSF Grant DMS \#2001645/2303605 and DMS \#2401522.}

  \begin{abstract}
We provide  explicit descriptions  for the rational powers  and Rees valuations of several classes of ideals 
invariant under  natural actions of tori and products of general linear groups, in terms of 
polyhedra and lattice points. This allows us to show that a
version of Musta\c{t}\u{a}-Takagi's  summation formula  for multiplier ideals also holds for the  rational powers of these ideals.
 Moreover, for arbitrary ideals in normal domains that are finitely generated over algebraically closed fields, we prove a weaker version of this formula that holds for
sufficiently large rational numbers.  
  \end{abstract}

\keywords{Rational powers, integral closures, Rees valuations, multiplier ideals, invariant ideals.}
\subjclass[2020]{Primary: 13B22, 13A50, 13A18. Secondary: 14E05,   05E40.}

\maketitle


\section{Introduction}

Rees valuations and rational powers of ideals play a central role in many results in algebraic geometry and commutative algebra \cite{hong2014specialization, Itoh88,DeStefani16, Ciuperca21, Ciuperca20, Lewis23, computing_rat_pow_23, knutson2005balanced, ValeryKnutson10, DiPasq_et_al_19, RVS25}.  These two sets of invariants are so closely related that each uniquely determines the other (see \autoref{rem:determine}). However, explicitly computing them is  a difficult task because it requires determining the order valuations for every prime divisor arising in the exceptional divisor of the normalized blowup of the ideal \cite{Rees56}. In fact, complete descriptions of Rees valuations in the literature  are limited to  monomial ideals, where they correspond to supporting hyperplanes of  Newton polyhedra \cite[Theorem 10.3.5]{huneke2006integral}. One of the primary goals of this paper is to provide explicit descriptions of both Rees valuations and rational powers for several classes of ideals in polynomial rings that are invariant under  natural actions of tori and products of general linear groups.

Originally considered by Samuel and Rees in the 1950s \cite{Samuel52, Rees56}, the notion of rational powers of an ideal 
$I$ in a Noetherian domain  
$R$ was introduced  by Lejeune-Jalabert and Teissier \cite[\S 4.2]{lejeune2008cloture}. The power $\overline{I^w}$ for $w\in \QQ_{\gs 0}$ is defined as $\overline{I^w}:= \cap_v \{x \mid v(x) \geq w v(I)\}$, where the intersection is taken over all the discrete valuations of $\Quot(R)$ that are nonnegative in $R$, or alternatively, over the Rees valuations of $I$.  This construction provides a powerful tool for understanding the structure of filtrations of ideals such as integral closures \cite{hong2014specialization,Itoh88}, symbolic powers \cite{DiPasq_et_al_19}, Frobenius powers \cite{RVS25}, and multiplier ideals \cite[\S9.6]{LAZARSFELD}.  Despite its historical roots,  rational powers have  seen limited development, as noted by Knutson in 2010, who observed that this {\it ``beautiful construction has sat essentially unused in the literature for over 50 years now"} \cite{Knutson_MathOverflow_Rational}.

Let $X = X_{m \times n}$ be a generic  $m \times n$ matrix, meaning its entries are distinct variables. The polynomial ring $\mathbb{C}[X]$ carries a natural action of $\mathrm{GL}_m(\mathbb{C}) \times \mathrm{GL}_n(\mathbb{C})$ defined by $
(A, B) \cdot X = A^{-1} X B
$,
where \(A \in \mathrm{GL}_m(\mathbb{C})\) and \(B \in \mathrm{GL}_n(\mathbb{C})\). The ideals in  the polynomial ring $\CC[X]$ that are invariant under this action have been extensively studied in the literature, see e.g. \cite{deConcini80,deConcini76,Bruns91,BRUNS_VETTER,Henriques_Varbaro,JEFFRIES_MONTANO_VARBARO,DeSMNB24,BrC98}. 
The combination of representation theory and the theory of algebras with straightening laws has led to a wide range of explicit results about  the structure and properties of these  ideals. 
If one considers a symmetric generic matrix $Y=Y_{n\times n}$ and $\GL_n(\CC)$-action on $\CC[Y]$ given by $A\cdot Y=AYA^t$, or a skew-symmetric generic matrix $Z=Z_{n\times n}$ with  $\GL_n(\CC)$-action on $\CC[Z]$ given by $A\cdot Z=AZA^t$, one obtains analogous well-studied invariant ideals, see e.g. the references above and \cite{Abeasis80,Abeasis80Young,DeNegri96,Conca94}.

We introduce the concept of {\it Rees package}, which is a combinatorial characteristic-free construction that encapsulates the description of both Rees valuations and rational powers in terms of polyhedral data (see \autoref{def_Rees_Package}). In our first theorem, we show that the  classes of invariant ideals described above admit such a Rees package. Furthermore, in arbitrary characteristic,  we show that  monomial ideals in affine semigroup rings,  sums of products of determinantal ideals of generic, symmetric, and skew-symmetric matrices, and products of determinantal ideals of Hankel matrices, also admit  a Rees package. 

\begin{headthm}
	\label{thmB}
	Let $\kk$ be an arbitrary field. The following classes of ideals admit a Rees package. In particular, their 	
	rational powers and Rees valuations can be explicitly  described in terms of	lattice points  and supporting hyperplanes of  polyhedra.
	\begin{enumerate}[{\rm (a)}]
		\item{\rm (}\autoref{mon.aff.rees}{\rm )}  Monomial ideals in affine semigroup rings.
		\item{\rm (}\autoref{gen_det_rees}, \autoref{cor_inv_gen}{\rm )}  Sums of products of determinantal ideals in $\kk[X_{m\times n}]$ of a generic matrix $X_{m\times n}$. If $\kk=\CC$, more generally  any $\GL_m(\CC) \times \GL_n(\CC)$-invariant ideal of $\CC[X_{m\times n}]$.
		\item{\rm (}\autoref{gen_sym_rees}, \autoref{cor_inv_sym}{\rm )}  Sums of products of determinantal ideals in $\kk[Y_{n\times n}]$ of a  generic symmetric matrix $Y_{n\times n}$. If $\kk=\CC$, more generally  any $\GL_n(\CC)$-invariant ideal of  $\kk[Y_{n\times n}]$
		\item{\rm (}\autoref{gen_sym_pff}, \autoref{cor_inv_Pf}{\rm )}  Sums of products of  ideals of Pfaffians in $\kk[Z_{n\times n}]$ of a skew-symmetric generic matrix $Z_{n\times n}$.  If $\kk=\CC$, more generally  any $\GL_n(\CC)$-invariant ideal of $\CC[Z_{n\times n}]$.
		\item{\rm (}\autoref{gen_Hank}{\rm )}  Products of determinantal ideals  in $\kk[W]$ of a Hankel matrix   of variables $W$.
	\end{enumerate}
\end{headthm}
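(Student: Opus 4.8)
The proof of all five parts runs through the single unifying device of \autoref{def_Rees_Package}: a \emph{Rees package} for an ideal $I$ consists of a rational polyhedron $P$ together with a dictionary that sends the supporting hyperplanes of $P$ to the Rees valuations of $I$ and the lattice points of the dilate $wP$ to a generating set of $\overline{I^{w}}$. The plan is to establish the existence of such a package for monomial ideals in affine semigroup rings first, and then to bootstrap from that base case to the determinantal families by combining equivariance with a toric degeneration.

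\emph{The affine-semigroup base case} (part (a); \autoref{mon.aff.rees}). Fix an affine semigroup $S\subseteq\ZZ^{d}$ with real cone $C=\RR_{\geq 0}S$ and a monomial ideal $I\subseteq\kk[S]$ with exponent set $E\subseteq S$. I would take $P=\NP(I):=\conv(E)+C$ and prove two statements. First, $\overline{I^{w}}$ is generated by the monomials $\fx^{a}$ with $a\in wP\cap S$; this is the valuative criterion together with the fact that the normalized blowup of a monomial ideal is again toric, so that only the monomial (divisorial) valuations enter the intersection defining the rational power, and those valuations are exactly the ones attached to facets of $\NP(I)$. Second, the Rees valuations of $I$ are precisely the monomial order valuations attached to those facets $H$ of $\NP(I)$ whose supporting linear functional is positive on $E$ --- equivalently, the facets whose associated valuation has center inside $V(I)$. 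Since $\NP$ turns sums of monomial ideals into convex hulls of unions and products into Minkowski sums, the package then follows --- uniformly in $w$ --- for every monomial ideal assembled from these operations.

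\emph{Transfer to the determinantal and Pfaffian families} (parts (b)--(e); \autoref{gen_det_rees}, \autoref{gen_sym_rees}, \autoref{gen_sym_pff}, \autoref{gen_Hank} and their corollaries). For the generic matrix $X_{m\times n}$, the generic symmetric matrix $Y_{n\times n}$, the generic skew-symmetric matrix $Z_{n\times n}$, and the Hankel matrix, the group-invariant ideals are classified combinatorially --- by filters in a poset of partitions or shapes (De~Concini--Eisenbud--Procesi in the generic case and its symmetric and Pfaffian analogues; by the poset of minors in the Hankel case) --- and each such ideal is a finite sum of products of the basic determinantal (respectively Pfaffian) ideals. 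I would therefore pass to the Gr\"obner/SAGBI degeneration of the relevant coordinate ring under which every product of minors (or Pfaffians) specializes to a monomial: a product of determinantal ideals then degenerates to a monomial ideal in a normal affine semigroup ring, and the base case applies on the special fiber. The point that must be checked is that this degeneration is compatible with \emph{integral closure} and with the formation of Rees valuations, and not merely with the ideals themselves; this holds because the ordinary and symbolic Rees algebras of these determinantal and Pfaffian ideals are known to be Noetherian and normal, so the normalized blowup degenerates in a flat family with reduced normal special fiber, whence the generic and special fibers carry matching collections of exceptional prime divisors. Transporting $P$ along the family yields a Rees package for a product of determinantal ideals, and then, exactly as in the base case, summing over the classifying filter gives one for an arbitrary invariant ideal --- this is \autoref{cor_inv_gen}, \autoref{cor_inv_sym}, \autoref{cor_inv_Pf}.

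\emph{The main obstacle.} The crux is the compatibility just invoked: ``degenerate, compute among monomials, lift back'' must be shown to respect the entire integral-closure filtration and to neither lose nor gain a Rees valuation in the limit. This is precisely where the normality of the (symbolic) Rees algebras of determinantal and Pfaffian varieties is indispensable, and it is most delicate for the Hankel matrix, which is far from generic and whose governing toric ring is nonstandard. A secondary subtlety is the bookkeeping of parities in the skew-symmetric case, where the combinatorial data indexing the Pfaffians must be matched with the lattice points of the associated polyhedron exactly, not merely up to a bounded error.
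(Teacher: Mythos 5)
Your part (a) is essentially the paper's argument: \autoref{mon.aff.rees} takes the Newton polyhedron $\conv(\log I)+\C(\cS)$, uses the facet valuations of the cone, and reads off Rees valuations from the non-coordinate supporting hyperplanes via \autoref{prop_min_vals}; the only input is that integral closures of powers of monomial ideals are described by lattice points of dilated Newton polyhedra (\autoref{int.cls.mon.aff}).

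For parts (b)--(e), however, your transfer step has a genuine gap, and it sits exactly where you place your ``main obstacle.'' You propose to degenerate to a toric special fiber, compute there, and lift back, justifying the compatibility with integral closure and Rees valuations by the normality of the (symbolic) Rees algebras. That implication is not established and is not generally true: Gr\"obner/SAGBI degeneration does not commute with integral closure (the initial ideal of $\overline{I^n}$ need not be the integral closure of the initial ideal of $I^n$), normality of the Rees algebra of $I$ does not give a flat family of \emph{normalized} blowups with normal special fiber, and the set of exceptional prime divisors --- hence the set of Rees valuations --- can change in the limit. Asserting that ``the generic and special fibers carry matching collections of exceptional prime divisors'' is precisely the statement that would need proof, and no mechanism for it is supplied. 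The paper avoids the degeneration entirely: the standard monomial basis $\B$ (products of minors/Pfaffians in standard form) is a $\kk$-basis of the actual ring $R$, the functions $\gamma_t$ are honest valuations on $R$ itself (by the straightening laws of De Concini--Eisenbud--Procesi, Abeasis, Conca), the description of $\overline{I_\Lambda^n}$ as the span of standard monomials with $\underline{\gamma}$-value in $n\Gamma$ is quoted from Henriques--Varbaro, and \autoref{lemma_Rees_package} (verified via the domain property of the relevant associated graded/Segre products) shows that the nonnegative integer combinations of the $\gamma_t$ attached to non-coordinate facets are again valuations. \autoref{prop_Rees_Package}, which rests on the characterization of Rees valuations as the unique minimal family computing the filtration $\{\overline{I^n}\}$, then delivers both conclusions at once. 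Finally, the passage to invariant ideals is not a summation over the classifying filter but the single identity $\overline{\I(\Lambda)}=\overline{I_\Lambda}$ (and its analogues), which transports the Rees package verbatim. If you want to salvage your route you would need to prove the degeneration compatibility for each family separately, which is at least as hard as the standard-monomial statements the paper cites.
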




In \cite{mustacta2002multiplier}, Musta\c{t}\u{a} showed a summation formula for multiplier ideals of pairs ideal sheaves, and this  formula 
 was later generalized by Takagi  \cite{takagi2006formulas} and by Jow-Miller \cite{Jow_Miller08}. Musta\c{t}\u{a}-Takagi  summation formula  raises the natural question of whether similar identities hold for symbolic powers and rational  powers of  ideals. The former was 
established in \cite{ha2020symbolic, HJKN2023} and the latter in  \cite{banerjee2023integral} for monomial ideals. Thus we propose the following conjecture (cf. \cite[Question 2.4]{banerjee2023integral}).

\begin{conjecture}\label{The_Question}
Let $\kk$ be an algebraically closed field and
$R$ and $S$   normal domains finitely generated over  $\kk$.  Set $T:=R\otimes_\kk S$. 
The following  formula holds for every $w\in \QQ_{\gs 0}$ and nonzero ideals $I\subset R$ and
 $J\subset S$
\begin{equation*}\label{eqn_The_Q}
\overline{\left(IT+JT\right)^w}= \sum_{\stackrel{0\ls \alpha\ls w,}{{\scriptscriptstyle \alpha\in \QQ}}}\overline{I^\alpha}\,\overline{J^{w-\alpha}}\,T.
\end{equation*}
\end{conjecture}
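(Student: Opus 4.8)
The plan is to treat the two containments separately. The containment ``$\supseteq$'' I expect to hold for arbitrary normal Noetherian domains, with no use of $\kk$ being algebraically closed. Writing $w=p/q$ with $p,q$ positive integers, normality of $T$ gives $\overline{(IT+JT)^{p/q}}=\{h\in T:h^q\in\overline{(IT+JT)^p}\}$, and likewise over $R$ and $S$; combined with the elementary fact that extending an ideal along a ring homomorphism sends its integral closure into the integral closure of the extension, this yields $\overline{I^\alpha}\,T\subseteq\overline{(IT)^\alpha}$ and $\overline{J^{w-\alpha}}\,T\subseteq\overline{(JT)^{w-\alpha}}$ for $0\leq\alpha\leq w$. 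For any valuation $v$ of $\Quot(T)$ that is nonnegative on $T$ one has $v(IT+JT)=\min\{v(IT),v(JT)\}$, so for $f\in\overline{(IT)^\alpha}$ and $g\in\overline{(JT)^{w-\alpha}}$,
\[
v(fg)=v(f)+v(g)\geq\alpha\,v(IT)+(w-\alpha)\,v(JT)\geq w\,\min\{v(IT),v(JT)\}=w\,v(IT+JT).
\]
Hence $\overline{I^\alpha}\,\overline{J^{w-\alpha}}\,T\subseteq\overline{(IT+JT)^w}$, and summing over $\alpha\in[0,w]\cap\QQ$ gives ``$\supseteq$''.

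The real content is the reverse containment $\overline{(IT+JT)^w}\subseteq\sum_{0\leq\alpha\leq w}\overline{I^\alpha}\,\overline{J^{w-\alpha}}\,T$, which I would establish class by class using the Rees packages of \autoref{thmB}. The first step is to promote the Rees packages of $I$ and $J$ to one for $IT+JT$: its governing polyhedron should be the ``join'' of those of $I$ and $J$ — roughly, the convex hull of the two polyhedra placed in complementary coordinates, closed up in the positive directions — and its essential supporting hyperplanes should be exactly the pairs consisting of a supporting hyperplane of the first and one of the second, normalized to a common value on the respective generators. For monomial ideals in affine semigroup rings this is immediate, since $T$ is again such a ring; in the determinantal, symmetric, Pfaffian, and Hankel cases the representation-theoretic dictionary of \cite{deConcini80} and its analogues should supply it. Granting this, membership in either side of the formula becomes the same finite system of linear inequalities on the lattice-point data (pairs of partitions in the matrix cases), and I expect their equivalence to be a routine convexity argument — a point of the $w$-th dilation of the join is a convex combination whose two halves record exponents $\alpha$ and $w-\alpha$ with $0\leq\alpha\leq w$, and the rationality of $\alpha$ is forced by a small perturbation. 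This is the mechanism behind the monomial case of \cite{banerjee2023integral}.

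For arbitrary nonzero ideals over $\CC$ I would settle for a shifted version. From the Musta\c{t}\u{a}-Takagi summation formula for multiplier ideals,
\[
\mathcal{J}_T\big((IT+JT)^w\big)=\sum_{0\leq\alpha\leq w}\mathcal{J}_R(I^\alpha)\,\mathcal{J}_S(J^{w-\alpha})\,T,
\]
together with the two-sided comparison $\overline{\mathfrak{a}^t}\subseteq\mathcal{J}(\mathfrak{a}^t)\subseteq\overline{\mathfrak{a}^{t-e}}$ for $t\gg0$ — the left inclusion because relative canonical divisors on a log resolution are effective, the right one by Skoda's theorem, with $e$ depending only on the dimension of the ambient regular ring — I would feed these bounds into the two ends of the multiplier-ideal formula and combine with the already-established ``$\supseteq$'' to obtain, for all sufficiently large $w$,
\[
\sum_{0\leq\alpha\leq w}\overline{I^\alpha}\,\overline{J^{w-\alpha}}\,T\;\subseteq\;\overline{(IT+JT)^w}\;\subseteq\;\sum_{0\leq\beta\leq w-N}\overline{I^\beta}\,\overline{J^{(w-N)-\beta}}\,T,
\]
with $N$ depending only on $\dim R$ and $\dim S$; this is the summation formula up to a bounded shift of the exponent.

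The hardest point, and the reason the conjecture stays open in general, is the step I can only carry out in special cases: controlling the Rees valuations of $IT+JT$ in terms of those of $I$ and $J$. A resolution of $IT+JT$ arises from $\mathrm{Bl}_I(\Spec R)\times\mathrm{Bl}_J(\Spec S)$ by further blowing up the codimension-two strata where the two families of exceptional divisors meet; the resulting ``mixed'' divisors, and the precise relation between their order valuations and the rational powers $\overline{I^\alpha},\overline{J^{w-\alpha}}$ on the right-hand side, are exactly what the polyhedral structure of the classes in \autoref{thmB}, or in characteristic zero the multiplier-ideal formalism with its unavoidable dimension-sized shift, makes tractable — and which otherwise remains out of reach.
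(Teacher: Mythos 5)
First, a point of framing: the statement you were asked to prove is \autoref{The_Question}, which the paper itself states only as a conjecture and does not prove in general; it is settled there only for the ideal classes of \autoref{thmB} (via \autoref{main_Rees_P} and \autoref{cor_invs_all}) and in a weakened asymptotic form over $\CC$ (\autoref{thm_asymp_main}). Your proposal rightly treats it this way. Your argument for the containment ``$\supseteq$'' is correct and essentially the paper's first step in the proof of \autoref{thm_asymp_main} (the paper routes it through flat base change, \autoref{base_change}, to get the equality $\overline{I^\alpha}T=\overline{(IT)^\alpha}$, but your persistence-plus-valuations argument yields the needed inclusion). Your sketch for the Rees-package classes also matches the paper's mechanism: the ``join'' polyhedron is $\conv(\Gamma,\Sigma)$ of \autoref{notation_last_sec}, its non-coordinate hyperplanes are the pairings $H_1\star H_2$ of \autoref{subdirect}, and the convexity identity you invoke is \autoref{Omega_decomp}. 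What you wave off as routine, however, is exactly the paper's key technical input: one must prove that the paired functionals $\gamma(J)v+v(I)\gamma$ are genuinely \emph{valuations} on $T=R\otimes_\kk S$ (\autoref{sums_vals}, \autoref{cor_Rees_val}), which the paper does by passing to associated graded rings and using that tensor products of domains over $\overline{\kk}$ are domains; this is where algebraic closedness (or a reduction to it) enters, and it is not handed to you by the representation-theoretic dictionary.

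For arbitrary ideals over $\CC$ your route diverges from the paper's and has a genuine gap. You propose to sandwich rational powers between multiplier ideals via $\overline{\mathfrak{a}^t}\subseteq\mathcal{J}(\mathfrak{a}^t)\subseteq\overline{\mathfrak{a}^{t-e}}$ and then apply the Musta\c{t}\u{a}--Takagi formula. But $R$ and $S$ are only assumed normal: multiplier ideals are not defined on an arbitrary normal variety without a $\QQ$-Gorenstein-type hypothesis, the relative canonical divisor of a resolution of a normal singularity need not be effective (so your justification of the left inclusion fails outside the smooth, or at least canonical, setting), and Musta\c{t}\u{a}'s summation formula is proved for smooth ambient varieties. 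The paper sidesteps all of this by defining rational powers of ideal sheaves directly on normal varieties (\autoref{def_int_clo}), proving the key containment for pairs of DVRs by hand via \autoref{thm_gen_regular} and \autoref{dvr}, and then globalizing with the K\"unneth formula and a Serre-vanishing argument (\autoref{vanishing}); this yields the upper bound $\overline{I^\tau}T+\overline{J^{w-\tau}}T$ for $w_0\ls\tau\ls w-w_0$ rather than your shifted sum. If you add smoothness hypotheses your multiplier-ideal argument could likely be repaired, but as written it does not apply under the conjecture's hypotheses.
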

We devote the  last two sections of the paper to tackle this conjecture. 
First, we note that the restriction on the field is partially justified  by  a related question (see  \autoref{question_rees_product} and \autoref{example.fields.algebraically.closed}). We also note that the use of two different $\kk$-algebras is essential in this conjecture, as the summation formula for rational powers does not hold in general if instead one considers two ideals in the same ring, see  \autoref{same.ring.no}. 


In \autoref{prev_main_package} we show that if two ideals $I\subset R$ and $J\subset S$ admit a Rees package, e.g. the invariant ideals in \autoref{thmB}, then $IT+JT$ does as well. Moreover, a Rees package for $IT+JT$ can be obtained by `gluing' those of $I$ and $J$. In particular, this shows that 
 the Rees valuations of $IT + JT$ are obtained  by pairing the ones of $I$ and $J$ (see \autoref{cor_Rees_val}). 
This result allows us to compute rational powers of more classes of ideals, see for instance \autoref{example_cool}. 

 In our next theorem, we use \autoref{prev_main_package}  to settle \autoref{The_Question} for ideals with a Rees package without restrictions on the field. 

\begin{headthm}[\autoref{main_Rees_P}, \autoref{cor_invs_all}]
	\label{thmC}
	Adopt the notation in \autoref{The_Question} with $\kk$ an arbitrary field.  The conclusion of \autoref{The_Question} holds if $I$ and $J$ admit a Rees package. In particular, this holds if $I$ and $J$ belong to any of the  classes of ideals in \autoref{thmB}, and not necessarily both from the same class. 
\end{headthm}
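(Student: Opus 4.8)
The plan is to isolate the combinatorial core as a statement about Rees packages and then invoke \autoref{thmB}. Concretely, \autoref{main_Rees_P} will assert that the conclusion of \autoref{The_Question} holds, over an arbitrary field $\kk$, as soon as both $I\subset R$ and $J\subset S$ admit a Rees package in the sense of \autoref{def_Rees_Package}, with no hypothesis relating the two packages; granting this, \autoref{cor_invs_all}, and hence \autoref{thmC}, follows at once from \autoref{thmB}, since the ``mixed'' situation — with $I$ and $J$ drawn from different classes — uses only the abstract package data, not the particular class. So the real task is the proof of \autoref{main_Rees_P}.

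Assume Rees packages for $I$ and $J$. These supply rational polyhedra $P_I\subset (L_I)_\RR$ and $P_J\subset (L_J)_\RR$, finite distinguished generating sets of the filtrations $\{\overline{I^w}\}$ and $\{\overline{J^w}\}$ indexed by the lattice points of the dilations $wP_I$ and $wP_J$, and finite lists of Rees valuations realized combinatorially as the supporting functionals of the bounded facets. The first main step is a polyhedral lemma identifying the Rees package of $IT+JT$ inside $T=R\otimes_\kk S$ as the \emph{join}: its polyhedron is
\[
P_{I+J}\ =\ \bigl\{(u,u')\in (L_I\oplus L_J)_\RR\ :\ u\in \lambda P_I,\ u'\in(1-\lambda)P_J\ \text{for some }\lambda\in[0,1]\bigr\},
\]
and its Rees valuations are precisely the nontrivial pairings $v_1\ast v_2$, where each $v_i$ ranges over the Rees valuations of the corresponding ideal together with a formal trivial contribution; this is the polyhedral avatar of \autoref{cor_Rees_val}. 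Proving it requires, on one hand, that the integral closures $\overline{(IT+JT)^n}$ be computed from the join polyhedron — which one gets by combining the combinatorial descriptions of $\overline{I^\bullet}$ and $\overline{J^\bullet}$ with the flatness of $T$ over $R$ and over $S$ and the resulting compatibility of integral closure with this base change — and, on the other hand, an identification of the Rees valuations of $IT+JT$ via its normalized blowup, whose exceptional structure is related to those of the blowups of $I$ and $J$.

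Granting the join description, the summation formula follows by a valuative argument. For the inclusion ``$\supseteq$'', given $f\in\overline{I^\alpha}$, $g\in\overline{J^{w-\alpha}}$ with $0\leq\alpha\leq w$ and any Rees valuation $v=v_1\ast v_2$ of $IT+JT$, one has
\[
v(fg)=v_1(f)+v_2(g)\ \geq\ \alpha\, v_1(I)+(w-\alpha)\, v_2(J)\ \geq\ w\,\min\{v_1(I),v_2(J)\}\ =\ w\, v(IT+JT),
\]
so $fg\in\overline{(IT+JT)^w}$ by the valuative criterion, and the whole (finite) sum lies in $\overline{(IT+JT)^w}$. For ``$\subseteq$'', the harder direction, the Rees package reduces us to a distinguished generator $h$ of $\overline{(IT+JT)^w}$ attached to a lattice point $(u,u')\in w\,P_{I+J}$. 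By the join description there is $\lambda\in[0,1]$, which may be chosen rational since $P_I$ and $P_J$ are rational polyhedra, with $u\in\lambda w\,P_I$ and $u'\in(1-\lambda)w\,P_J$; putting $\alpha:=\lambda w\in\QQ_{\geq 0}$, the package then exhibits $h$ — up to integral closure and up to the compatibility of the combinatorial multiplication with addition in the lattices — as a product of a distinguished generator of $\overline{I^{\alpha}}$, regarded in $T$, with one of $\overline{J^{w-\alpha}}$. Hence $h\in\overline{I^{\alpha}}\,\overline{J^{w-\alpha}}\,T$.

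I expect the main obstacle to be the join lemma together with this final factorization step: establishing that the Rees package of $IT+JT$ is exactly the join, and then turning a single lattice point of $w\,P_{I+J}$ into a bona fide factorization of the attached generator as a product over $R$ and over $S$. The delicate points are that generators indexed by lattice points multiply compatibly with lattice addition only after passing to integral closures; that the splitting parameter $\lambda$ must be produced uniformly and rationally, so that $\alpha$ and $w-\alpha$ are admissible exponents in the sum of \autoref{The_Question}; and that the base-change statement for integral closure along $R\to T$ and $S\to T$ must hold over an arbitrary field $\kk$ — which is available precisely because the combinatorial description furnished by the Rees package does not see the field, thereby sidestepping the phenomenon in \autoref{example.fields.algebraically.closed}.
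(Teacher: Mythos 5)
Your overall architecture matches the paper's exactly: \autoref{thmC} is reduced to an abstract statement (\autoref{main_Rees_P}) that the summation formula holds over any field whenever both ideals admit Rees packages, which is then combined with \autoref{thmB}; and the heart of \autoref{main_Rees_P} is indeed your ``join lemma'' that $IT+JT$ has a Rees package whose polyhedron is $\conv(\Gamma,\Sigma)$ (the paper's \autoref{prev_main_package} and \autoref{subdirect}) and whose Rees valuations are pairings of those of $I$ and $J$ (\autoref{cor_Rees_val}), after which the formula falls out of the decomposition $w\,\conv(\Gamma,\Sigma)=\bigcup_{\alpha}\left(\alpha\Gamma+(w-\alpha)\Sigma\right)$.

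The genuine gap is in the join lemma itself, which you correctly flag as the main obstacle but for which the route you sketch would not go through as stated. Flatness and base change ($\overline{I^\alpha}T=\overline{(IT)^\alpha}$, the paper's \autoref{base_change}) only give the easy inclusion $\sum_{\alpha}\overline{I^\alpha}\,\overline{J^{w-\alpha}}T\subseteq\overline{(IT+JT)^w}$ --- note this already disposes of your ``$\supseteq$'' direction with no knowledge of $\RV(IT+JT)$. The hard direction is that $\overline{(IT+JT)^n}$ is no larger than the span of the lattice points of $n\conv(\Gamma,\Sigma)$, and for this one must know that this span is already integrally closed; equivalently, that for each non-coordinate supporting hyperplane $H_1\star H_2$ of the join, the paired functional is an honest valuation on $T=R\otimes_\kk S$. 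Trying to extract this from the normalized blowup of $IT+JT$ is essentially circular; the paper instead proves it algebraically (\autoref{sums_vals}) by passing to the associated graded rings of $v$ and $\gamma$, which are domains, and using that their tensor product over $\kk$ is a domain --- after first reducing to $\kk=\overline{\kk}$, which is exactly where the field-independence you invoke at the end actually gets justified. With that in hand, \autoref{prop_min_vals} identifies these pairings as $\RV(IT+JT)$ and \autoref{vals_are_intc} gives integral closedness of the span. A smaller slip: the pairing $v_1\ast v_2$ is not the unweighted sum but $v_2(J)v_1+v_1(I)v_2$, so $v(fg)=v_2(J)v_1(f)+v_1(I)v_2(g)$ and $v(IT+JT)=v_1(I)v_2(J)$; your inequality still holds with these weights, and no $\min$ is needed.
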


In the following 
theorem, we provide further evidence for \autoref{The_Question}. Here we prove an inclusion for arbitrary ideals and $w\gg 0$. This result  can be seen as a weaker version of \autoref{The_Question} since we always have
$$
\sum_{\stackrel{0\ls \tau\ls w,}{{\scriptscriptstyle \tau\in \QQ}}}\overline{I^\tau}\,\overline{J^{w-\tau}}\,T
= \bigcap_{\stackrel{0\ls \tau\ls w,}{{\scriptscriptstyle \tau\in \QQ}}}\left(	\overline{I^\tau}T + \overline{J^{w-\tau}}T\right),$$
see \autoref{remark_weaker}.  

\begin{headthm}[\autoref{thm_asymp_main}]
	\label{thmA}
	Following the notation in \autoref{The_Question}, 
there exists  $w_0\in \QQ_{>0}$ such that if $w_0\ls \tau\ls w-w_0$, $\tau\in \QQ_{\geqslant 0}$ then 
	 $$\sum_{\stackrel{0\ls \alpha\ls w,}{{\scriptscriptstyle \alpha\in \QQ}}}\overline{I^\alpha}\,\overline{J^{w-\alpha}}\,T
	 \subseteq
	 \overline{\left(IT+JT \right)^w}
	 \subseteq
	 \overline{I^\tau}T+\overline{J^{w-\tau}}T.
	 $$
\end{headthm}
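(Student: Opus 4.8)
The plan is to establish the two inclusions separately. The left inclusion $\sum_{0\le\alpha\le w}\overline{I^\alpha}\,\overline{J^{w-\alpha}}\,T\subseteq\overline{(IT+JT)^w}$ is the easy and unconditional one: for any valuation $v$ on $\Quot(T)$ nonnegative on $T$ and any $\alpha$ with $0\le\alpha\le w$, one has $v(x y)=v(x)+v(y)\ge \alpha\, v(IT)+(w-\alpha)\,v(JT)$ for $x\in\overline{I^\alpha}T$, $y\in\overline{J^{w-\alpha}}T$, using the valuative characterizations of $\overline{I^\alpha}$ and $\overline{J^{w-\alpha}}$ pulled back along $R\to T$ and $S\to T$; since $v(IT+JT)\ge\min\{v(IT),v(JT)\}$ we get $v(xy)\ge w\,v(IT+JT)$. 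Summing and intersecting over all such $v$ gives the inclusion. (Alternatively, one can cite \autoref{The_Question}'s known ``$\supseteq$'' direction or the corresponding statement in \cite{banerjee2023integral}.)

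The substance of the theorem is the right inclusion $\overline{(IT+JT)^w}\subseteq\overline{I^\tau}T+\overline{J^{w-\tau}}T$ for $w_0\le\tau\le w-w_0$. The strategy I would pursue is a Rees-valuation / reduction argument combined with a uniform Artin–Rees or Brian\c{c}on–Skoda-type bound coming from multiplier ideals over $\CC$. First, pick a minimal reduction $\fa\subseteq IT+JT$ with the same integral closure; since $IT+JT$ is generated by (generators of) $I$ together with (generators of) $J$, and integral closure is insensitive to reduction, we may analyze $\overline{\fa^w}=\overline{(IT+JT)^w}$. The key input is that $\overline{I^\tau}=\mathcal{J}(I^\tau)$ for $\tau\gg0$ up to a fixed shift — more precisely, by the Brian\c{c}on–Skoda theorem for multiplier ideals (\cite{Lipman94}, \cite[Chapter 9]{LAZARSFELD}) there is an integer $\ell$ (controlled by the number of generators, hence independent of $\tau$) with $\overline{I^{\tau}}\subseteq\mathcal{J}(I^{\tau-\ell})$ and $\mathcal{J}(I^\tau)\subseteq\overline{I^\tau}$ when $\tau-\lceil\tau\rceil$ stays bounded. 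Then I would invoke the Musta\c{t}\u{a}–Takagi summation formula for multiplier ideals of the pair on $T=R\otimes_\CC S$: $\mathcal{J}((IT+JT)^w)=\sum_{\alpha+\beta=w}\mathcal{J}(I^\alpha)\mathcal{J}(J^\beta)T\subseteq\mathcal{J}(I^\tau)\mathcal{J}(J^{w-\tau})T\subseteq\overline{I^\tau}T+\overline{J^{w-\tau}}T$ for the chosen $\tau$ (the last step just drops one factor from each product in the sum and bounds the single term $\alpha=\tau$). Chaining: $\overline{(IT+JT)^w}\subseteq\mathcal{J}((IT+JT)^{w-w_0})\cdot(\text{correction})\subseteq\dots\subseteq\overline{I^\tau}T+\overline{J^{w-\tau}}T$, where $w_0$ absorbs the Brian\c{c}on–Skoda shift on both $R$ and $S$. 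So $w_0$ should be taken to be (roughly) the maximum of the dimensions of $R$ and $S$, or more conservatively the sum, plus one.

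The main obstacle I anticipate is matching the rational parameter $w$ against the integer shifts that appear in Brian\c{c}on–Skoda: $\overline{I^\tau}$ for genuinely rational $\tau$ is $\overline{I^{\lceil\tau\rceil}}:\mathrm{(something)}$ only after passing to the Rees algebra / normalized blowup, so one has to be careful that the multiplier-ideal summation formula — stated for real exponents in \cite{takagi2006formulas, Jow_Miller08} — is applied with the exact rational $w$, not its ceiling, and that the containments $\mathcal{J}(I^\tau)\subseteq\overline{I^\tau}\subseteq\mathcal{J}(I^{\tau-w_0})$ hold with a single $w_0$ valid for all the exponents showing up in the telescoping. This is where one genuinely uses $\kk=\CC$ (resolution of singularities, existence of multiplier ideals). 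A secondary point to check is that $T=R\otimes_\CC S$ is again a normal $\CC$-variety of finite type so that multiplier ideals of the pair $(T, IT+JT)$ make sense and the summation formula applies on the product — this is standard since a product of normal varieties over an algebraically closed field is normal. Finally, I would record that the right inclusion as stated only needs a \emph{single} good $\tau$, which is why the hypothesis is an interval $[w_0, w-w_0]$ rather than a claim for all $\tau$; choosing $\tau$ in the middle keeps both $\overline{I^\tau}$ and $\overline{J^{w-\tau}}$ in the range where the Brian\c{c}on–Skoda comparison is available.
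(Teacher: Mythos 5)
Your argument for the left-hand inclusion is fine; it is equivalent to the paper's use of persistence/base change of integral closure along $R\to T$ and $S\to T$. The right-hand inclusion, however, has a genuine gap, located exactly where you flag your ``main obstacle''. First, the containment $\mathcal{J}(I^\tau)\subseteq\overline{I^\tau}$ that your chain needs goes the wrong way: on a smooth variety $\overline{\mathfrak{a}^\lambda}=\mu_*\O(-\lceil\lambda F\rceil)\subseteq\mu_*\O(K_{X'/X}-\lfloor\lambda F\rfloor)=\mathcal{J}(\mathfrak{a}^\lambda)$, so the multiplier ideal \emph{contains} the rational power. The usable reverse containment is $\mathcal{J}(\mathfrak{a}^{\lambda+c})\subseteq\overline{\mathfrak{a}^{\lambda}}$ with $c$ controlled by the discrepancies of a log resolution. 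Running your chain with the corrected containments, a term $\mathcal{J}(I^\alpha)\,\mathcal{J}(J^{w-\alpha})$ of the Musta\c{t}\u{a}--Takagi sum lands in $\overline{I^\tau}T+\overline{J^{w-\tau}}T$ only when $\alpha\ge\tau+c_I$ or $\alpha\le\tau-c_J$; the terms with $\tau-c_J<\alpha<\tau+c_I$ are not captured, and the best conclusion available is $\overline{(IT+JT)^w}\subseteq\overline{I^{\tau}}T+\overline{J^{w-\tau-c_I-c_J}}T$, whose exponents sum to $w-c_I-c_J<w$. The stated inclusion, with exponents summing to exactly $w$, is strictly sharper and is not reachable by absorbing the loss into $w_0$ (which only constrains where $\tau$ lives, not the exponent budget). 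Second, multiplier ideals and the summation formula require $X$ smooth (or at least $\QQ$-Gorenstein with a log resolution), whereas $R$ and $S$ are only assumed to be normal domains.

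The paper avoids multiplier ideals entirely. It passes to $Y^+=X_1^+\times X_2^+$, the product of the normalized blowups, where the two pulled-back ideals become locally principal; localizing at the associated primes (which have codimension one) reduces the containment to ideals generated by monomials in a generalized regular system of parameters inside a tensor product of two divisorial valuation rings, where a Newton-polyhedron computation (\autoref{thm_gen_regular}, \autoref{dvr}) gives the \emph{exact} inclusion $\overline{(I+J)^w}\subseteq\overline{I^\tau}+\overline{J^{w-\tau}}$ for every $0\le\tau\le w$ with no loss of exponent. The threshold $w_0$ then arises only from descending this to $Y$: one needs $f_*$ to commute with the sum of the two ideal sheaves, which reduces via K\"unneth to the vanishing of $R^1v_{i*}$ of large rational powers, supplied by Serre vanishing together with the finite generation of the extended Rees algebra (\autoref{vanishing}). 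So $w_0$ is a cohomological threshold, not a Brian\c{c}on--Skoda shift, and that is what lets the exponents add up to exactly $w$.
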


The proof of \autoref{thmA} is  influenced  by that of \cite[Theorem 0.3]{mustacta2002multiplier}.  The first step is to define rational powers of ideal sheaves in  normal  varieties inspired by \cite[D\'efinition 4.2.2]{lejeune2008cloture} and \cite[Definition 9.6.2]{LAZARSFELD}, see \autoref{def_int_clo}. 
Next, we prove the statement for  ideals in discrete valuation rings, see \autoref{dvr}. 
Next, a careful use of  Serre's vanishing theorem gives us a  desired  vanishing result, see \autoref{vanishing}.  
Finally, we put it all together in \autopageref{proof_thm_asymp}.

We now outline the content of the paper. \autoref{section_rat_powers} includes general details about rational powers and Rees valuations,  as well as  the definition of rational powers of ideals sheaves.     \autoref{sec_Rees_Rat} contains the definition of Rees package and the proof of \autoref{thmB}.  \autoref{sec_package_summ} includes  the proof of \autoref{thmC}. This section also includes examples where our results are applied.
\autoref{sec_asymp_form} is devoted to the  proof of \autoref{thmA}.

\medskip

\begin{center}
 {\it Acknowledgments}
\end{center}

The authors are grateful to Robert Lazarsfeld for several 
enlightening insights about multiplier 
ideals, integral closures, and vanishing theorems.
We also thank Claudiu Raicu for helpful correspondence related to the proof  of \autoref{gen_det_rees} and Bernard Teissier for pointing out \autoref{rem_NND}. The fourth author thanks Alessandro De Stefani and Alexandra Seceleanu for discussions related to \autoref{example_subs_arr}. The third author  was partially supported by the Simons Foundation. The fourth author was partially funded by NSF Grant DMS \#2001645/2303605 and DMS \#2401522. We acknowledge support by the NSF Grant  DMS \#1928930, while the second and fourth authors were in residence at the Simons Laufer Mathematical Science Institute (formerly MSRI) in Berkeley, California, during the Spring 2024 semester.

\section{Rational powers and Rees valuations}\label{section_rat_powers}
In this section 
 we review and extend some relevant facts about 
 rational powers and Rees valuations.  
 Our main references in this subject is \cite{huneke2006integral}. 
We note that several of the statements in this section have been previously established  for the integral closures of powers of ideals. 
Our goal is to show that these properties continue to hold in the more general setting of rational powers and to provide    references for the corresponding assertions. 
We restrict our exposition to the following setup where all of our results occur.

\begin{setup}\label{setup_Back_rat}
Let $(R, \fm, \kk)$ be a Noetherian local ring, or positively  graded over $\kk$ with $\fm=R_{>0}:=\oplus_{n\in \ZZz} R_n$. We also assume $R$ is an integral domain and denote by $\Quot(R)$ its field of fractions. We set $I\subset R$ to be an ideal.
\end{setup}

\begin{definition}[Rational powers]\label{def_int_clo_rati}
Adopt \autoref{setup_Back_rat}.	
An element $x\in R$ is said to be {\it integral over $I$} if there exists $n\in \NN$ and  $a_i \in I^i$ such that $x^n+a_1x^{n-1}+a_2x^{n-2}+\cdots+a_n=0$.
	The set of all such element is an ideal denoted by $\overline{I}$ and called the {\it integral closure of $I$}.  Let $w=\frac{p}{q}$ be a nonnegative rational number with $p\in \NN$ and $q\in \ZZz$. The {\it $w$-th rational power of $I$} is the ideal defined by
	$$\overline{I^w}:=\{x\in R\mid x^q\in \overline{I^p}\}.$$
\end{definition}

\begin{remark}
It follows from the definitions that $\overline{I^w}$ is well-defined, i.e., if $\frac{p}{q}=\frac{p'}{q'}$ are two representations of $w$, then $\{x\in R\mid r^q\in \overline{I^p}\}=\{x\in R\mid r^{q'}\in \overline{I^{p'}}\}$. Furthermore, $\overline{I^0}=R$ and if $n\in \NN$ then $\overline{I^{\frac{n}{1}}}=\overline{I^n}$.
\end{remark}

In the following proposition we show that rational powers localize.

\begin{proposition}\label{localize}
Adopt \autoref{setup_Back_rat}. For every multiplicative subset $W\subset R$ and rational number $w\in \QQez$ we have $W^{-1}\overline{I^w}=\overline{(W^{-1}I)^w}$.
\end{proposition}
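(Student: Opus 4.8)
The plan is to reduce everything to the corresponding statement for ordinary integral closure, which commutes with localization by a standard result (see \cite[Proposition 1.1.4]{huneke2006integral}): for any ideal $\fa\subset R$ and any multiplicative set $W$, one has $W^{-1}\overline{\fa}=\overline{W^{-1}\fa}$. Write $w=\frac{p}{q}$ with $p\in\NN$, $q\in\ZZz$, so that by definition $\overline{I^w}=\{x\in R\mid x^q\in\overline{I^p}\}$ and likewise $\overline{(W^{-1}I)^w}=\{y\in W^{-1}R\mid y^q\in\overline{(W^{-1}I)^p}\}=\{y\in W^{-1}R\mid y^q\in\overline{W^{-1}(I^p)}\}=\{y\in W^{-1}R\mid y^q\in W^{-1}\overline{I^p}\}$, the last equality being the cited fact applied to $\fa=I^p$.

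First I would prove the inclusion $W^{-1}\overline{I^w}\subseteq\overline{(W^{-1}I)^w}$. Take a generator $\frac{x}{s}$ with $x\in\overline{I^w}$ and $s\in W$; then $x^q\in\overline{I^p}$, hence $\left(\frac{x}{s}\right)^q=\frac{x^q}{s^q}\in W^{-1}\overline{I^p}$, and since $s^q\in W$ this lies in $W^{-1}\overline{I^p}=\overline{W^{-1}(I^p)}=\overline{(W^{-1}I)^p}$. Thus $\frac{x}{s}\in\overline{(W^{-1}I)^w}$.

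For the reverse inclusion, take $\frac{x}{s}\in\overline{(W^{-1}I)^w}$ with $x\in R$, $s\in W$. Then $\frac{x^q}{s^q}\in\overline{(W^{-1}I)^p}=W^{-1}\overline{I^p}$, so there exist $a\in\overline{I^p}$ and $t\in W$ with $\frac{x^q}{s^q}=\frac{a}{t}$ in $W^{-1}R$; hence there is $u\in W$ with $u(t x^q-s^q a)=0$ in $R$, i.e. $(ut)x^q=(us^q)a\in\overline{I^p}$. Now I want to conclude that a suitable element of $R$ whose image is $\frac{x}{s}$ lies in $\overline{I^w}$. Multiply through to clear the unit: since $t x^q$ differs from $s^q a$ by a $W$-torsion element, $(tut')^{?}$\,-type bookkeeping shows that $(ut)^{q}\cdot$ (something) works; concretely, set $x'=u t x$ (or more carefully $x' = (ut) x$ after raising to a power so exponents match — one checks $(ut x)^q = (ut)^q x^q$ and $(ut)^{q-1}(ut)x^q \in \overline{I^p}$, hence after absorbing the unit-free factor $(ut)^{?}$ into $\overline{I^p}$ using $\overline{I^p}\cdot R\subseteq\overline{I^p}$... ) — the point is that $x' = ut\cdot x$ satisfies $(x')^q\in \overline{I^p}$ because $(ut)^{q}x^q = (ut)^{q-1}\cdot(ut x^q)$ and $ut x^q\in\overline{I^p}$ up to a further $W$-multiple, and $\overline{I^p}$ is an ideal of $R$. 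Therefore $x'\in\overline{I^w}$, and $\frac{x'}{uts}=\frac{x}{s}$ in $W^{-1}R$, proving $\frac{x}{s}\in W^{-1}\overline{I^w}$.

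The only mildly delicate point is this last bookkeeping with $W$-torsion: the equation $\frac{x^q}{s^q}\in W^{-1}\overline{I^p}$ only gives an identity after multiplying by some $u\in W$, so one must track that the element of $R$ one ultimately exhibits in $\overline{I^w}$ has the correct image. This is entirely routine once one uses that $\overline{I^p}$ is an ideal (closed under multiplication by elements of $R$, in particular by elements of $W$) together with the commutation of integral closure with localization; there is no real obstacle, just care with denominators.
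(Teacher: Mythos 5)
Your proposal is correct and follows essentially the same route as the paper: reduce to the integer case $W^{-1}\overline{I^p}=\overline{(W^{-1}I)^p}$ (the paper cites \cite[Proposition 10.5.5]{huneke2006integral}), push the $q$-th power through for one inclusion, and for the reverse inclusion clear denominators by an element $a\in W$ so that $ay^q\in\overline{I^p}$ forces $(ay)^q\in\overline{I^p}$ because $\overline{I^p}$ is an ideal. The hedged ``bookkeeping'' in your last paragraph does in fact close up cleanly (and since $R$ is a domain in \autoref{setup_Back_rat}, the torsion element $u$ is not even needed), so there is no gap.
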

\begin{proof}
If $w$ is an integer, the statement follows from \cite[Proposition 10.5.5]{huneke2006integral}. Now let $w$ be arbitrary and write $w=\frac{p}{q}$. If $x\in \overline{I^w}$, then $(W^{-1}x)^q=W^{-1}x^q\in W^{-1}\overline{I^p}=\overline{(W^{-1}I)^p}$ and so $W^{-1}x\in \overline{(W^{-1}I)^w}$. On the other hand, if $y\in \overline{(W^{-1}I)^w}$, then $y^q\in\overline{(W^{-1}I)^p}=W^{-1}\overline{I^p}$. Therefore there exists $a\in W$ such that $ay^q\in \overline{I^p}$. The latter  implies $(ay)^q\in \overline{I^p}$ and so $ay\in\overline{I^w}$. Thus $y\in W^{-1}\overline{I^w}$, finishing the proof.
\end{proof}

A valuation is {\it discrete} if it is integer valued. Integral closures and rational powers can also be defined via  discrete valuations: by \cite[Theorem 6.8.3(3)]{huneke2006integral} we have that $x\in \overline{I}$ if and only if $x\in IV$ for every discrete valuation ring  $R\subseteq V\subseteq \Quot(R)$. Moreover, the latter condition can be verified by using finitely many valuations,  the Rees valuations of $I$.

\begin{definition}[Rees valuations]\label{def.Rees.val}
Adopt \autoref{setup_Back_rat}. Let $I=(a_1, \dots, a_d)$ be a set of generators and,  for each $i=1, \dots, d$, set $R_i=R\left[ \frac{a_1}{a_i},\ldots, \frac{a_d}{a_i}\right]$ and $\overline{R_i}$  its   integral closure in $\Quot(R_i)=\Quot(R)$. Let $\RV(I)$ be the set of distinct  discrete valuation rings arising as $V=\left( \overline{R_i}\right)_\frp$, where $\frp$ varies over the minimal primes of $I\overline{R_i}=a_i\overline{R_i}$. The set $\RV(I)$ is called the {\it Rees valuations rings of $I$}. We will often abuse notation and identify each  $V\in \RV(I)$ with any valuation $v$ which is a positive integer multiple  of the order valuation of $V$.
\end{definition}

We say that a discrete valuation $v$ of $\Quot(R)$  is {\it centered in $R$}, or simply that $v$ is a {\it valuation of $R$},  if $v(x)\gs 0$ for every $x\in R$. If $v$ is a valuation of $R$, then set $v(I)=\min\{v(x)\mid x\in I\}$.

\begin{remark}\label{discrete_local}
Let  $\RV(I)=\{v_1, \dots, v_r\}$ and  set $e:=\lcm(v_1(I), \dots, v_r(I))$ (or any multiple of it). For every $w\in \QQez$ and  $n:= \lceil we\rceil$ we have  $\overline{I^w}=\overline{I^{\frac{n}{e}}}$ (see \cite[Proposition 10.5.5]{huneke2006integral}). Moreover, $\overline{I^{\frac{1}{e}}}=\sqrt{I}$ \cite[Example 10.5.4]{huneke2006integral}.
\end{remark}

As mentioned above, Rees valuations can be used to define integral closures.

	\begin{theorem}[{\cite[Theorem 10.2.2]{huneke2006integral}}]\label{thm_Rees_val}
Adopt \autoref{setup_Back_rat} and the notation from \autoref{def.Rees.val}. For every $n\in \NN$ we have
$$\overline{I^n}=\cap_{i=1}^d \left(I^n \overline{R_i} \cap R\right)= \cap_{i=1}^d \left(a_i^n\overline{R_i}\cap R\right)
=
\cap_{V\in \RV(I)} \left(I^nV\cap R\right)= \{x\in R\mid v(x)\gs nv(I), \forall v\in \RV(I)\},$$
and no $V$ or $v$ can be  omitted without breaking some of  these equalities.
	\end{theorem}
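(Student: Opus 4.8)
The plan is to deduce the entire chain of equalities from the valuative criterion for integral closure, and to treat the irredundancy (``no $V$ can be omitted'') as a separate, genuinely geometric argument, which I expect to be the main obstacle.

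\emph{The equalities.} By \cite[Theorem 6.8.3(3)]{huneke2006integral}, for $x\in R$ one has $x\in\overline{I^n}$ if and only if $v(x)\gs nv(I)$ for every discrete valuation $v$ of $R$ (equivalently $x\in I^nV$ for every DVR $V$ with $R\subseteq V\subseteq\Quot(R)$). For ``$\overline{I^n}\supseteq\bigcap_i(I^n\overline{R_i}\cap R)$'': given $x$ in the right-hand side and any DVR $V$ of $R$, pick $i$ with $v(a_i)=\min_j v(a_j)=v(I)$; then $a_j/a_i\in V$ for all $j$, so $R_i\subseteq V$, hence $\overline{R_i}\subseteq V$ since $V$ is integrally closed, and therefore $v(x)\gs v(I^n\overline{R_i})=nv(I)$. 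For ``$\overline{I^n}\subseteq\bigcap_i(I^n\overline{R_i}\cap R)$'': the principal ideal $a_i^n\overline{R_i}=I^n\overline{R_i}$ (equality since $IR_i=a_iR_i$) is integrally closed in the normal Noetherian domain $\overline{R_i}$, hence equals $\bigcap_{\frp}\big(a_i^n(\overline{R_i})_\frp\cap\overline{R_i}\big)$ over the height-one primes $\frp$; each $(\overline{R_i})_\frp$ is a DVR containing $R$, so $x\in\overline{I^n}$ forces $x\in a_i^n(\overline{R_i})_\frp$ for all such $\frp$, hence $x\in I^n\overline{R_i}$. This gives the first two equalities. For the third, the primary decomposition of the principal ideal $a_i^n\overline{R_i}$ in the Krull domain $\overline{R_i}$ is supported exactly on $\Min(a_i\overline{R_i})$, and $a_i^n(\overline{R_i})_\frp=I^n(\overline{R_i})_\frp$; intersecting over all $i$ and over $\frp\in\Min(a_i\overline{R_i})$ recovers $\bigcap_{V\in\RV(I)}(I^nV\cap R)$ by \autoref{def.Rees.val}. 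The last equality merely unwinds $I^nV\cap R=\{x\in R\mid v(x)\gs nv(I)\}$.

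\emph{Irredundancy.} Fix $V_j\in\RV(I)$, and write $v_k$ for the order valuation of $V_k$, $k=1,\dots,r$. It is enough to find $n\in\NN$ and $x\in R\setminus\{0\}$ with $v_j(x)<nv_j(I)$ and $v_k(x)\gs nv_k(I)$ for all $k\neq j$, for then $x\in\bigcap_{k\neq j}(I^nV_k\cap R)$ while $x\notin\overline{I^n}$. Replacing a candidate $x$ by a high power and choosing $n$ in a suitable interval, it suffices to produce $x\in R\setminus\{0\}$ with $v_j(x)/v_j(I)<v_k(x)/v_k(I)$ for all $k\neq j$. Such an $x$ is obtained on the normalized blowup $\overline{X}=\Proj\big(\overline{R[It]}\big)$: its exceptional Cartier divisor is $E=\sum_k v_k(I)E_k$ with $E_1,\dots,E_r$ the distinct prime exceptional divisors (their generic points being the height-one primes of the $\overline{R_i}$ over $I\overline{R_i}$, i.e. the centers of the $V_k$), $I\O_{\overline X}=\O_{\overline X}(-E)$ is relatively ample over $\Spec R$, and $H^0(\overline X,\O_{\overline X}(-nE))=\overline{I^n}$ for $n\gs 0$ by the equalities above. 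For $n\gg 0$, relative Serre vanishing gives $H^1(\overline X,\O_{\overline X}(-nE))=0$, so from $0\to\O(-nE)\to\O(-nE+E_j)\to\O(-nE+E_j)|_{E_j}\to 0$ we obtain $H^0(\O(-nE+E_j))\supsetneq H^0(\O(-nE))$ as soon as $\O(-nE+E_j)|_{E_j}$ has a nonzero section, which again holds for $n\gg 0$. Since $-nE+E_j\ls 0$ we have $\O(-nE+E_j)\subseteq\O_{\overline X}$ and hence $H^0(\O(-nE+E_j))\subseteq H^0(\O_{\overline X})=R$; any $x\in H^0(\O(-nE+E_j))\setminus H^0(\O(-nE))$ then satisfies $v_k(x)\gs nv_k(I)$ for $k\neq j$ and $v_j(x)=nv_j(I)-1<nv_j(I)$, as required. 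The analogous irredundancy for the $\overline{R_i}$ (equivalently the $a_i$) is proved the same way, using that each $V_k$ is the order valuation of some $(\overline{R_i})_\frp$.

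\emph{Main obstacle.} Everything except the irredundancy is a formal consequence of the valuative criterion and of the Krull-domain decomposition of principal ideals. The substance is the construction of the separating element $x$, where projectivity of the blowup and a vanishing theorem are genuinely needed; the delicate points are the identification $H^0(\overline X,\O_{\overline X})=R$ (from $R$ normal and $\overline X\to\Spec R$ proper birational) and the two positivity statements for $n\gg 0$ along the exceptional divisor.
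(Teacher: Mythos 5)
The paper does not prove this statement; it is quoted verbatim from Huneke--Swanson \cite[Theorem 10.2.2]{huneke2006integral}, so the relevant comparison is with their proof. Your argument for the chain of equalities is correct and is essentially the standard one: the valuative criterion gives $\supseteq$ (choosing $i$ with $v(a_i)=v(I)$ so that $\overline{R_i}\subseteq V$), and the Mori--Nagata/Krull-domain decomposition of the principal ideal $a_i^n\overline{R_i}$ over the height-one primes of $\Min(a_i\overline{R_i})$ gives $\subseteq$ and the identification with $\RV(I)$. (One nitpick: $\overline{R_i}$ is a Krull domain but need not be Noetherian in the generality of \autoref{setup_Back_rat}; you in fact only use the Krull property, so this is cosmetic.)

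The irredundancy argument is where you depart from the algebraic route of \cite{huneke2006integral}, and as written it has genuine gaps. First, your separating element lives in $H^0(\overline X,\O_{\overline X}(-nE+E_j))\subseteq H^0(\overline X,\O_{\overline X})$, and you identify the latter with $R$ ``from $R$ normal''--- but \autoref{setup_Back_rat} does not assume $R$ normal, so in general $H^0(\overline X,\O_{\overline X})=\bigcap_i\overline{R_i}$ can be strictly larger than $R$ and your $x$ need not lie in $R$; the same issue affects your claim $H^0(\O_{\overline X}(-nE))=\overline{I^n}$ rather than $H^0(\O_{\overline X}(-nE))\cap R=\overline{I^n}$. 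This is patchable (multiply by an element $c$ with $c\,H^0(\O_{\overline X})\subseteq R$ and rerun your powering/ratio reduction, since the defect $v_j(c)$ is absorbed as $m\to\infty$), but the patch is needed and absent. Second, the exact sequence $0\to\O(-nE)\to\O(-nE+E_j)\to\O(-nE+E_j)|_{E_j}\to 0$ presumes $E_j$ is Cartier; on the normalized blowup only $E$ itself is Cartier, the individual components $E_j$ generally are not, so $\O(-nE+E_j)$ must be read as a divisorial (reflexive) sheaf and the cokernel is merely a coherent sheaf supported on $E_j$ with nonzero stalk at its generic point --- the nonvanishing of its sections for $n\gg0$ still follows from relative global generation, but not from the sequence as you wrote it. Third, the whole cohomological apparatus (properness of $\overline X\to\Spec R$, coherence of pushforwards, Serre vanishing) requires the normalizations $\overline{R_i}$ to be module-finite over $R_i$, i.e.\ a Nagata/excellence hypothesis that \autoref{setup_Back_rat} does not impose; the algebraic proof of irredundancy in \cite{huneke2006integral} avoids this entirely. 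In the settings where this paper actually applies the theorem these hypotheses hold, but your proof as stated does not cover the stated generality.
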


The following lemma shows that  rational powers can be defined via the same valuations defining integral closures.

\begin{proposition}\label{lemma_same_vals}
Adopt \autoref{setup_Back_rat} and the notation from \autoref{def.Rees.val}. Let $\mathscr{T}$ be a set of   discrete   valuations of $R$. Then  $\overline{I^n} =\{x\in R\mid v(x)\gs nv(I), \forall v\in \mathscr{T}\} $ for every $n \in \NN$ if and only if  $\overline{I^w} =\{x\in R\mid v(x)\gs wv(I), \forall v\in \mathscr{T}\} $
for every $w \in \QQez$.
In particular, for every $w\in \QQez$, we have
$$\overline{I^w}=
\{x\in R\mid v(x)\gs wv(I), \forall v\in \RV(I)\},$$
and no 
$v$ can be omitted without breaking some of  these equalities.
\end{proposition}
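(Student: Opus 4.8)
The plan is to prove the equivalence by reducing the rational-power statement to the integral-closure statement via the very definition of rational powers, and then to deduce the "in particular" clause by combining with \autoref{thm_Rees_val} and \autoref{discrete_local}. First I would prove the backward direction, which is essentially trivial: if $\overline{I^w}=\{x\in R\mid v(x)\gs wv(I),\ \forall v\in \mathscr{T}\}$ holds for every $w\in\QQez$, then in particular it holds for every $w=n\in\NN$, which is exactly the desired statement for integers. For the forward direction, assume $\overline{I^n}=\{x\in R\mid v(x)\gs nv(I),\ \forall v\in\mathscr{T}\}$ for all $n\in\NN$, fix $w\in\QQez$ and write $w=\frac{p}{q}$ with $p\in\NN$, $q\in\ZZz$. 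By \autoref{def_int_clo_rati}, $x\in\overline{I^w}$ iff $x^q\in\overline{I^p}$, which by hypothesis holds iff $v(x^q)\gs pv(I)$ for all $v\in\mathscr{T}$, i.e. $qv(x)\gs pv(I)$, i.e. $v(x)\gs\frac{p}{q}v(I)=wv(I)$ for all $v\in\mathscr{T}$. This chain of equivalences gives the claim; the only tiny point to note is that $v$ being a valuation means $v(x^q)=qv(x)$.

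Next, to obtain the "in particular" statement, I would take $\mathscr{T}=\RV(I)$. By \autoref{thm_Rees_val}, the hypothesis $\overline{I^n}=\{x\in R\mid v(x)\gs nv(I),\ \forall v\in\RV(I)\}$ holds for every $n\in\NN$, so the equivalence just proved yields $\overline{I^w}=\{x\in R\mid v(x)\gs wv(I),\ \forall v\in\RV(I)\}$ for every $w\in\QQez$.

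Finally, for the minimality assertion — that no $v\in\RV(I)$ can be dropped — I would argue as follows. \autoref{thm_Rees_val} already tells us that for the integer-power description, no $v\in\RV(I)$ is redundant; that is, for each $v_0\in\RV(I)$ there exists $n\in\NN$ and $x\in R$ with $v(x)\gs nv(I)$ for all $v\in\RV(I)\setminus\{v_0\}$ but $v_0(x)<nv_0(I)$, so $x\notin\overline{I^n}$. Since $\overline{I^n}=\overline{I^{n/1}}$ is itself a rational power, this same $x$ witnesses that the rational-power description fails if $v_0$ is omitted: $x$ satisfies $v(x)\gs (n/1)v(I)$ for all remaining $v$ but $x\notin\overline{I^{n/1}}$. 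Hence no $v$ can be omitted. I expect the main (though still modest) obstacle to be bookkeeping the quantifiers cleanly in the forward direction and making sure the representation $w=p/q$ interacts correctly with the inequality $qv(x)\gs pv(I)$; everything else is a direct unwinding of definitions together with the already-cited results.
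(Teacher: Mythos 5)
Your proposal is correct and follows essentially the same route as the paper: the backward direction is immediate, the forward direction reduces $\overline{I^{p/q}}$ to $\overline{I^p}$ via the definition $x\in\overline{I^w}\iff x^q\in\overline{I^p}$ together with $v(x^q)=qv(x)$, and the "in particular" clause (including irredundancy) follows by combining with \autoref{thm_Rees_val} since integer powers are special cases of rational powers.
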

\begin{proof}
The backwards implication is trivial. For the forward implication, fix $w\in \QQez$ and write $w=\frac{p}{q}\in \QQez$ with $p\in \NN$ and $q\in \ZZz$. Since $x\in \overline{I^w}$  if and only if $x^q\in \overline{I^p}$, and this is the case if and only if $qv(x)\gs pv(I)$ for every $v\in \mathscr{T}$, the first statement follows. The second statement follows from the first one and \autoref{thm_Rees_val}.
\end{proof}

The previous two results lead to the following useful fact about  associated primes of rational powers.

\begin{proposition}\label{assoc_prime}
Adopt \autoref{setup_Back_rat}. For each $V\in \RV(I)$, set $\frp_V:=\{x\in R\mid v(x)>0\}$.  We have
$$\cup_{w\in \QQez}\Ass(R/\overline{I^w})=\cup_{n\in \NN}\Ass(R/\overline{I^n})=\{\frp_V\mid V\in \RV(I)\}.$$
\end{proposition}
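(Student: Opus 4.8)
The plan is to deduce the statement by combining \autoref{thm_Rees_val} and \autoref{lemma_same_vals} with the classical theory of associated primes of integral closures of powers, applied along the Rees valuation rings. The second equality, $\cup_{n\in\NN}\Ass(R/\overline{I^n})=\{\frp_V\mid V\in\RV(I)\}$, is well known (it is essentially \cite[Proposition 10.2.2 / 10.2.4]{huneke2006integral} or a consequence of the fact that the normalized Rees algebra $\bigoplus_n \overline{I^n}t^n$ has its relevant associated primes governed by the exceptional divisors of the normalized blowup); I would either cite this directly or sketch it via the primary decomposition $\overline{I^n}=\bigcap_{V\in\RV(I)}(I^nV\cap R)$ from \autoref{thm_Rees_val}, noting that for $n\gg 0$ the contraction $I^nV\cap R$ is $\frp_V$-primary and that each $\frp_V$ does occur (by the ``no $v$ can be omitted'' clause).

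The new content is the first equality, and since $\NN\subset\QQez$ gives $\cup_{n\in\NN}\Ass(R/\overline{I^n})\subseteq\cup_{w\in\QQez}\Ass(R/\overline{I^w})$ automatically, the point is the reverse inclusion: every prime associated to some $\overline{I^w}$, $w\in\QQez$, is already associated to some $\overline{I^n}$, $n\in\NN$. Here I would use \autoref{discrete_local}: with $e=\lcm(v_1(I),\dots,v_r(I))$, every rational power equals $\overline{I^{m/e}}$ for some $m\in\NN$, so it suffices to handle $w=m/e$. By \autoref{lemma_same_vals} we have the explicit description $\overline{I^{m/e}}=\{x\in R\mid v(x)\ge \tfrac{m}{e}v(I)\ \forall v\in\RV(I)\}=\bigcap_{V\in\RV(I)}\big(I^{\lceil (m/e)v_V(I)\rceil/v_V(I)}V\cap R\big)$, i.e. $\overline{I^{m/e}}=\bigcap_V (\fm_V^{\lceil m v_V(I)/e\rceil}V\cap R)$ where $\fm_V$ is the maximal ideal of $V$. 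Each factor $\fm_V^k V\cap R$ is either all of $R$ or a $\frp_V$-primary ideal of $R$ (since $V/\fm_V^kV$ is Artinian local with radical corresponding to $\frp_V$, and contraction of a primary ideal is primary). Intersecting finitely many such primary ideals, the associated primes of $R/\overline{I^{m/e}}$ are contained in $\{\frp_V\mid V\in\RV(I)\}$. This already shows $\cup_{w}\Ass(R/\overline{I^w})\subseteq\{\frp_V\}$, and combined with the (established) second equality $\{\frp_V\}=\cup_n\Ass(R/\overline{I^n})\subseteq\cup_w\Ass(R/\overline{I^w})$, all three sets coincide.

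The step I expect to be the main obstacle is verifying cleanly that $\fm_V^kV\cap R$ is $\frp_V$-primary (when proper) and, dually, that every $\frp_V$ genuinely appears — i.e. the ``no $v$ can be omitted'' assertions in \autoref{thm_Rees_val} and \autoref{lemma_same_vals} must be leveraged to guarantee $\frp_V\in\Ass(R/\overline{I^n})$ for suitable $n$, rather than merely $\supseteq$. For the first point, the contraction of a $\frp_V$-primary ideal of $V$ along $R\hookrightarrow V$ is primary with radical $\frp_V=\fm_V\cap R$ because the extension $R\to V$ is injective and $V$ is a DVR; a short argument or a pointer to \cite[Proposition 10.2.2]{huneke2006integral} suffices. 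For the second point, irredundancy of the intersection $\overline{I^n}=\bigcap_V(I^nV\cap R)$ for $n\gg0$ forces each $\frp_V$ to be an associated prime of at least one $\overline{I^n}$; this is exactly where the minimality clause of \autoref{thm_Rees_val} is used, and it is the only place where real care is needed. Everything else is bookkeeping with the valuative description of rational powers.
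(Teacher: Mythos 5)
Your proposal is correct and follows essentially the same route as the paper: cite the known description of $\cup_n\Ass(R/\overline{I^n})$ for the second equality, note the trivial inclusion $\supseteq$, and for $\subseteq$ use the valuative description from \autoref{lemma_same_vals} to write $\overline{I^w}$ as a finite intersection of contractions $\eta_V^{\lceil wv(I)\rceil}\cap R$, each of which is $\frp_V$-primary. The reduction to $w=m/e$ via \autoref{discrete_local} is harmless but unnecessary, since \autoref{lemma_same_vals} already applies to every rational $w$.
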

\begin{proof}
The second equality is proved in \cite[Discussion 10.1.3]{huneke2006integral} and the inclusion $\supseteq$ in the first one is trivial. It remains to show the first inclusion $\subseteq$.  For each $V\in \RV(I)$, let $\eta_V$ be its maximal ideal. From  \autoref{lemma_same_vals} we obtain
$\overline{I^w}=\cap_{V\in \RV(I)}(\eta_V^{\lceil wv(I)\rceil}\cap R)$. The result follows as each  ideal $\eta_V^{\lceil wv(I)\rceil}\cap R$ is $\frp_V$-primary.
\end{proof}

The following is a useful characterization of Rees valuations. 

\begin{proposition}\label{prop_min_vals}
Adopt \autoref{setup_Back_rat}. Let $I\subseteq R$ and  $v_1, \dots, v_r$  be discrete   valuations of  $R$. 
Then the following statements are equivalent.
\begin{enumerate}[\rm (i)]
\item $\RV(I)=\{v_1\ldots, v_r\}$.
\item $\overline{I^n}=\{x \in R \mid v_i(x)\geqslant n v_i(I), \forall i = 1,\ldots, r  \}$ for all  $n \in \NN $, and no $v_i$ can be omitted without breaking some of  these equalities.
\item  $\overline{I^w}=\{x \in R \mid v_i(x)\geqslant w v_i(I), \forall i = 1,\ldots, r  \}$ for all  $w \in \QQez $, and no $v_i$ can be omitted without breaking some of  these equalities.
\end{enumerate}
\begin{proof}
The implication (i) $\Rightarrow$ (ii) follows from \autoref{thm_Rees_val}, and (ii) $\Rightarrow$ (iii) from \autoref{lemma_same_vals}. It remains to show (iii) $\Rightarrow$ (i), but this is  content of \cite[Theorem 10.1.6]{huneke2006integral}, finishing the proof.
\end{proof}
\end{proposition}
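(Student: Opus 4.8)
The plan is to prove the equivalence by running the cycle $(i) \Rightarrow (ii) \Rightarrow (iii) \Rightarrow (i)$, using the results already established in this section together with the classical uniqueness theorem for Rees valuations. All three implications should be short once the appropriate earlier statement is invoked; the only real content is hidden in the last one.

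For $(i) \Rightarrow (ii)$, if $\RV(I) = \{v_1, \dots, v_r\}$ then both the valuative formula for $\overline{I^n}$ and the assertion that no $v_i$ can be dropped are precisely the content of \autoref{thm_Rees_val}, so nothing further is needed. For $(ii) \Rightarrow (iii)$, apply \autoref{lemma_same_vals} with $\mathscr{T} = \{v_1, \dots, v_r\}$: it promotes the integer-power description to the rational-power description for every $w \in \QQez$, and the non-omissibility clause transfers, since applying the same lemma to $\{v_1, \dots, v_r\} \setminus \{v_j\}$ shows that if $v_j$ could be deleted at the rational level it could also be deleted at the integer level, contradicting $(ii)$. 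For $(iii) \Rightarrow (i)$ — the one substantive direction — one observes, again via \autoref{lemma_same_vals}, that $(iii)$ says precisely that $\{v_1, \dots, v_r\}$ is a minimal family of discrete valuations of $R$ cutting out the filtration $\{\overline{I^n}\}_{n}$ by the inequalities $v_i(x) \gs n v_i(I)$; that such a minimal family is unique and coincides with $\RV(I)$ is \cite[Theorem 10.1.6]{huneke2006integral}.

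The main obstacle is exactly this last step, and it is less a calculation than a citation: it rests on the uniqueness half of Rees valuation theory, namely that the minimal set of discrete valuation rings describing all integral closures $\overline{I^n}$ by order inequalities does not depend on any choices. Granting that input, the proposition reduces to the bookkeeping with \autoref{thm_Rees_val} and \autoref{lemma_same_vals} described above.
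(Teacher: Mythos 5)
Your proposal is correct and follows essentially the same route as the paper: (i)\,$\Rightarrow$\,(ii) via \autoref{thm_Rees_val}, (ii)\,$\Rightarrow$\,(iii) via \autoref{lemma_same_vals}, and (iii)\,$\Rightarrow$\,(i) by the uniqueness theorem \cite[Theorem 10.1.6]{huneke2006integral}. Your explicit handling of how the non-omissibility clause transfers between (ii) and (iii) is a welcome elaboration of a point the paper leaves implicit, but it is not a different argument.
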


The next example shows a particular situation where the previous proposition directly applies to compute Rees valuations and rational powers of certain classes of ideals.

\begin{remark}\label{rem:determine}
The equivalence (i) $\Leftrightarrow$ (iii) in \autoref{prop_min_vals} shows that if one knows the Rees valuations of an ideal, then one can compute its rational power; and conversely, if one knows the rational powers of the ideal, then one can determine its Rees valuations. 
\end{remark}

\begin{example}\label{example_subs_arr}
	Let $R$ be a regular ring and $I\subseteq R$ an ideal. Assume that there exist prime ideals $\frp_1,\ldots, \frp_r$ and positive integers $a_1, \ldots, a_r$ such that $\overline{I^n}=\cap_{i=1}^r \frp_i^{(na_i)}$ is the primary decomposition of $\overline{I^n}$ for every $n\in \NN$ that is irredundant for some $n$; here $\frp_i^{(m)}$ denotes the $m$th-symbolic power of  $\frp_i$.  Since $R_{\frp_i}$ is a regular local ring for every $i$, the functions $v_i:R\to \NN$ given by $v_i( x)=\max\big\{n\mid x\in \frp_i^{(n)}\big\}$ extend to discrete valuations of $\Quot(R)$. Therefore, by \autoref{prop_min_vals} one has $\RV(I)=\{v_1,\ldots, v_r\}$ and  $\overline{I^w}=\cap_{i=1}^r \frp_i^{(\lceil wa_i\rceil)}$ for each  $w\in \QQ_{\gs 0}$ .
	
	For a particular example, consider $R$ a polynomial ring over a field and $H_1,\ldots, H_t$ subspaces of $R_1$. For each $i$ let  $I_i$ be the ideal generated by $H_i$ and set $I=I_1\cdots I_t$. The powers of $I$ are all integrally closed and  irredundant  primary decompositions  of them having the same format above can be found in \cite[Theorem 3.2]{CT22}. Therefore, these primary decompositions can be used to explicitly describe the Rees valuations and rational powers of $I$. 
\end{example}

The following proposition shows that for $w\gg 0$, every $\overline{I^w}$ can be expressed in terms of finitely many rational powers of $I$. 

\begin{proposition}\label{Noeth_rat_power}
Adopt \autoref{setup_Back_rat} and assume  $R$ is analytically unramified. 	Let  $\RV(I)=\{v_1, \dots, v_r\}$ and  set $e:=\lcm(v_1(I), \dots, v_r(I))$ (or any multiple of it). Let $u$ be a variable of degree 1, and consider the $\NN$-graded $R$-algebras
$$A:=R[Iu,Iu^2,\ldots, Iu^e]\subseteq B:=R[u].$$
If $C$ is the integral closure of $A$ in $B$, then

\begin{enumerate}[\rm (1)]
\item $C=\oplus_{n\in \NN}\overline{I^{\frac{n}{e}}}u^n$. 
\item $C$ is a finitely generated $A$-module and furthermore a finitely generated $R$-algebra.
\item There exists $r,m\in \ZZz$ such that $\overline{I^{mt}}=\left(\overline{I^{m}}\right)^t$ for every $t\in \NN$ and for every $w\gs \frac{r}{e}$ we have $\overline{I^{w+m}}=\overline{I^{w}}\,\overline{I^{m}}.
$
\end{enumerate}
\end{proposition}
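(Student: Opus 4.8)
The plan is to handle the three parts in order, the substance lying in (1) and (2). \emph{For (1)}, I would first compute the graded pieces of $A$: a degree-$n$ element with $n\ge 1$ is a sum of products of generators $f_1u^{j_1},\dots,f_ku^{j_k}$ with $f_i\in I$, $1\le j_i\le e$ and $\sum j_i=n$, so $A_n=\sum_{\lceil n/e\rceil\le k\le n}I^k=I^{\lceil n/e\rceil}$ because powers of $I$ are nested, while $A_0=R$. Since $A\subseteq B$ are $\NN$-graded, the integral closure $C$ of $A$ in $B$ is a graded subring, $C=\bigoplus_n C_nu^n$ with $C_n\subseteq R$, so it remains to show $C_n=\overline{I^{n/e}}$. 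For $\overline{I^{n/e}}\subseteq C_n$: given $x^e\in\overline{I^n}$, an integral equation of $x^e$ over $I^n$, multiplied term by term by suitable powers of $u$ (using $I^{ni}=A_{nei}$), becomes an integral equation of $x^eu^{ne}$ over $A$, and composing it produces one for $xu^n$. For the reverse inclusion: a homogeneous integral equation of $xu^n$ over $A$ reads $x^k+a_1'x^{k-1}+\cdots+a_k'=0$ with $a_i'\in A_{ni}=I^{\lceil ni/e\rceil}$, and evaluating any $v\in\RV(I)$ forces $v(x)\ge (n/e)\,v(I)$, hence $v(x^e)\ge n\,v(I)$ for all $v\in\RV(I)$, so $x^e\in\overline{I^n}$ by \autoref{thm_Rees_val}, i.e. $x\in\overline{I^{n/e}}$.

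\emph{For (2)}, set $A^{(e)}:=R[Iu^e]\subseteq A$; under $u^e\leftrightarrow t$ this is the ordinary Rees algebra of $I$, so since $R$ is analytically unramified, Rees's theorem (see \cite{huneke2006integral}) gives that its integral closure in $R[t]$, namely $C':=\bigoplus_m\overline{I^m}u^{me}$, is a finitely generated $A^{(e)}$-module, in particular Noetherian. I would then write $C=\bigoplus_{j=0}^{e-1}u^jM_j$ with $M_j:=\bigoplus_{m\ge 0}\overline{I^{m+j/e}}u^{me}$, a graded $C'$-module; since $\overline{I^w}$ is nonincreasing in $w$ and $m\le m+j/e<m+1$, we get $\overline{I^{m+1}}\subseteq\overline{I^{m+j/e}}\subseteq\overline{I^m}$, so $M_j$ is sandwiched between $C'$ and its submodule $\bigoplus_m\overline{I^{m+1}}u^{me}$, which is a degree shift of the ideal $\bigoplus_{m\ge 1}\overline{I^m}u^{me}$ of the Noetherian ring $C'$ and hence finitely generated over $C'$. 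As $C'$ modulo this submodule is then a Noetherian $C'$-module, each $M_j$ is finitely generated over $C'$, so $C$ is module-finite over $C'$, hence over $A^{(e)}\subseteq A$, and since $A$ is a finitely generated $R$-algebra, so is $C$. This descent is the step I expect to be the main obstacle: Rees's theorem only controls the ordinary normalized Rees algebra $\bigoplus_m\overline{I^m}t^m$, and one genuinely needs the monotonicity of rational powers to squeeze the $e$-refined pieces $M_j$ between visibly finite modules — one cannot shortcut via a product identity such as $\overline{I^{m+j/e}}=\overline{I^{j/e}}\,\overline{I^m}$, which fails in general.

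\emph{For (3)}, by (2) $C$ is a Noetherian $\NN$-graded ring with $C_0=R$, so a standard fact produces a Veronese subalgebra $C^{(d)}=\bigoplus_k C_{dk}$ generated over $R$ in a single degree, i.e. $C_{dk}=(C_d)^k$ for all $k$; replacing $d$ by $de$ we may take $d=em$ with $m\in\ZZz$, giving $\overline{I^{mt}}=C_{emt}=(C_{em})^t=(\overline{I^m})^t$ for all $t\in\NN$. Moreover $C$ is module-finite over the standard graded Noetherian ring $C^{(d)}$ (being integral and a finitely generated algebra over it), so each $C[j]:=\bigoplus_k C_{dk+j}$ is a finitely generated graded $C^{(d)}$-module, whence $C_{d(k+1)+j}=C_d\cdot C_{dk+j}$ for $k\gg 0$; choosing $r$ beyond the finitely many thresholds yields $C_{n+d}=C_d\cdot C_n$ for all $n\ge r$. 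Translating back, $\overline{I^{w+m}}=\overline{I^w}\,\overline{I^m}$ for $w\in\frac1e\NN$ with $w\ge r/e$, and then for every rational $w\ge r/e$ by \autoref{discrete_local}.
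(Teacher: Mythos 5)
Your proposal is correct, and its overall architecture (gradedness of the integral closure for (1), Rees's theorem for (2), the ``eventually standard Veronese'' fact for (3)) matches the paper's; the differences are in execution. For (1) the paper first identifies the $e$-th Veronese of $C$ with $\oplus_n\overline{I^n}u^{en}$ via the graded integral-closure theorem of Huneke--Swanson and then uses the same ``$(xu^n)^e\in C\Rightarrow xu^n\in C$'' trick you use for one inclusion; for the other inclusion you read valuation inequalities off a homogeneous equation of integral dependence and invoke \autoref{thm_Rees_val}, rather than passing through the Veronese identification --- both routes rest on the same input and are equally valid. The genuine divergence is in (2): the paper uses Rees's theorem in the containment form $\overline{I^n}\subseteq I^{n-k+1}$ to show that the tail $C_{\gs ek}$ sits inside the shifted free module $Au^{ek}$, so finiteness over $A$ is immediate; you use the module-finiteness form (the normalized Rees algebra $C'=\oplus_m\overline{I^m}u^{em}$ is finite over $R[Iu^e]$) and split $C=\oplus_{j=0}^{e-1}u^jM_j$ into $e$ graded $C'$-submodules of $C'$. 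Your route gives the cleaner structural statement that $C$ is finite over the normalization of the ordinary Rees algebra, at the cost of first knowing $C'$ is a Noetherian ring; note also that your sandwich between $C'$ and $\oplus_m\overline{I^{m+1}}u^{em}$ is superfluous, since $M_j$ is a $C'$-submodule of $C'$, i.e.\ an ideal of a Noetherian ring, hence finitely generated outright. Part (3) is essentially identical to the paper's argument, and you are slightly more explicit about reducing a general rational $w\gs r/e$ to $w\in\frac{1}{e}\NN$ via \autoref{discrete_local}.
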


\begin{proof}
We begin with (1). By \cite[Theorem 2.3.2]{huneke2006integral}, the algebra $C$ is $\NN$-graded and its $e$th-Veronese   is the integral closure of $A_e:=\oplus_{n\in \NN} I^n u^{en}$ in $B_e:=R[u^e]$, which is $C_e:=\oplus_{n\in \NN} \overline{I^n} u^{en}$ by \cite[Proposition 5.2.1]{huneke2006integral}. Write $C=\oplus_{n \in \NN}J_n u^n$ and fix $n$.  If $x\in J_n$, then  $x^e\in J_{en}=\overline{I^n}$, and so  $x\in \overline{I^{\frac{n}{e}}}$. On the other hand, if $x\in \overline{I^{\frac{n}{e}}}$, then $x^e\in \overline{I^n}$, and so $(xu^n)^e\in C$. Thus,  $xu^n$ is integral over $A$ and then $xu^n\in C$. It follows that $x\in J_n$.

We continue with (2). Note that $A=\oplus_{n\in \NN} I^{\lceil\frac{n}{e}\rceil} u^{n}$ and is Noetherian.
By Rees' theorem \cite{rees1961note} (see also \cite[Theorem 9.1.2]{huneke2006integral}), there exists $k\in \ZZz$ such that, for every integer $n\gs ek$, we have
$$\overline{I^{\frac{n}{e}}}
\subseteq \overline{I^{\lfloor\frac{n}{e}\rfloor}}
\subseteq I^{\lfloor\frac{n}{e}\rfloor -k+1}
\subseteq I^{\lceil\frac{n}{e}\rceil -k }.
$$
If $C_{\gs ek}:=\oplus_{n\gs ek}\overline{I^{\frac{n}{e}}}u^n$, then the latter inclusions show that $C_{\gs ek}\subset Au^{ek}$. Therefore, $C_{\gs ek}$, and so $C$, is a finitely generated $A$-module. In particular, $C$ is a finitely generated $R$-algebra.

Finally we prove (3). By (2) and \cite[Lemma 13.10]{GORTZ_WEDHORN}, there exists $m\in \ZZz$ such that $\overline{I^{mt}}=\left(\overline{I^{m}}\right)^t$ for every $t\in \NN$. Set
$C_m:=\oplus_{n\in \NN} \overline{I^{mt}} u^{emt}$. 
From (2), it follows that $C$ is a finitely generated $C_m$-algebra, and  since $C_m\subset C$ is an integral extension, by \cite[Lemma 2.1.9]{huneke2006integral}, $C$ is a finitely generated $C_m$-module. The statement now follows by considering $r\in \ZZz$ to be the largest degree of an element in a finite set of generators of $C$ as a $C_m$-module.
\end{proof}

We continue with the following statement that allows for base change in rational powers (cf. \cite[Corollary 19.5.2]{huneke2006integral}). A ring homomorphism $f:R\to S$ is {\it normal} if it is flat and if for every $\frp\in \Spec(R)$ and field extension $\KK$ of $R_\frp/\frp R_\frp$ the ring $\KK\otimes_R S$ is normal.

\begin{proposition}\label{base_change}
	Adopt \autoref{setup_Back_rat}. Let $f:R\to S$ be a normal homomorphism with $S$  Noetherian. For every $w\in \QQez$ we have $\overline{I^w}S=\overline{(IS)^w}.$
\end{proposition}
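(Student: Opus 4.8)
The plan is to bootstrap from the integer case $\overline{I^n}S=\overline{(IS)^n}$ ($n\in\NN$), which is \cite[Corollary 19.5.2]{huneke2006integral}, using that integral closure of a finitely generated algebra commutes with normal base change. Write $w=\tfrac{p}{q}$ with $p\in\NN$ and $q\in\ZZz$, and note we may assume $I\neq 0$. The inclusion $\overline{I^w}S\subseteq\overline{(IS)^w}$ is immediate from the definition: if $x^q\in\overline{I^p}$ then $f(x)^q\in\overline{I^p}S=\overline{(IS)^p}$ by the integer case, so $f(x)\in\overline{(IS)^w}$; since $\overline{(IS)^w}$ is an ideal this gives $\overline{I^w}S\subseteq\overline{(IS)^w}$.

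For the reverse inclusion I would pass to Rees-type algebras. Let $u$ be a variable of degree $1$, put $A:=R[Iu,Iu^2,\dots,Iu^q]\subseteq R[u]$, and recall from the proof of \autoref{Noeth_rat_power}(1) — which uses only \cite[Theorem 2.3.2, Proposition 5.2.1]{huneke2006integral} and so is valid in the generality of \autoref{setup_Back_rat} — that the integral closure of $A$ in $R[u]$ is $C=\bigoplus_{n}\overline{I^{n/q}}u^n$, whose degree-$p$ component is $\overline{I^w}$. Since $f$ is flat, $A\otimes_RS\hookrightarrow R[u]\otimes_RS=S[u]$, with image $S[ISu,\dots,ISu^q]$, and the same argument shows that the integral closure of $A\otimes_RS$ in $S[u]$ is $\bigoplus_{n}\overline{(IS)^{n/q}}u^n$, with degree-$p$ component $\overline{(IS)^w}$. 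Hence it suffices to prove that $C\otimes_RS$ equals the integral closure of $A\otimes_RS$ in $S[u]$. One inclusion is formal: $C$ is integral over $A$, hence $C\otimes_RS$ is integral over $A\otimes_RS$, and $C\hookrightarrow R[u]$ stays injective after $-\otimes_RS$; so $C\otimes_RS$ lands in that integral closure.

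For the other inclusion I would localize at $\Quot(R)$. Inside the normal domain $\Quot(R)[u]$, whose fraction field is $\Quot(A)=\Quot(R)(u)$, one has $C=\widetilde A\cap R[u]$ where $\widetilde A$ is the normalization of $A$. Because $f$ is normal, the fibre $\Quot(R)\otimes_RS$ over the generic point of $\Spec R$ is a normal Noetherian ring, hence a finite product of normal domains, so $\Quot(R)[u]\otimes_RS=(\Quot(R)\otimes_RS)[u]$ is integrally closed in its total ring of fractions, which one checks (using $I\neq 0$) coincides with that of $A\otimes_RS$. Applying the base-change theorem for integral closure — the algebra version of \cite[Corollary 19.5.2]{huneke2006integral}, cf.\ \cite[\S19]{huneke2006integral} — to the normal homomorphism $A\to A\otimes_RS$, we get that $\widetilde A\otimes_RS$ is the normalization of $A\otimes_RS$, equivalently the integral closure of $A\otimes_RS$ in $\Quot(R)[u]\otimes_RS$. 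Finally, flatness of $f$ lets $-\otimes_RS$ commute with the finite intersection $C=\widetilde A\cap R[u]$ of $R$-submodules of $\Quot(R)[u]$, so $C\otimes_RS=(\widetilde A\otimes_RS)\cap S[u]$, which is precisely the integral closure of $A\otimes_RS$ in $S[u]$.

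The step I expect to be the main obstacle is the one just outlined: checking that normalization commutes with the normal base change $f$ when neither $R$ nor $S$ (nor $A\otimes_RS$) is assumed normal or a domain — $A\otimes_RS$ is only reduced, the fibres of $f$ being reduced. The care needed is to perform every integral closure inside the common normal ambient ring $\Quot(R)[u]\otimes_RS$, to identify the relevant total rings of fractions, and to marry the base-change theorem for integral closure with the flat-base-change property of intersections; after that, the argument is bookkeeping with graded components.
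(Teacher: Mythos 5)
Your proof is correct and follows essentially the same route as the paper: form the graded algebra $A=R[Iu,\dots,Iu^q]\subseteq R[u]$, identify its integral closure $C=\bigoplus_n\overline{I^{n/q}}u^n$ via \autoref{Noeth_rat_power}(1), and base change along the normal map $R\to S$. The only difference is that the paper invokes \cite[Theorem 19.5.1]{huneke2006integral} directly for the integral closure of $A$ in the ambient ring $R[u]$ --- this is precisely the ``algebra version'' of the base-change theorem you appeal to --- so the detour through the normalization $\widetilde{A}$, total rings of fractions, and the intersection $C=\widetilde{A}\cap R[u]$ (the step you flag as the main obstacle) is unnecessary.
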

\begin{proof}
	Let $e_1:=\lcm\left(\{v(I)\mid v\in \RV(I)\}\right)$, $e_2:=\lcm\left(\{v(I)\mid v\in \RV(IS)\}\right)$, and $e:=\lcm(e_1,e_2)$. Let $u, A, B, C$ be as in \autoref{Noeth_rat_power} defined using the latter $e$. 
	By \cite[Theorem 19.5.1]{huneke2006integral}, $C\otimes_R S$ is the integral closure of $A\otimes_R S$ in  $B\otimes_R S$. 
	Under the isomorphism $B\otimes_R S\cong S[u]$, $A\otimes_R S$ corresponds to $S[(IS)u, (IS)u^2,\ldots, (IS)u^e]$ and $C\otimes_R S$ to $\oplus_{n\in \NN}(\overline{I^{\frac{n}{e}}}S)u^n$. The result follows by applying \autoref{Noeth_rat_power}(1) to $IS$.
\end{proof}

\subsection*{Rational powers of ideal sheaves} In this subsection we introduce the notion 
of rational powers of ideal sheaves in normal varieties over algebraically closed fields, inspired by \cite[D\'efinition 4.2.2]{lejeune2008cloture} and \cite[Definition 9.6.2]{LAZARSFELD}. 

\begin{setup}\label{setup_setup}
Let $X$ be a normal variety (integral, separated, and of finite type) over an algebraically closed field $\kk$ and let $\I$ be a nonzero $\O_X$-ideal.
\end{setup}

\begin{definition}[Rational powers of ideal sheaves]\label{def_int_clo}
Adopt \autoref{setup_setup}. Let $v: X^+\longrightarrow X$ be the normalization of the blowup of $X$ along $\I$, and let $E$ be its exceptional divisor. Note that $\I\cdot \O_{X^+}=\O_{X^+}(-E)$.
The {\it integral closure} of $\I$, denoted by $\overline{\I}$, is the $\O_X$-ideal $\overline{\I}:=v_*\left(\O_{X^+}(-E)\right)$ (cf. \cite[Definition 9.6.2]{LAZARSFELD}).
Moreover, for  $w\in \QQ_{\gs 0}$, we call the  $\O_X$-ideal $$\overline{\I^w}:=v_*\left(\O_{X^+}(-\lceil wE\rceil)\right)$$ the {\it $w$-th rational power} of $\I$.
\end{definition}

\begin{remark}\label{global}
	If $X$ in \autoref{def_int_clo} is affine, so that $\I = \tilde{I}$ for some $\kk[X]$-ideal $I$, then from \autoref{lemma_same_vals} it follows that $\overline{I^w}=\Gamma(X,\overline{\I^w})$ are the rational powers of $I$ as in \autoref{def_int_clo_rati}.
\end{remark}

\begin{remark}[{cf. \autoref{discrete_local}}]\label{discrete}
Write $E = \sum_{i=1}^r e_iE_i$, where $E_1,\ldots, E_r$ are distinct prime divisors of $X^+$, and set $e:=\lcm(e_1,\ldots, e_r)$ (or any multiple of it). 
Fix  $w\in \QQ_{>0}$ and let $n := \lceil we\rceil$. Then, $\overline{\I^w}= \overline{\I^{\frac{n}{e}}}$.   Indeed, since the statement is local we can assume $X$ is affine and that all the $E_i$ are supported in $X$. Fix $x\in \Gamma\left(X,\overline{\I^w}\right)$ so that  $\ord_{E_i}(v^*(x))\gs we_i$ for every $i$. Therefore, $\frac{e}{e_i}\ord_{E_i}(v^*(x))\gs n$ and it follows that  $\ord_{E_i}(v^*(x))\gs \frac{n}{e}e_i$.  We conclude $\overline{\I^w}\subseteq \overline{\I^{\frac{n}{e}}}$. Since the reverse inclusion clearly holds, we obtain $\overline{\I^w}= \overline{\I^{\frac{n}{e}}}$.
\end{remark}

We  have the following alternative way of computing rational powers similar to \cite[Remark 9.6.4]{LAZARSFELD} and \cite[Th\'eor\`eme 2.1]{lejeune2008cloture}. We use the definition of {\it projective morphism} from \cite{EGA}, which is more general than the one of \cite{Har} (see  \cite[tag 01W7, Lemma 29.43.3]{stacks-project}), and includes blowups of arbitrary varieties along  ideal sheaves \cite[tag 01OF, Lemma 31.32.13]{stacks-project}.

\begin{lemma}\label{alternative}
Adopt \autoref{setup_setup}.  If $v': X'\longrightarrow X$ is a projective birrational morphism such that $X'$ is a normal variety and $\I\cdot \O_{X'}=\O_{X'}(-F)$, for some effective Cartier divisor $F$ on $X'$, then  $$\overline{\I^w}=v'_*\left(\O_{X'}(-\lceil wF\rceil)\right).$$
\end{lemma}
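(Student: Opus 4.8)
The plan is to reduce the statement for the arbitrary projective birational modification $v' \colon X' \to X$ to the canonical one, namely the normalized blowup $v \colon X^+ \to X$ along $\I$ that appears in \autoref{def_int_clo}. The key point is that $X^+$ dominates any such $X'$: since $\I \cdot \O_{X'} = \O_{X'}(-F)$ is an invertible ideal sheaf, the universal property of blowing up gives a morphism from the blowup of $X$ along $\I$ to $X'$ over $X$, and since $X^+$ is the normalization of that blowup and $X'$ is already normal, this lifts to a morphism $g \colon X^+ \to X'$ with $v' \circ g = v$. I would record that $g$ is projective (a morphism between projective $X$-schemes, using the more general notion of projective morphism cited in the excerpt) and birational.

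Next I would compare the two exceptional data. On $X^+$ we have $\I \cdot \O_{X^+} = \O_{X^+}(-E)$, while pulling back $F$ gives $\I \cdot \O_{X^+} = \O_{X^+}(-g^*F)$; hence $g^*F = E$ as Cartier divisors on $X^+$. Consequently, for any $w \in \QQ_{\gs 0}$, applying $g^*$ and taking ceilings carefully, I claim $\lceil g^* F \rceil$ and $g^*\lceil wF\rceil$ interact so that $g_*\big(\O_{X^+}(-\lceil wE\rceil)\big) = \O_{X'}(-\lceil wF\rceil)$. The cleanest route is valuative: both sheaves, being reflexive on the normal variety $X'$, are determined by the orders of their sections along prime divisors, and for a divisor $D'$ on $X'$ with strict transform/pullback behaviour under $g$, one has $\ord_{D'}(s) \gs \lceil w \ord_{D'}(F)\rceil$ iff $\ord_{g^{-1}D'}(g^*s) \gs \lceil w\, \ord_{g^{-1}D'}(E)\rceil$ since $\ord(E)$ and $\ord(F)$ match divisor-by-divisor and both bounds are integers with the same fractional threshold. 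Alternatively, one can argue directly that $g_* \O_{X^+}(-\lceil wE\rceil) = \O_{X'}(-\lceil wF\rceil)$ by checking it locally, using that $g$ is proper birational between normal varieties so $g_*\O_{X^+} = \O_{X'}$, and that $\lceil wE\rceil$ restricted to the generic point of each $g$-exceptional divisor is nonnegative, so no extra vanishing is imposed beyond what $F$ already dictates.

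Finally I would push forward by $v'$: from $\overline{\I^w} = v_* \O_{X^+}(-\lceil wE\rceil) = v'_* g_* \O_{X^+}(-\lceil wE\rceil) = v'_* \O_{X'}(-\lceil wF\rceil)$, using $v = v' \circ g$ and the previous step, which is the desired formula. I would also note the reduction can be done on the affine pieces of $X$ and then invoke \autoref{global}/\autoref{discrete} together with \autoref{lemma_same_vals} if one prefers to phrase the divisor-order comparison in terms of the valuative description of rational powers rather than sheaf-theoretically.

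The main obstacle I anticipate is the bookkeeping with the ceiling function under pullback: while $g^*F = E$ as divisors, it is not true in general that $g^*\lceil wF\rceil = \lceil wg^*F\rceil$ as divisors (the ceilings are taken coefficient-wise and $g$ may identify or not the relevant components), so I must argue at the level of the pushed-forward sheaves — i.e. show the two sheaves of sections agree — rather than manipulating divisor classes naively. Handling the $g$-exceptional prime divisors on $X^+$ that map into lower-dimensional loci of $X'$, and checking they impose no condition, is the delicate point; the valuative reformulation (every section's order along every prime divisor of $X'$ is governed by the corresponding prime divisor of $X^+$, since the order valuations of divisors on $X'$ are a subset of those on $X^+$ by \autoref{lemma_same_vals} and \autoref{prop_min_vals}) is the safest way to make this rigorous.
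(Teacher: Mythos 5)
There is a genuine error at the very first step: you have the universal property of blowing up backwards. The universal property says that any morphism $f\colon Z\to X$ for which $f^{-1}\I\cdot\O_Z$ is invertible factors \emph{through} the blowup, i.e.\ admits a map $Z\to \mathrm{Bl}_{\I}X$; combined with the universal property of normalization (using that $X'$ is normal), this produces a morphism $g\colon X'\to X^{+}$ with $v=v'$ composed appropriately, i.e.\ $v'=v\circ g$. It does \emph{not} produce a morphism $X^{+}\to X'$. Indeed no such morphism exists in general: take $X=\AA^2$, $\I$ the maximal ideal at the origin, so $X^{+}=\mathrm{Bl}_0\AA^2$, and let $X'$ be a further blowup of $X^{+}$ at a point of its exceptional curve; then $\I\cdot\O_{X'}$ is still invertible, $X'$ is normal, but $X^{+}$ does not dominate $X'$. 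Everything downstream of this — pulling $F$ back to $X^{+}$, the identity $g^{*}F=E$ with your orientation, and the pushforward $g_{*}\bigl(\O_{X^{+}}(-\lceil wE\rceil)\bigr)=\O_{X'}(-\lceil wF\rceil)$ — is therefore built on a map that does not exist, and your auxiliary claim that ``the order valuations of divisors on $X'$ are a subset of those on $X^{+}$'' is likewise reversed: $X'$ may carry strictly more exceptional prime divisors than $X^{+}$ (the Rees valuations $\ord_{E_i}$ are among the $\ord_{F_j}$, not the other way around), and the whole point of the lemma is that these extra divisors impose no additional conditions.

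With the orientation corrected ($g\colon X'\to X^{+}$, $g^{*}E=F$), your remaining strategy essentially becomes the paper's proof, and your instincts about where the difficulty lies are sound: one first establishes the \emph{integer} case by showing $g_{*}\bigl(\O_{X'}(-nF)\bigr)=\O_{X^{+}}(-nE)$ (using $g^{*}\O_{X^{+}}(-E)\cong\O_{X'}(-F)$ and $g_{*}\O_{X'}=\O_{X^{+}}$), which yields $v'_{*}\O_{X'}(-nF)=\overline{\I^{n}}$ for all $n\in\NN$; one then avoids the ceiling-versus-pullback bookkeeping entirely by observing that the valuations $\ord_{F_j}$ therefore compute the integral closures of all powers of $\I$, so \autoref{lemma_same_vals} guarantees they also compute all rational powers, giving $v'_{*}\O_{X'}(-\lceil wF\rceil)=\overline{\I^{w}}$. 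You should rewrite the argument along these lines; as it stands, the proof does not go through.
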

\begin{proof}
 By the universal properties of blowups and normalization \cite[Proposition II.7.14, Exercise II.3.8]{Har}, the map $v'$ factors through the normalized blowup $v: X^+\longrightarrow X$.    
Let $g:X'\longrightarrow X^+$ be the factoring map. 
Let $E$ be such that $\I\cdot \O_{X^+}=\O_{X^+}(-E)$. 
We have the following 
isomorphism induced by $g^{*}\left(\O_{X^+}\right)=g^{-1} \O_{X^+}\otimes_{g^{-1},  \O_{X^+}}\O_{X'}\xrightarrow{\,\,\,\,\sim\,\,\,\,} \O_{X'}$
\begin{equation}\label{eqn_map1}
g^*(\O_{X^+}(-E))= g^{-1}(\O_{X^+}(-E))\otimes_{g^{-1} \O_{X^+}}\O_{X'}
\xrightarrow{\,\,\,\,\sim\,\,\,\,}
\O_{X^+}(-E)\cdot\O_{X'}=\I\cdot\O_{X'} =  \O_{X'}(-F).
\end{equation}
Indeed, a surjection of  invertible sheaves is an isomorphism. Since $g$ is also projective \cite[tag 01W7, Lemma 29.43.15]{stacks-project} we have  $g_{*}\left(\O_{X'}\right)=\O_{X^+}$ by \cite[proof of Corollary III.11.4]{Har} and \cite[tag 01W7, Lemma 29.43.4]{stacks-project}. Therefore, for every $n\in \ZZz$ we have the following commutative diagram of natural maps.
\begin{equation*}
	\begin{tikzcd}
		\O_{X^+} \arrow{r}{\sim}  & g_*g^*\left(\O_{X^+}\right) \arrow{r}{\sim}  & g_*\left(\O_{X'}\right) = \O_{X^+}\\
		\arrow[u,hook] \O_{X^+}(-nE) \arrow{r}{\sim} & \arrow[u] g_*g^*\left(\O_{X^+}(-nE)\right) \arrow[r,"\sim","\autoref{eqn_map1}"'] &\arrow[u,hook]  g_*\left(\O_{X'}(-nF)\right)
	\end{tikzcd}
\end{equation*}
It follows that $\O_{X^+}(-nE) =g_*\left(\O_{X'}(-nF)\right)$, and this implies  $v'_*\left(\O_{X'}(-nF)\right)=
v_*\left(\O_{X^+}(-nE)\right)=
\overline{\I^n}$ for every $n\in \NN$.
Set $F = \sum_{i=1}^r e_iF_i$, where $F_1,\ldots, F_r$ are distinct prime divisors of $X'$. Working on affine subsets of $X$, we have shown that the integral closures of the powers of $\I$ are defined via the valuations $\ord_{F_i}$. Therefore, \autoref{lemma_same_vals} implies that the same is true for rational powers of $\I$, completing the proof.
%
\end{proof}

The previous lemma yields  the following version of the birrational transformation rule for rational powers of ideals  (cf. \cite[Theorem 9.2.33]{LAZARSFELD}, \cite[Lemme 4.15]{lejeune2008cloture}).

\begin{theorem}[Birrational transformation rule for rational powers]\label{birr_transf}
Adopt \autoref{setup_setup}. Let $f:Y\longrightarrow X$ be a projective birational morphism with $Y$  a normal variety and  set $\I_Y:=\I\cdot \O_Y$.   For every $w\in \QQ_{\gs 0}$ we have
$$\overline{\I^w}=f_*(\overline{\I^w_Y}).$$
\end{theorem}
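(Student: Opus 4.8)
The plan is to deduce the formula from the alternative description of rational powers in \autoref{alternative}, applied on a single normal model dominating both $X$ and $Y$. First I would observe that since $f$ is birational, hence dominant, and $\I$ is nonzero, the ideal sheaf $\I_Y = \I\cdot\O_Y$ is nonzero; in particular its rational powers are defined via \autoref{def_int_clo}. Let $g\colon Y^+\longrightarrow Y$ be the normalization of the blowup of $Y$ along $\I_Y$, with exceptional divisor $E'$, so that $Y^+$ is a normal variety and $\I_Y\cdot\O_{Y^+}=\O_{Y^+}(-E')$.

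Next I would verify that the composite $h:=f\circ g\colon Y^+\longrightarrow X$ meets the hypotheses of \autoref{alternative} relative to $\I$: it is birational, being a composition of birational morphisms; it is projective, being a composition of projective morphisms (cf. the projectivity argument for the factoring map in the proof of \autoref{alternative}); and $Y^+$ is normal. Furthermore, by transitivity of extension of ideal sheaves,
\[
\I\cdot\O_{Y^+}=(\I\cdot\O_Y)\cdot\O_{Y^+}=\I_Y\cdot\O_{Y^+}=\O_{Y^+}(-E'),
\]
so $E'$ is an effective Cartier divisor on $Y^+$ with $\I\cdot\O_{Y^+}=\O_{Y^+}(-E')$.

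Then I would apply \autoref{alternative} twice, in each case with the same divisor $E'$. Applied to the morphism $h$ and the ideal $\I$ on $X$ it yields, for every $w\in\QQ_{\gs 0}$,
\[
\overline{\I^w}=h_*\!\left(\O_{Y^+}(-\lceil wE'\rceil)\right)=f_*g_*\!\left(\O_{Y^+}(-\lceil wE'\rceil)\right),
\]
where the second equality is the functoriality $(f\circ g)_*=f_*\circ g_*$. Applied to the morphism $g$ and the ideal $\I_Y$ on $Y$ it yields $\overline{\I^w_Y}=g_*\!\left(\O_{Y^+}(-\lceil wE'\rceil)\right)$. Substituting the latter into the former gives $\overline{\I^w}=f_*(\overline{\I^w_Y})$, as desired.

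I expect essentially no genuine obstacle here: all the geometric content is already packaged in \autoref{alternative}, and what remains is the formal bookkeeping of composing morphisms. The only points warranting a sentence of care are that $\I_Y\neq 0$ (so that the normalized blowup of $Y$ along $\I_Y$, and hence $\overline{\I^w_Y}$, make sense) and that the composite $h$ is projective in the EGA sense of \cite{EGA} used throughout; both are standard.
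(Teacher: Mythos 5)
Your proof is correct and follows essentially the same route as the paper's: both apply \autoref{alternative} to the composite of $f$ with the normalized blowup $\mu\colon Y^+\to Y$ of $Y$ along $\I_Y$, identify $\I\cdot\O_{Y^+}$ with $\O_{Y^+}(-D)$ for the exceptional divisor $D$, and conclude by functoriality of pushforward. The paper is merely terser about the routine verifications (projectivity and birationality of the composite, nonvanishing of $\I_Y$) that you spell out.
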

\begin{proof}
Let $\mu: Y^+\longrightarrow Y$ be the normalization of the blowup of $Y$ along $\I_Y$ and $D$ its exceptional divisor so that  $(f\mu)^{-1}\I\cdot \O_{Y^+}= \mu^{-1}\I_Y\cdot \O_{Y^+}=\O_{Y^+}(-D)$. Then, by \autoref{alternative} we have
$$\overline{\I^w}=f_*\mu_*\left(\O_{Y^+}(-\lceil wD\rceil)\right)=f_*(\overline{\I^w_Y}),$$
as desired.
\end{proof}

\section{Rees valuations and rational powers of invariant ideals}\label{sec_Rees_Rat}

In this section we describe the Rees valuations and rational powers of several classes of invariant ideals. This will prove \autoref{thmB}. 
We begin with the following setup.

\begin{setup}\label{setup_Invar}
Let $\kk$ be a field and let $R$ be a finitely generated  $\kk$-algebra.  Assume $R$ is an integral domain and denote by $\Quot(R)$ its field of fractions. We set $I\subset R$ to be an ideal and  $\B:=\{\fb_\ell\mid \ell\in \Lambda\}\subseteq R$ a $\kk$-basis of $R$.
\end{setup}

The following definitions will be used in the main results of this section.

\begin{definition}\label{def_B_monomial}
Adopt \autoref{setup_Invar}.
 We say that $v: R\rightarrow \NN$ is a  {\it $\B$-monomial function} (on $R$) if for every  $f=\sum_{c_i\in \kk^*}c_i \fb_{\ell_i}\in R$ we have  $v(f)=\min\{v\left( \fb_{\ell_i}\right)\}$. If in addition  $v$ is a discrete  valuation  of  $R$, then we say that it  is a {\it $\B$-monomial valuation}.
%
%
\end{definition}

\begin{definition}\label{def_pos_poly}
We say that a polyhedron $\Gamma\subseteq \RR^d_{\gs 0}$ is {\it positive} if it is of the form $\Gamma=\mathscr{Q}
+\RR_{\gs 0}^d$ with 
$\mathscr{Q}\subseteq \RR_{\gs 0}^d$ a rational polytope.
We say that a (supporting) hyperplane $H$ of $\Gamma$ is  {\it non-coordinate} if it is  defined by $\langle\bh,\fX\rangle=c$ where $\bh=(h_1,\ldots, h_d)\in \NN^d$, $c\in \ZZ_{>0}$, and $\fX=(X_1,\ldots, X_d)$ are the coordinate variables of $\RR^d$.
\end{definition}

The following proposition provides a description of the Rees valuations and rational powers of ideals satisfying a natural assumption.

\begin{proposition}\label{prop_Rees_Package}
	Adopt  \autoref{setup_Invar}.
	Let  $v_1, \ldots, v_d$ be $\B$-monomial functions  of $R$ and set $\underline{v}=(v_1,\ldots, v_d)$.
 Let $\Gamma\subseteq \RR^d_{\gs 0}$ be a positive polyhedron such that
for every $n\in \NN$ the integral closure $\overline{I^n}$ is spanned as a $\kk$-vector space by the set
$\{\fb\in \B\mid \underline{v}(\fb)\in n\Gamma\}$.
 Let $H_1, \ldots, H_r$ be the   non-coordinate 
 hyperplanes of $\Gamma$  where $H_k$ is defined by $\langle\bh_k,\fX\rangle=c_k\in \ZZz$  and  $\bh_k=(h_{k,1},\ldots, h_{k,d})\in \NN^d$. Assume that for each $k$ the $\B$-monomial function $V_k:=\langle\bh_k,\underline{v}\rangle = h_{k,1}v_1+\ldots+h_{k,d}v_d$ is a valuation of $R$. Then we have
\begin{enumerate}[\rm (1)]
	\item $\RV(I)=\{V_1,\ldots, V_r\}$.
	\item For every $w\in \QQez$, $\overline{I^w}$ is spanned as a $\kk$-vector space by  $\{\fb\in \B\mid \underline{v}(\fb)\in w\Gamma\}.$
\end{enumerate}
\end{proposition}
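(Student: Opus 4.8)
The plan is to recast both statements as assertions about the valuations $V_1,\ldots,V_r$ and then invoke the characterization of Rees valuations in \autoref{prop_min_vals}. The first step is purely polyhedral. Since $\Gamma$ is a positive polyhedron contained in $\RR^d_{\gs 0}$, each of its facets lies either in a coordinate hyperplane or in one of the $H_k$, so an irredundant inequality description reads $\Gamma=\{x\in\RR^d_{\gs 0}\mid \langle\bh_k,x\rangle\gs c_k,\ k=1,\ldots,r\}$, whence $w\Gamma=\{x\in\RR^d_{\gs 0}\mid \langle\bh_k,x\rangle\gs wc_k,\ k=1,\ldots,r\}$ for $w\in\QQez$. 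As $\underline{v}(\fb)\in\NN^d\subseteq\RR^d_{\gs 0}$ for every $\fb\in\B$, the membership $\underline{v}(\fb)\in w\Gamma$ is equivalent to $V_k(\fb)=\langle\bh_k,\underline{v}(\fb)\rangle\gs wc_k$ for all $k$; and because each $V_k$ is a $\B$-monomial function, for arbitrary $f=\sum_ic_i\fb_{\ell_i}\in R$ we have $V_k(f)=\min_iV_k(\fb_{\ell_i})$, so the $\kk$-span of $\{\fb\in\B\mid \underline{v}(\fb)\in w\Gamma\}$ equals $\{f\in R\mid V_k(f)\gs wc_k \text{ for all }k\}$. (The case $w=0$ is trivial, $\overline{I^0}=R$.)

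Combining this with the hypothesis gives $\overline{I^n}=\{f\in R\mid V_k(f)\gs nc_k,\ k=1,\ldots,r\}$ for all $n\in\NN$. For $w=\tfrac pq\in\QQez$, the definition of rational power and the fact that each $V_k$ is a valuation then yield
$$f\in\overline{I^w}\iff f^q\in\overline{I^p}\iff qV_k(f)\gs pc_k\ \text{for all }k\iff V_k(f)\gs wc_k\ \text{for all }k,$$
so $\overline{I^w}=\{f\in R\mid V_k(f)\gs wc_k \text{ for all }k\}$, which by the first step is spanned by $\{\fb\in\B\mid \underline{v}(\fb)\in w\Gamma\}$. This is exactly assertion (2).

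For (1), first note $I\subseteq\overline I=\{f\mid V_k(f)\gs c_k\ \forall k\}$, so $V_k(I)\gs c_k$; conversely, if $f\in\overline{I^w}$ with $w=\tfrac pq$ then $f^q$ is integral over $I^p$, hence (a standard property of valuations centered in $R$) $V_k(f^q)\gs V_k(I^p)=pV_k(I)$, i.e. $V_k(f)\gs wV_k(I)$. Combined with $V_k(I)\gs c_k$ this upgrades the previous display to $\overline{I^w}=\{f\in R\mid V_k(f)\gs wV_k(I),\ k=1,\ldots,r\}$ for every $w\in\QQez$. By \autoref{prop_min_vals}, to conclude $\RV(I)=\{V_1,\ldots,V_r\}$ it remains to show that no $V_k$ may be deleted from this description, and this is the crux of the argument: since $\Gamma\cap H_k$ is a genuine facet, dropping the $k$-th inequality yields a positive polyhedron strictly containing $\Gamma$, and --- crucially using that the \emph{whole} filtration $\{\overline{I^n}\}_{n\in\NN}$, not merely $\overline I$, is governed by $\{n\Gamma\}_n$ --- one produces an $n$ and a basis element $\fb\in\B$ with $\underline{v}(\fb)$ in $n$ times that larger polyhedron but not in $n\Gamma$; then $V_j(\fb)\gs nV_j(I)$ for $j\neq k$ while $\fb\notin\overline{I^n}$, so $V_k$ is indispensable. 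The first two steps are bookkeeping with $\B$-monomiality; the genuine difficulty is this last point --- exhibiting, for each facet $H_k$, an honest basis element ``escaping $\Gamma$ through $H_k$'' --- since the facet structure only guarantees lattice points in the relevant region and one must know that these are realized by elements of $\B$.
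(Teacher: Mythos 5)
Your argument is correct and follows essentially the same route as the paper: both translate membership in $w\Gamma$ into the valuation inequalities $V_k(\cdot)\gs wc_k$ and then invoke \autoref{prop_min_vals}, your write-up merely proving (2) before (1) and supplying more detail. The irredundancy of the $V_k$, which you rightly single out as the one delicate point (a facet of $\Gamma$ must be witnessed by an actual element of $\B$), is precisely the part the paper leaves implicit in its one-line appeal to \autoref{prop_min_vals}, so your version is if anything the more explicit one.
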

\begin{proof}
Part (1) follows from 
 \autoref{prop_min_vals}. Part (2) follows from (1) since $f=\sum_{c_i\in \kk^*}c_i \fb_{\ell_i}\in \overline{I^w}$ if and only if $V_k(\fb_{\ell_i})\gs wc_k$ for every $i, k$ if and only if $\uv(\fb_{\ell_i})\in w\Gamma$ for every $i$.
\end{proof}

The previous proposition leads to the following notion.

\begin{definition}[Rees package]\label{def_Rees_Package}
	If $\B$, $\underline{v}=(v_1,\ldots, v_d)$, and $\Gamma$ as in \autoref{prop_Rees_Package} exist, then we say that  $(\B, \underline{v}, \Gamma)$ is a {\it Rees package} for $I$.
\end{definition}

The following lemma offers a sufficient condition for the functions $V_k$ in \autoref{prop_Rees_Package} to be valuations.  

\begin{lemma}\label{lemma_Rees_package}
	Adopt  \autoref{setup_Invar}. 
	Let $v_1,\ldots, v_d$ be $\B$-monomial valuations and set $\underline{v}:=(v_1,\ldots, v_d)$.  
	Assume that 
	for every $f=\sum_{c_i\in \kk^*}c_i \fb_{\ell_i}$ and $g=\sum_{e_i\in \kk^*}e_i \fb_{\mu_i}$ in $R$ with $\underline{v}(\fb_{\ell_i})=\underline{v}(\fb_{\ell_j})$ and $\underline{v}(\fb_{\mu_i})=\underline{v}(\fb_{\mu_j})$ for every $i,j$, there exist $\fb\in\B$ in the expansion of $fg$ as a linear combination of $\B$ such that $\uv(\fb)=\uv(f)+\uv(g)$.
	%
	Then for every nonzero $\bh=(h_{1},\ldots, h_{d})\in \NN^d$ the  $\B$-monomial function $V:=\langle\bh,\underline{v}\rangle = h_{1}v_1+\ldots+h_{d}v_d$ is a valuation of $R$.
\end{lemma}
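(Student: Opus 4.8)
By construction $V$ is the $\B$-monomial function on $R$ whose value on a basis element $\fb\in\B$ is $V(\fb)=\langle\bh,\uv(\fb)\rangle=\sum_s h_s v_s(\fb)$, so that $V\colon R\to\NN$ and, for a nonzero $f$ written in the basis as $f=\sum_i c_i\fb_{\ell_i}$ (with $c_i\in\kk^*$ and distinct $\fb_{\ell_i}$), one has $V(f)=\min_i\langle\bh,\uv(\fb_{\ell_i})\rangle$; I emphasize that this can be strictly larger than the pointwise combination $\sum_s h_s v_s(f)$, so $V$ must be handled through its values on $\B$. Now a nonzero $\NN$-valued function on the domain $R$ that is subadditive ($V(f+g)\gs\min\{V(f),V(g)\}$) and multiplicative ($V(fg)=V(f)+V(g)$) extends uniquely to a discrete valuation of $\Quot(R)$ that is nonnegative on $R$, i.e.\ to a valuation of $R$ in the sense used here; so the whole task is to verify these two properties. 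Subadditivity is immediate from $\B$-monomiality, since the basis elements occurring in $f+g$ form a subset of those occurring in $f$ together with those occurring in $g$. Hence everything reduces to multiplicativity.

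It is convenient to use the $\NN^d$-grading of $R$ as a $\kk$-vector space, $R=\bigoplus_{\alpha\in\NN^d}R_\alpha$ with $R_\alpha=\spann_\kk\{\fb\in\B:\uv(\fb)=\alpha\}$, writing $f=\sum_\alpha f_\alpha$ for the corresponding decomposition; then $V(f)=\min\{\langle\bh,\alpha\rangle: f_\alpha\neq 0\}$. First I would prove $V(fg)\gs V(f)+V(g)$: any basis element $\fb$ appearing in $fg$ appears in some product $\fb_{\ell_i}\fb_{\mu_j}$ of basis elements of $f$ and $g$, and since each $v_s$ is a $\B$-monomial valuation, $v_s(\fb)\gs v_s(\fb_{\ell_i}\fb_{\mu_j})=v_s(\fb_{\ell_i})+v_s(\fb_{\mu_j})$; thus $\uv(\fb)\gs\uv(\fb_{\ell_i})+\uv(\fb_{\mu_j})$ coordinate-wise, and pairing with $\bh\in\NN^d$ gives $V(\fb)\gs\langle\bh,\uv(\fb_{\ell_i})\rangle+\langle\bh,\uv(\fb_{\mu_j})\rangle\gs V(f)+V(g)$. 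Taking the minimum over all such $\fb$ yields the inequality. The same coordinate-wise estimate shows $R_\alpha R_\beta\subseteq\bigoplus_{\gamma\gs\alpha+\beta}R_\gamma$, a fact I will use below.

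The reverse inequality $V(fg)\ls V(f)+V(g)$ is the crux, and the obstacle is that the contributions $f_\alpha g_\beta$ to $fg$ might cancel precisely the low-degree terms we want to detect. To deal with this I would refine $\langle\bh,\cdot\rangle$ to a monomial order: fix a well-order $\preceq$ on $\NN^d$ that is compatible with addition, has least element $0$, refines the preorder given by $\langle\bh,\cdot\rangle$ (so $\langle\bh,\alpha\rangle<\langle\bh,\beta\rangle\Rightarrow\alpha\prec\beta$), and refines the coordinate-wise order (so $\alpha\ls\beta$ coordinate-wise with $\alpha\neq\beta$ implies $\alpha\prec\beta$) --- e.g.\ order first by $\langle\bh,\cdot\rangle$, then by total degree, then lexicographically. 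Set $\alpha^\circ:=\min_\preceq\{\alpha:f_\alpha\neq 0\}$ and $\beta^\circ:=\min_\preceq\{\beta:g_\beta\neq 0\}$; since $\preceq$ refines $\langle\bh,\cdot\rangle$ we get $\langle\bh,\alpha^\circ\rangle=V(f)$ and $\langle\bh,\beta^\circ\rangle=V(g)$. For $\gamma:=\alpha^\circ+\beta^\circ$, the inclusion $R_\alpha R_\beta\subseteq\bigoplus_{\delta\gs\alpha+\beta}R_\delta$ shows that $(f_\alpha g_\beta)_\gamma\neq 0$ forces $\gamma\gs\alpha+\beta$ coordinate-wise, hence $\gamma\succeq\alpha+\beta\succeq\alpha^\circ+\beta^\circ=\gamma$, and additivity and strictness of $\preceq$ then force $\alpha=\alpha^\circ$, $\beta=\beta^\circ$; therefore $(fg)_\gamma=(f_{\alpha^\circ}g_{\beta^\circ})_\gamma$. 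Finally, $f_{\alpha^\circ}$ and $g_{\beta^\circ}$ are $\uv$-homogeneous with $\uv(f_{\alpha^\circ})=\alpha^\circ$ and $\uv(g_{\beta^\circ})=\beta^\circ$, so the hypothesis of the lemma applied to them produces a basis element $\fb$ occurring in $f_{\alpha^\circ}g_{\beta^\circ}$ with $\uv(\fb)=\alpha^\circ+\beta^\circ=\gamma$; hence $(fg)_\gamma\neq 0$ and $V(fg)\ls\langle\bh,\gamma\rangle=V(f)+V(g)$. Combined with the previous paragraph this gives multiplicativity, and thus the lemma --- which is exactly the ingredient needed to apply \autoref{prop_Rees_Package}.
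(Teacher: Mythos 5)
Your proof is correct, and it lands on the same essential reduction as the paper's --- isolate a single bihomogeneous pair of components of $f$ and $g$, apply the hypothesis to that pair, and check that the resulting basis element cannot be cancelled by the other components --- but the device used to isolate that pair is genuinely different. The paper first discards the basis elements of $f$ and $g$ whose $V$-value exceeds $V(f)$, resp.\ $V(g)$, so that the surviving $\uv$-values lie on the parallel hyperplanes $\langle\bh,\fX\rangle=V(f)$ and $\langle\bh,\fX\rangle=V(g)$, and then chooses a vertex $\ba$ of the Minkowski sum $\cP_f+\cP_g$; the uniqueness of the decomposition $\ba=\ba_f+\ba_g$ of a vertex selects the two components. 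You instead refine $\langle\bh,\cdot\rangle$ to an additive total order $\preceq$ on $\NN^d$ that also refines the componentwise order, and take the $\preceq$-least components $f_{\alpha^\circ}$, $g_{\beta^\circ}$. The two mechanisms play the same role, but your version makes fully explicit a point the paper's proof leaves implicit: a basis element $\fb$ with $\uv(\fb)=\gamma$ could a priori also occur in a product $f_\alpha g_\beta$ with $\alpha+\beta$ strictly below $\gamma$ componentwise (only $\uv(\fb)\gs\alpha+\beta$ is guaranteed), and excluding this requires a comparison in the componentwise order, not merely along $\langle\bh,\cdot\rangle$ --- which is exactly what your requirement that $\preceq$ refine the componentwise order delivers, whereas the paper must extract it from the vertex property. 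Your preliminary observations (that $V(f)$ may strictly exceed $\sum_s h_sv_s(f)$, which is why the lemma is not automatic, and the inequality $V(fg)\gs V(f)+V(g)$ deduced from $\uv(\fb)\gs\uv(\fb_{\ell_i})+\uv(\fb_{\mu_j})$) are also correct and are the right ones to make.
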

\begin{proof}
	Fix $f=\sum_{c_i\in \kk^*}c_i \fb_{\ell_i}$ and $g=\sum_{e_i\in \kk^*}e_i \fb_{\mu_i}$ in $R$. Since  $V$ is $\B$-monomial, we clearly have $V(f+g)\gs \min\{V(f), V(g)\}$.
	Notice that 
	$V(fg) = V(f) + V(g)$ is equivalent to  $V(\fb)=V(f)+V(g)$ for some $\fb\in \B$ in the expansion of $fg$. 
	To show the existence of such $\fb$ we can further assume
	that for every $i$ one has  $V(\fb_{\ell_i})=V(f)$ and $V(\fb_{\mu_i})=V(g)$. 
	Let  $\mathscr{P}_f:=\conv\left(\{\uv(\fb_{\ell_i})\}\right)$ and
	$\mathscr{P}_g:=\conv\left(\{\uv(\fb_{\mu_i})\}\right)$ and notice that these polytopes belong to the parallel hyperplanes $\langle\bh,\fX\rangle=V(f)$ and $\langle\bh,\fX\rangle=V(g)$, respectively. Consider a vertex $\ba$ of the Minkowski sum  $\cP_f+\cP_g$. Therefore, there exist  unique  $\ba_f\in \cP_f\cap \NN^d$ and $\ba_g\in \cP_g\cap \NN^d$ such that $\ba_f+\ba_g=\ba$, and then 
	we can assume $\uv(\fb_{\ell_i})=\ba_f$ and $\uv(\fb_{\mu_j})=\ba_g$ for every $i,j$. The statement now follows by the  assumption.
\end{proof}

\subsection{Torus-invariant (monomial) ideals in semigroup rings}

In this subsection we show that Torus-invariant ideals, i.e. those generated by monomials, in semigroup rings have a Rees package.

\begin{setup}\label{setup_torus_inv}
Let $\mathscr{S}\subset \ZZ^s$ be an \emph{affine semigroup} of rank $s$.  Let $\kk$ be a field and let $\kk[\cS]$ be the {\it semigroup ring}  of $\cS$, i.e., the subalgebra of the Laurent polynomial ring $\kk[x_{1}^{\pm 1},\dots,x_{s}^{\pm 1}]$ spanned as a vector space by the set of monomials $\fx^\cS:=\{\fx^\bn=x_1^{n_1}\cdots x_s^{n_s}\mid \bn=(n_1,\ldots, n_s)\in \cS\}$, and multiplication induced by the operation of $\cS$. Let  $\C(\cS):=\RR_{\gs 0} \cS$ be the cone generated by $\cS$ and assume it is {\it strongly convex}, i.e., $\C(\cS)$ does not contain any subspace of positive dimension.
Let   $\F_1, \ldots, \F_d $  be the facets of $\C(\cS)$ and  for each $k$ let $\F_k$ be defined by $\langle\mathbf{f}_k,\fX \rangle=0$ with $\mathbf{f}_k\in \ZZ^s$.  Let $v_k$ be the $\fx^\cS$-monomial valuation determined by  $\fx^\bn\mapsto \langle\mathbf{f}_k, \bn \rangle$.

 Let $I\subseteq \kk[\cS]$ be a {\it monomial} ideal, i.e., it is generated by monomials. Set $\log(I):=\{\bn\in \cS\mid \fx^\bn\in I\}$ and $\Gamma(I):=\conv\left(\log(I)\right)\subseteq \RR^s$.
\end{setup}

We refer to \autoref{example_mon_sem} for a visual representation of the  previous notations. The following proposition is an important ingredient to describe the Rees package of monomial ideals.

\begin{proposition}\label{int.cls.mon.aff}
	Adopt \autoref{setup_torus_inv}. The integral closure $\overline{I}$ is  a monomial ideal generated by
	the monomials
  $	\{\fx^\bn\mid \bn\in \Gamma(I)\cap \cS\}$.
\end{proposition}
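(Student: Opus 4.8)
The plan is to reduce to two assertions: first, that $\overline{I}$ is again a monomial ideal, and second, that the monomials contained in $\overline{I}$ are exactly $\{\fx^\bn\mid \bn\in \Gamma(I)\cap \cS\}$. Since a monomial ideal equals the ideal generated by the monomials lying in it, these two assertions give the statement. For the first one I would use that $\kk[\cS]=\bigoplus_{\bn\in\cS}\kk\fx^\bn$ is $\ZZ^s$-graded, that a monomial ideal is precisely a $\ZZ^s$-homogeneous ideal, and that the integral closure of a homogeneous ideal is homogeneous; this is the affine-semigroup counterpart of \cite[Proposition~1.4.6]{huneke2006integral} and is proved in the same way. (Strong convexity of $\C(\cS)$ lets one reduce to a positive $\ZZ$-grading if desired.)

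The heart of the argument is the following reformulation of membership in $\overline{I}$: for $\bn\in\cS$,
$$\fx^\bn\in\overline{I}\quad\Longleftrightarrow\quad (\fx^\bn)^q\in I^q\ \text{ for some } q\in\ZZz.$$
The implication ``$\Leftarrow$'' is immediate: $\fx^\bn$ then satisfies the equation $X^q-(\fx^\bn)^q=0$, whose constant coefficient lies in $I^q$, so $\fx^\bn\in\overline{I}$ by \autoref{def_int_clo_rati}. For ``$\Rightarrow$'' I would take an equation of integral dependence $(\fx^\bn)^k+c_1(\fx^\bn)^{k-1}+\dots+c_k=0$ with $c_i\in I^i$ and extract its $\ZZ^s$-graded component in degree $k\bn$. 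Because $(\kk[\cS])_{j\bn}=\kk\fx^{j\bn}$ is one-dimensional for every $j$, writing the degree-$i\bn$ component of $c_i$ as $\gamma_i\fx^{i\bn}$ with $\gamma_i\in\kk$, this component reads $(1+\gamma_1+\dots+\gamma_k)\fx^{k\bn}=0$; hence $\gamma_{i_0}\neq 0$ for some $i_0$, so $\fx^{i_0\bn}$ is a nonzero homogeneous component of $c_{i_0}\in I^{i_0}$. Since $I^{i_0}$ is a monomial ideal, $\fx^{i_0\bn}\in I^{i_0}$, i.e.\ $(\fx^\bn)^{i_0}\in I^{i_0}$.

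It then remains to translate the right-hand side into convex geometry. Because $\log(I)$ is stable under translation by $\cS$, the monomial ideal $I^q$ has exponent set $\log(I^q)=\{\ba_1+\dots+\ba_q\mid \ba_1,\dots,\ba_q\in\log(I)\}$, so $(\fx^\bn)^q\in I^q$ holds precisely when $\bn=\tfrac1q(\ba_1+\dots+\ba_q)$ is an average of $q$ elements of $\log(I)$; in particular $\fx^\bn\in\overline{I}$ forces $\bn\in\conv(\log(I))=\Gamma(I)$. For the converse, if $\bn\in\cS\cap\Gamma(I)$ then by Carath\'eodory's theorem $\bn=\sum_j\lambda_j\ba_j$ with $\ba_j\in\log(I)$, $\lambda_j\geqslant 0$ and $\sum_j\lambda_j=1$; since $\bn$ and the $\ba_j$ are lattice points the weights $\lambda_j$ may be chosen rational (the admissible weights form a nonempty rational polytope), and clearing denominators yields $q\in\ZZz$ with $q\bn\in\log(I^q)$, i.e.\ $\fx^\bn\in\overline{I}$. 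Combined with the monomiality of $\overline{I}$, this shows $\overline{I}$ is generated by $\{\fx^\bn\mid \bn\in\Gamma(I)\cap\cS\}$. I would also record that for any monomial generating set $\fx^{\mathbf{g}_1},\dots,\fx^{\mathbf{g}_t}$ of $I$ one has $\Gamma(I)=\conv(\mathbf{g}_1,\dots,\mathbf{g}_t)+\C(\cS)$, so $\Gamma(I)$ is a rational polyhedron with recession cone $\C(\cS)$ --- the polyhedron that will enter the Rees package in \autoref{mon.aff.rees}.

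I do not anticipate a serious obstacle: this proposition is the affine-semigroup transcription of the classical description of integral closures of monomial ideals. The two points that need genuine care are the Carath\'eodory-plus-rationality passage between lattice points of $\Gamma(I)=\conv(\log(I))$ and honest members of the Minkowski sums $\log(I^q)$ --- the same mechanism underlying \cite[Proposition~1.4.6]{huneke2006integral} --- and the fact that $\overline{I}$ remains monomial over an arbitrary (possibly finite or imperfect) field; for the latter one cannot simply average over a torus, but the graded argument, via the scaling automorphism obtained after adjoining a variable, still applies.
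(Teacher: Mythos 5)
Your argument is correct, but it is a genuinely different route from the one the paper takes. The paper's proof is a two-line reduction: pass to the normalization $\tilde{\cS}=\ZZ\cS\cap\RR_{\gs 0}\cS$, invoke the known description of integral closures of monomial ideals in \emph{normal} affine semigroup rings \cite[Theorem 4.46]{bruns2009polytopes}, and descend to $\kk[\cS]$ via the contraction property $\overline{I}=\overline{I\kk[\tilde{\cS}]}\cap\kk[\cS]$ for integral extensions \cite[Proposition 1.6.1]{huneke2006integral}, noting $\Gamma(I\,\kk[\tilde{\cS}])=\Gamma(I)$. You instead reprove the result from first principles: homogeneity of $\overline{I}$ under the $\ZZ^s$-grading, the one-dimensionality of the graded pieces $(\kk[\cS])_{j\bn}$ to extract $(\fx^\bn)^{i_0}\in I^{i_0}$ from an integral dependence equation, and the Carath\'eodory-plus-rationality translation between $\Gamma(I)\cap\cS$ and the Minkowski sums $\log(I^q)$. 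Your version is self-contained, works uniformly whether or not $\cS$ is normal, and makes explicit the identity $\Gamma(I)=\conv(\mathbf{g}_1,\ldots,\mathbf{g}_t)+\C(\cS)$ that is implicitly needed later for \autoref{mon.aff.rees}; the paper's version buys brevity by outsourcing exactly the two points you had to argue by hand (the normal case and the descent). The only steps in your write-up that deserve the care you already flag are the homogeneity of $\overline{I}$ over an arbitrary field (covered by the standard $\ZZ^s$-graded argument, not by torus averaging) and the rationality of the convex weights, both of which you handle correctly.
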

\begin{proof}
Let $\tilde{\cS}:=\ZZ\cS\cap \RR_{\gs 0}\cS$ be the normalization of $\cS$.  By \cite[Theorem 4.46]{bruns2009polytopes} the statement holds for $I\,\kk[\tilde{\cS}]$. The statement now follows for $I$ by \cite[Proposition 1.6.1]{huneke2006integral}  since $\kk[\cS]\subseteq \kk[\tilde{\cS}]$ is an integral extension \cite[Lemma 4.41]{bruns2009polytopes} and $\Gamma(I\,\kk[\tilde{\cS}])=\Gamma(I)$.
\end{proof}

We are now ready to describe a Rees package for monomial ideals.

\begin{theorem} \label{mon.aff.rees}
	Adopt \autoref{setup_torus_inv} .  If $\uv:=(v_1,\ldots, v_d)$ and $\Sigma:=\conv\left(\uv\left(\log(I)\right)\right)\subseteq \RR_{\gs 0}^d$,   then $(\fx^\cS,\uv, \Sigma)$ is a Rees package for $I$. In particular, the Rees valuations of $I$ are determined by the non-coordinates hyperplanes of $\Sigma$ and  for every $w\in \QQez$, $\overline{I^w}$ is spanned as a $\kk$-vector space by    $\{\fx^\bn\in \fx^\cS\mid \underline{v}\left(\fx^\bn\right)\in w\Sigma\}=\{\fx^\bn\mid \bn\in w\Gamma(I)\cap \cS\}$.
\end{theorem}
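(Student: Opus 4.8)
The plan is to deduce the whole statement from \autoref{prop_Rees_Package}: once I verify that the triple $(\fx^\cS,\uv,\Sigma)$ meets its hypotheses, both the identification of $\RV(I)$ and the vector-space description of $\overline{I^w}$ follow formally (and the final equality of the two index sets is a separate, easy point about the linear map $\uv$). Since by \autoref{setup_torus_inv} each $v_k$ is already an $\fx^\cS$-monomial valuation, three things remain to be checked, which I would carry out in this order: first, that $\Sigma$ is a positive rational polyhedron contained in $\RR^d_{\gs 0}$; second, that for every $n\in\NN$ the integral closure $\overline{I^n}$ is the $\kk$-span of $\{\fx^\bn\in\fx^\cS\mid\uv(\fx^\bn)\in n\Sigma\}$; third, that the function $V_k=\langle\bh_k,\uv\rangle$ attached to each non-coordinate supporting hyperplane of $\Sigma$ is a valuation of $\kk[\cS]$.

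For the geometry of $\Sigma$: the map $\uv\colon\RR^s\to\RR^d$, $\bn\mapsto(\langle\mathbf{f}_1,\bn\rangle,\dots,\langle\mathbf{f}_d,\bn\rangle)$, is linear, and it is injective because its kernel is the lineality space of $\C(\cS)$, which is trivial by strong convexity; hence $\Sigma=\uv(\Gamma(I))$. Writing $I=(\fx^{\bn_1},\dots,\fx^{\bn_t})$ (possible since $\kk[\cS]$ is Noetherian), one has $\log(I)=\bigcup_j(\bn_j+\cS)$, so $\Gamma(I)=\conv\{\bn_1,\dots,\bn_t\}+\C(\cS)$ is a rational polyhedron with recession cone $\C(\cS)$; here I would use $\conv(\cS)=\C(\cS)$, which holds because $\C(\cS)$ is the union over $N\in\NN$ of the simplices $\conv\{0,N\mathbf{g}_1,\dots,N\mathbf{g}_m\}$, for $\mathbf{g}_1,\dots,\mathbf{g}_m\in\cS$ a generating set. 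Applying $\uv$ and using $v_k(\fx^\bn)=\langle\mathbf{f}_k,\bn\rangle\geq 0$ on $\cS$ gives $\Sigma\subseteq\RR^d_{\gs 0}$, and when $\C(\cS)$ is simplicial $d=s$ and $\uv$ is a linear automorphism carrying $\C(\cS)$ onto $\RR^d_{\gs 0}$, so $\Sigma=\uv(\conv\{\bn_j\})+\RR^d_{\gs 0}$ is positive on the nose.

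For the integral closure: $I^n$ is again a monomial ideal and $\Gamma(I^n)=n\Gamma(I)$ (clear from $\log(I^n)=\{\textstyle\sum_j a_j\bn_j\mid a_j\in\NN,\ \sum_j a_j=n\}+\cS$ together with $\conv(\cS)=\C(\cS)=n\C(\cS)$), so by \autoref{int.cls.mon.aff} applied to $I^n$ the ideal $\overline{I^n}$ is the monomial ideal whose set of monomials is exactly $\{\fx^\bn\mid\bn\in n\Gamma(I)\cap\cS\}$; this lattice-point set is stable under adding elements of $\cS$ because $\C(\cS)$ is the recession cone of $n\Gamma(I)$, so $\overline{I^n}$ is its $\kk$-span. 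Since $\uv$ is injective and $\QQ$-linear, $\uv(\bn)\in n\Sigma=\uv(n\Gamma(I))$ iff $\bn\in n\Gamma(I)$, whence $\{\fx^\bn\in\fx^\cS\mid\uv(\fx^\bn)\in n\Sigma\}=\{\fx^\bn\mid\bn\in n\Gamma(I)\cap\cS\}$, and the same equivalence with an arbitrary $w\in\QQez$ in place of $n$ supplies the last identity in the statement. For the valuations: if $\langle\bh_k,\fX\rangle=c_k$ supports $\Sigma$ with $\bh_k=(h_{k,1},\dots,h_{k,d})\in\NN^d$, then $V_k=\sum_j h_{k,j}v_j$ satisfies $V_k(\fx^\bn)=\langle\mathbf{g}_k,\bn\rangle$ with $\mathbf{g}_k:=\sum_j h_{k,j}\mathbf{f}_j\in\ZZ^s$, which is nonzero since the $\mathbf{f}_j$ lie in the pointed cone $\C(\cS)^\vee$ and so admit no nontrivial nonnegative relation; thus $V_k$ is the $\fx^\cS$-monomial function attached to $\mathbf{g}_k$, and it is a valuation by \autoref{lemma_Rees_package}, whose hypothesis is automatic here: injectivity of $\uv$ forces any $f=\sum_i c_i\fx^{\bn_i}$ all of whose monomials share a common $\uv$-value to be a scalar multiple of a single monomial, and likewise for $g$, so $fg$ is a scalar multiple of one monomial $\fx^{\bn+\bmm}$ with $\uv(\bn+\bmm)=\uv(f)+\uv(g)$. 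Then \autoref{prop_Rees_Package} yields $\RV(I)=\{V_1,\dots,V_r\}$ and the spanning description of $\overline{I^w}$, which with the injectivity identity is exactly the claim.

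The step I expect to be the real obstacle is the first one: checking that $\Sigma$ literally satisfies \autoref{def_pos_poly}. This is transparent in the simplicial case, but in general $\Sigma$ lies in the proper subspace $\uv(\RR^s)\subsetneq\RR^d$ and is therefore not of the form $\mathscr{Q}+\RR^d_{\gs 0}$. The fix I would use is to observe that the proof of \autoref{prop_Rees_Package} only ever invokes the coordinate inequalities $X_i\geq 0$ (automatically satisfied by $\uv(\fx^\bn)$) together with the supporting hyperplanes of $\Sigma$ whose normal lies in $\NN^d$; so it suffices to verify that every bounded-below facet of $\Sigma$ not contained in a coordinate hyperplane is cut out by such a hyperplane. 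That follows because the inward normal of the corresponding facet of $\Gamma(I)$ lies in $\C(\cS)^\vee=\RR_{\gs 0}\mathbf{f}_1+\dots+\RR_{\gs 0}\mathbf{f}_d$, hence, after clearing denominators, is a nonnegative integer combination $\sum_j a_j\mathbf{f}_j$, which transports under $\uv$ to a normal $(a_1,\dots,a_d)\in\NN^d$. Put differently, the Rees valuations of $I$ are read off exactly the facets of the Newton polyhedron $\Gamma(I)$ that are not parallel to a facet of $\C(\cS)$, the facets of $\C(\cS)$ itself corresponding to the coordinate functions $v_k$, which are never among the Rees valuations of $I$.
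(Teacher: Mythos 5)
Your argument follows the paper's own route --- $\Gamma(I^n)=n\Gamma(I)$ plus \autoref{int.cls.mon.aff} for the integral closures, injectivity of $\uv$ on $\fx^\cS$ to make the hypothesis of \autoref{lemma_Rees_package} vacuous, then \autoref{prop_Rees_Package} --- and your supporting verifications are correct. Moreover, the issue you isolate at the end is real and is not addressed in the paper's (very terse) proof: when $\C(\cS)$ is not simplicial one has $d>s$, so $\Sigma=\conv\left(\uv\left(\log(I)\right)\right)$ lies in the proper affine subspace $\uv(\RR^s)$ and is not of the form $\mathscr{Q}+\RR^d_{\gs 0}$; hence it is not a positive polyhedron in the sense of \autoref{def_pos_poly}, which \autoref{prop_Rees_Package} formally requires. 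Your repair --- write each facet normal of $\Gamma(I)$ as a nonnegative rational combination of $\mathbf{f}_1,\ldots,\mathbf{f}_d$, possible because that normal lies in $\C(\cS)^\vee=\RR_{\gs 0}\mathbf{f}_1+\cdots+\RR_{\gs 0}\mathbf{f}_d$, then note that only these inequalities together with $X_i\gs 0$ enter the proof of \autoref{prop_Rees_Package} --- is sound. An equivalent patch is to replace $\Sigma$ by $\conv\left(\uv\left(\log(I)\right)\right)+\RR^d_{\gs 0}$, which is honestly positive and cuts out the same points $\uv(\bn)$, $\bn\in\cS$, because $\bn-\mathbf{p}\in\C(\cS)$ is equivalent to $\uv(\bn)-\uv(\mathbf{p})\in\RR^d_{\gs 0}$.

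One correction to your closing gloss: it is not true that the $v_k$ are never Rees valuations of $I$, nor that only the facets of $\Gamma(I)$ not parallel to facets of $\C(\cS)$ matter. ``Non-coordinate'' in \autoref{def_pos_poly} means the constant term $c$ is a positive integer, not that the normal avoids the coordinate directions; a facet of $\Gamma(I)$ with normal $\mathbf{f}_k$ at level $c>0$ contributes (a multiple of) $v_k$ as a Rees valuation. For instance, with $\cS=\NN^2$ and $I=(x^2y,xy^2)=xy\,(x,y)$ one has $\overline{I^n}=x^ny^n(x,y)^n$, the three inequalities $X\gs n$, $Y\gs n$, $X+Y\gs 3n$ are all irredundant, and so $\ord_x,\ord_y\in\RV(I)$. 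This does not affect the body of your proof, whose operative criterion (facets of $\Sigma$ not contained in a coordinate hyperplane, i.e.\ those with $c>0$) is the correct one; only the final ``put differently'' sentence should be deleted or restated.
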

\begin{proof}
Clearly $\Gamma(I^n)=n\Gamma(I)$ for every $n\in \NN$, and so $n\Sigma=\conv\left(\uv\left(\log(I^n)\right)\right)$.  Also, for every $\bmm\in \NN^d$ the set $\uv^{-1}(\bmm)\cap \fx^\cS$ has at most one element, therefore  the assumption in \autoref{lemma_Rees_package}  is trivially satisfied  by  $\fx^\cS$ and $\underline{v}$.  The statements then follow from \autoref{int.cls.mon.aff}, 
 \autoref{prop_Rees_Package}, and \autoref{def_Rees_Package}.
\end{proof}

\begin{remark}\label{rem_NND}
We note that Theorem \autoref{mon.aff.rees} applies to any ideal whose integral closure is monomial. Such ideals are called {\it Newton non-degenerate} in the literature; see \cite{Saia96}.
\end{remark}

In the next example we describe the notation in this section for one specific ideal.

\begin{example}\label{example_mon_sem}
We follow the notation from \autoref{mon.aff.rees}. Let $\cS\subset \ZZ^2$ be the (normal) semigroup generated by $\{(2,1), (1,3)\}$, see \autoref{fig1} below. Thus $\kk[\cS]=\kk[x_1^2x_2, x_1x_2^3]$. The valuations $v_1, v_2$ corresponding to the rays $\F_1, \F_2$ are determined by  $\fx^\bn \mapsto -n_1+2n_2$ and $\fx^\bn \mapsto 3n_1-n_2$, respectively. The ideal $I\subset \kk[\cS]$ is generated by $x_1^4x_2^2$ and $x_1^3x_2^4$. Therefore $\Gamma(I)$ has as facets the rays $(4,2)+\RR_{\gs 0}(2,1)$, $(3,4)+\RR_{\gs 0}(1,3)$, and the segment $\conv\left((3,4), (4,2)\right)$. It follows that $\Sigma$ has supporting hyperplanes  $H: v_1=0$,  $H_1: v_1+v_2=10$, and $H_2: v_2=5$. The latter two, $H_1,H_2$, are the non-coordinate ones and so these are the ones that determine the Rees valuations of $I$.

	\begin{figure}[ht]
	\centering
	\mbox{\subfigure{\includegraphics[height=1.6in]{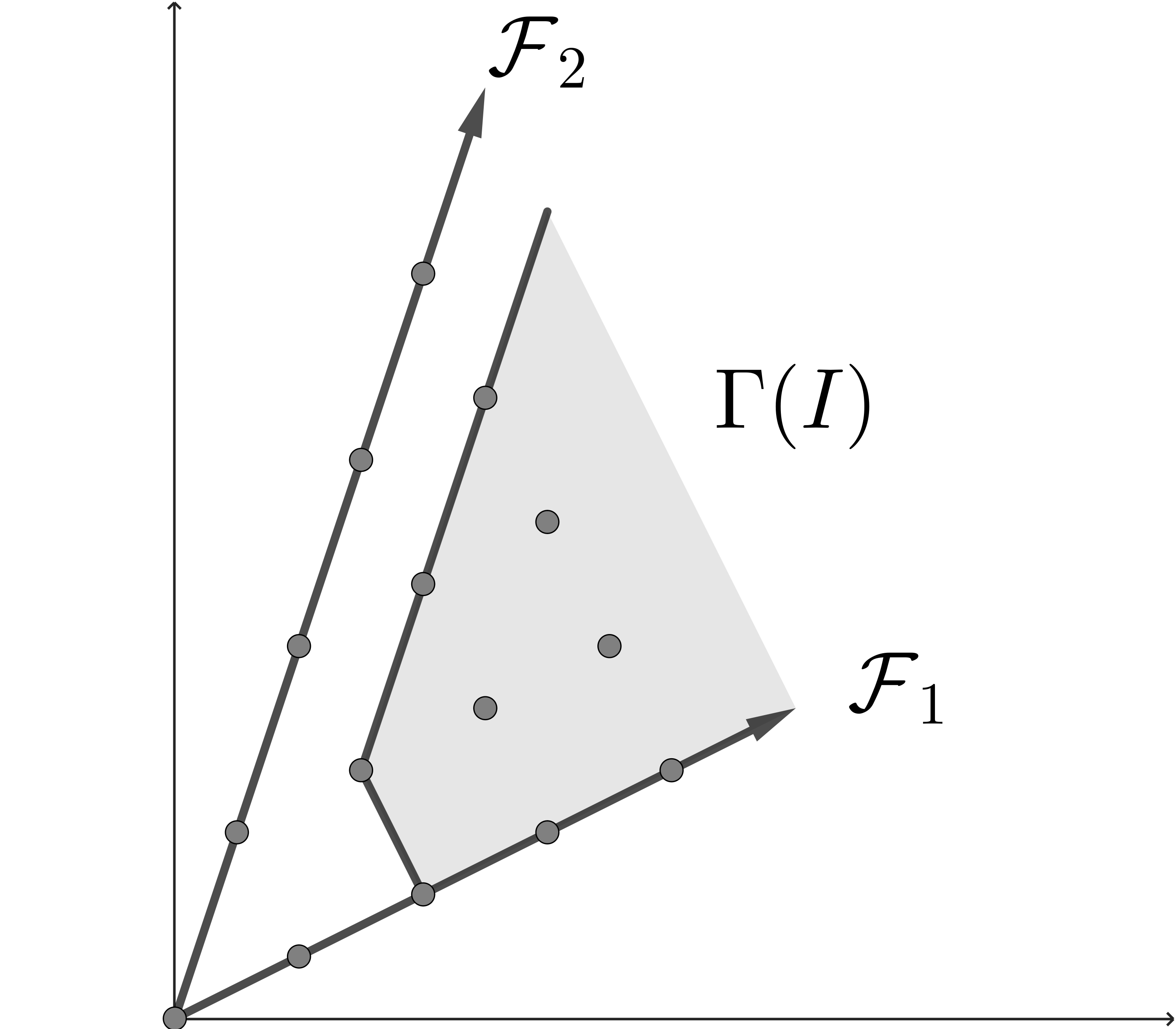}} \subfigure{\includegraphics[height=1.6in]{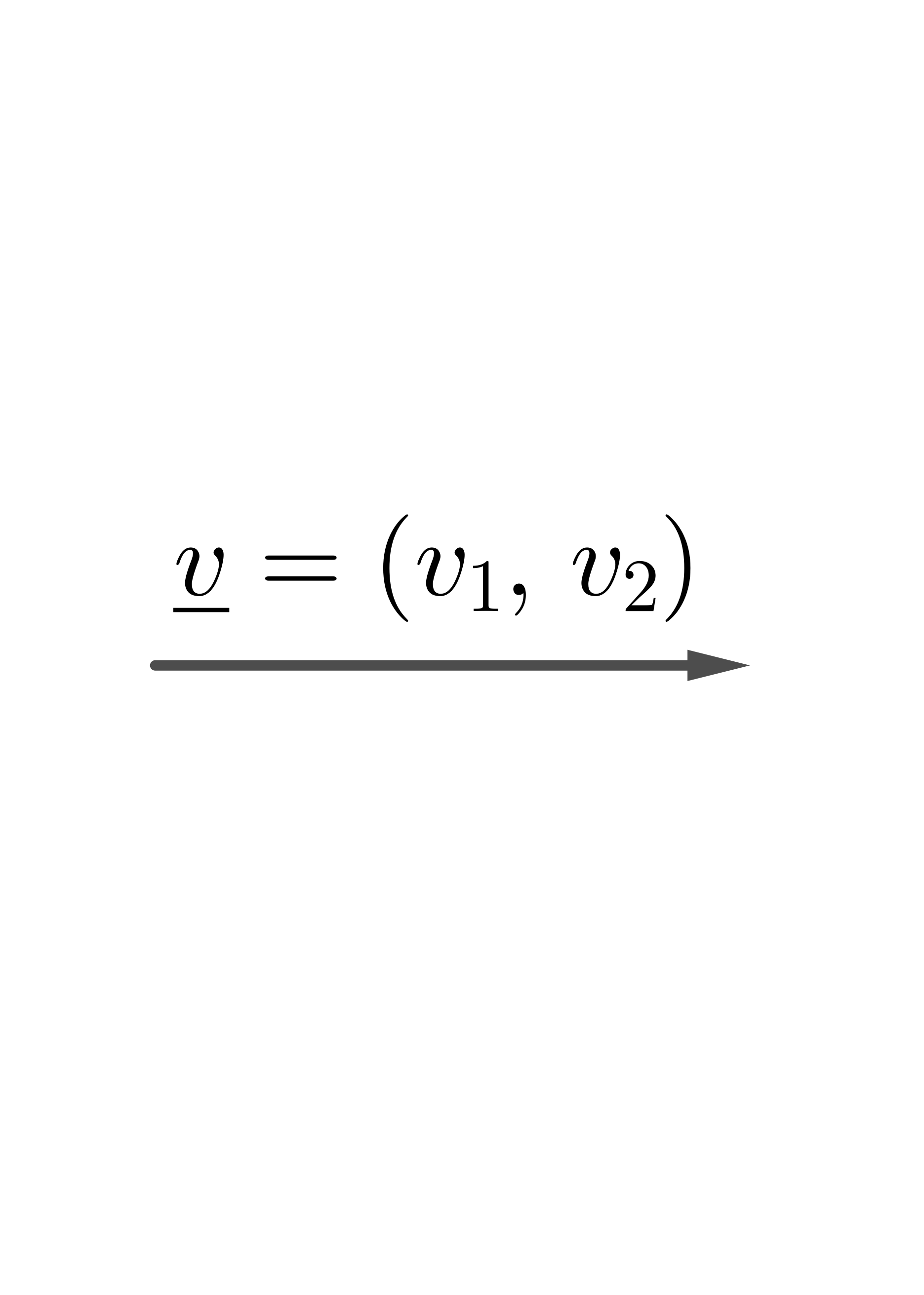}}
		\subfigure{\includegraphics[height=1.6in]{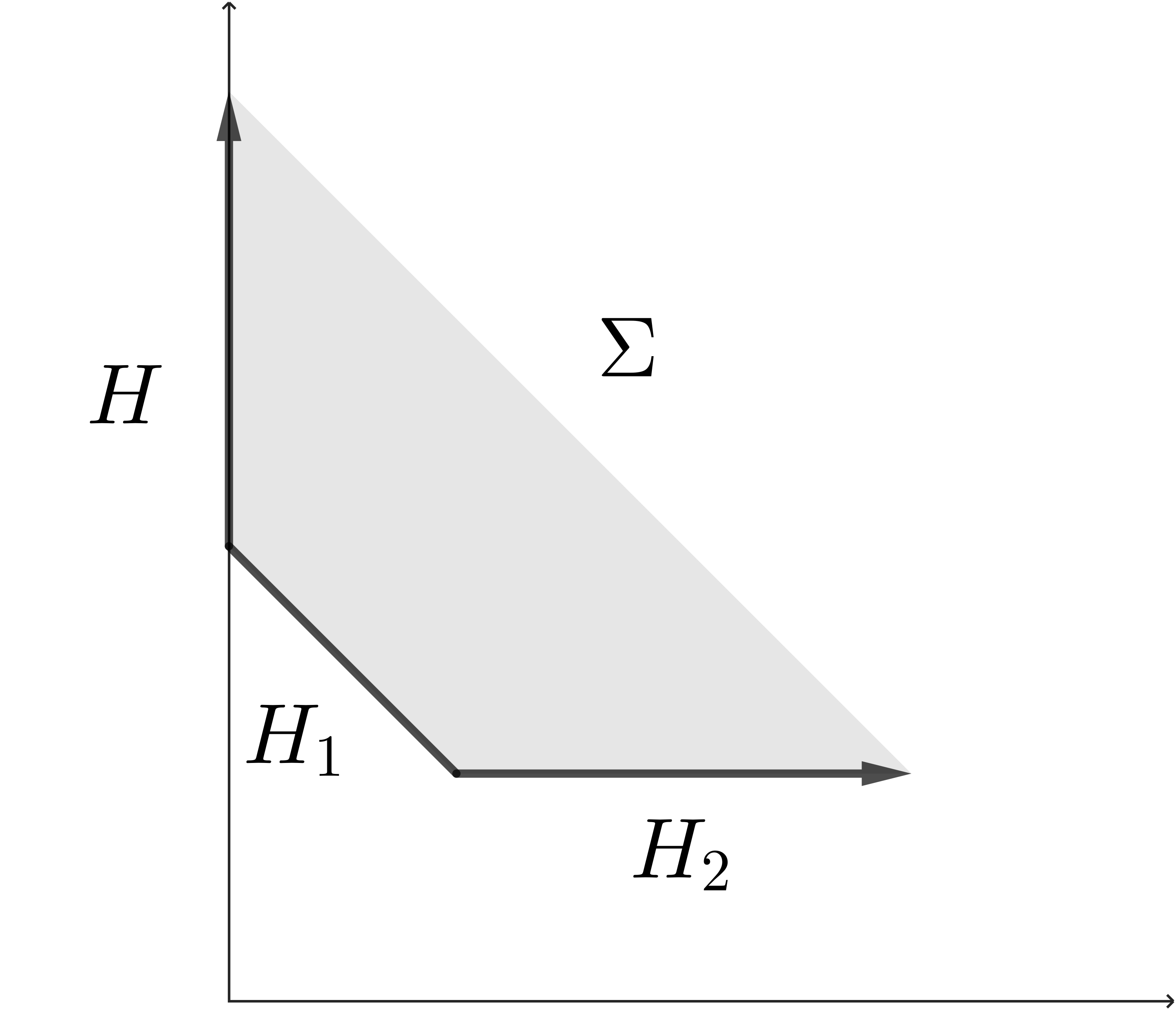} }
	}
	\caption{The semigroup $\cS= \NN(2,1)+ \NN(1,3)$ and ideal $I=\left(x_1^4x_2^2,\, x_1^3x_2^4\right)\subset \kk[\cS]$.} \label{fig1}
\end{figure}
\end{example}
\subsection{Sums of products of determinantal ideals and $\GL_m(\CC) \times \GL_n(\CC)$-invariant ideals}\label{subsect_gen} In this subsection we show that ideals obtained by taking sums of products of determinantal ideals in a generic matrix $X$ have a Rees package. As a consequence we obtain that $\GL_m(\CC) \times \GL_n(\CC)$-invariant ideals of the polynomial ring $\CC[X]$ have a Rees package. Throughout we follow the treatment  in \cite{deConcini80,bruns2022determinants,Henriques_Varbaro,Bruns91}.

\begin{setup}\label{setup_gen}
Let $X$ be an $m\times n$ generic matrix with $m\ls n$. Let $\kk$ be a field and $R=\kk[X]$ the corresponding polynomial ring. For any $t\ls m$ let $I_t\subset R$ denote the ideal generated by the $t$-minors of $X$. The $t$-minor corresponding to  rows $i_1< \cdots <i_t\ls m$ and columns $j_1< \cdots < j_t\ls n$ will be denoted as  $[i_1,\dots, i_t | j_1, \dots, j_t]$. For an positive integer $p$ we denote by $[p]$ the set $[p]:=\{1,\ldots, p\}$. A {\it diagram} $\sigma$ is a vector $\sigma:=(s_1,\ldots, s_p)\in \NN^p$ with $s_1\gs \cdots \gs s_p$. If $\tau=(t_1,\ldots, t_q)$ and $\sigma=(s_1,\ldots, s_p)$ are two diagrams,  we write $\tau\subseteq \sigma$ if $q\ls p$ and $t_i\ls s_i$ for every $i\in [q]$.   Let $\H_m$ be the set of all  diagrams such that $s_i\ls m$ for every $i$. For $t\in [m]$ we denote by $\gamma_t$ the function $\gamma_t: \H_m\to \NN$ given by  $\gamma_t(s_1,\ldots, s_p):=\sum_{i=1}^p \max\{0,s_i-t+1\}$.
\end{setup}


We define a partial order on the minors of $X$ in the following way:
$$ [i_1,\dots, i_t | j_1, \dots, j_t] \leqslant [e_1,\dots, e_h | f_1, \dots, f_h] \iff t\geqslant h \text{ and }i_q \leqslant e_q, \,j_q \leqslant~ f_q~ \text{ for every } q \in [h].\label{eqn.minor.order}$$
A {\it standard monomial}  is a product of minors $\underline{\delta}:=\delta_1\cdots \delta_p$ with $\delta_1\leqslant \dots \leqslant \delta_p$.
For every $\underline{\delta}$ we associate a diagram $|\underline{\delta}|:=(s_1,\ldots, s_p)\in \H_m$ where $\delta_i$ is a $s_i$-minor and call $\sigma$ the {\it shape} of $\underline{\delta}$.  Let $\B$ be the set of all standard monomials. By \cite[Theorem 2.1]{deConcini80} $\B$ forms a $\kk$-basis for $R$.
For each diagram $\sigma=(s_1,\ldots, s_p)\in \H_m$ we denote by $I_\sigma$ the product of determinantal ideals $I_\sigma:=I_{s_1}\cdots I_{s_p}$, and for any subset $\Lambda\subseteq \H_m$ we set $$I_\Lambda:=\sum_{\sigma\in \Lambda}I_\sigma.$$
For each $t\in [m]$ we extend $\gamma_t$ to a  $
\B$-monomial valuation induced by
$\gamma_t(\underline{\delta}):=\gamma_t(|\underline{\delta}|)$.
Indeed these are valuations by \cite[Theorem 7.1]{deConcini80}. 

We now show that the ideals $I_\Lambda$ have a Rees package. In the following,  by $I^{(n)}$ we denote the $n$th-symbolic power of the ideal $I$.
\begin{theorem} \label{gen_det_rees}
Adopt \autoref{setup_gen} and the notation above. Let $\Lambda\subseteq \H_m$.   If $\underline{\gamma}:=(\gamma_1,\ldots, \gamma_m)$ and $\Gamma:=\conv\big(\underline{\gamma}(\Lambda)\big)+\RR_{\gs 0}^m$,   then $(\B,\underline{\gamma}, \Gamma)$ is a Rees package for $I_\Lambda$. In particular, the Rees valuations of $I_\Lambda$ are determined by the non-coordinates hyperplanes of $\Gamma$ and  for every $w\in \QQez$, $\overline{I_\Lambda^w}$ is spanned as a $\kk$-vector space by  $\{\underline{\delta}\in \B\mid \underline{\gamma}(\underline{\delta})\in w\Gamma\}$ and $$\overline{I_\Lambda^w}=\sum_{(a_1, \dots, a_m)\in w\Gamma\cap \NN^m} \left( \bigcap_{i=1}^m I_i^{(a_i)} \right).$$
\end{theorem}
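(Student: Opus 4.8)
The plan is to verify that the triple $(\B,\underline{\gamma},\Gamma)$ meets the three hypotheses of \autoref{prop_Rees_Package}; once this is done, parts (1) and (2) of that proposition give $\RV(I_\Lambda)=\{V_1,\dots,V_r\}$ and the $\kk$-span description of $\overline{I_\Lambda^w}$, and the displayed symbolic-power formula follows formally. As a preliminary reduction I would note that $I_\sigma\subseteq I_\tau$ whenever $\tau\subseteq\sigma$ (factor by factor, $I_{s_i}\subseteq I_{t_i}$, and the surplus factors only shrink the product), so $I_\Lambda=I_{\Lambda_0}$ with $\Lambda_0$ the set of $\subseteq$-minimal diagrams of $\Lambda$, which is finite by Dickson's lemma; since each $\gamma_t$ is monotone for $\subseteq$, every $\underline{\gamma}(\sigma)$ with $\sigma\in\Lambda$ lies in $\underline{\gamma}(\tau)+\RR^m_{\gs0}$ for some $\tau\in\Lambda_0$, so $\Gamma=\conv(\underline{\gamma}(\Lambda_0))+\RR^m_{\gs0}$ is unchanged. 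Hence we may assume $\Lambda$ is finite, and then $\Gamma$ is a positive polyhedron in the sense of \autoref{def_pos_poly}.

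Next I would show that $\langle\bh,\underline{\gamma}\rangle$ is a valuation of $R$ for every $\bh\in\NN^m$, in particular for the normal vectors $\bh_k$ of the non-coordinate supporting hyperplanes of $\Gamma$; for this I appeal to \autoref{lemma_Rees_package}, so it suffices to verify its hypothesis. The key observation is that $\underline{\gamma}\colon\H_m\to\NN^m$ is injective, the multiplicities $\#\{i\mid s_i=k\}$ being recovered from $(\gamma_1(\sigma),\dots,\gamma_m(\sigma))$ by a triangular linear system. Consequently an element $f=\sum c_i\underline{\delta}_i$ on whose support $\underline{\gamma}$ is constant has all of its standard monomials $\underline{\delta}_i$ of one fixed shape, hence is homogeneous for the shape filtration of $R$. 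By the straightening law (see \cite{deConcini80}, \cite{BRUNS_VETTER}), the product of two standard monomials of shapes $\sigma$ and $\tau$ expands in the standard monomial basis as a non-cancelling term of the concatenated shape $\sigma\ast\tau$ plus terms of dominance-strictly-larger shape; equivalently, the associated graded of $R$ for the shape filtration is a domain, so the product of nonzero shape-homogeneous elements of shapes $\sigma,\tau$ has nonzero shape-$(\sigma\ast\tau)$ component. Since $\gamma_t(\rho)=\sum_i\max\{0,\rho_i-t+1\}$ is additive under concatenation, that component provides a $\fb\in\B$ appearing in $fg$ with $\underline{\gamma}(\fb)=\underline{\gamma}(\sigma)+\underline{\gamma}(\tau)=\underline{\gamma}(f)+\underline{\gamma}(g)$, which is exactly the hypothesis of \autoref{lemma_Rees_package}.

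The remaining, and most substantial, hypothesis of \autoref{prop_Rees_Package} is that $\overline{I_\Lambda^n}=\spann_\kk\{\underline{\delta}\in\B\mid\underline{\gamma}(\underline{\delta})\in n\Gamma\}$ for every $n\in\NN$. Here I would first write $I_\Lambda^n=I_{\Lambda^{[n]}}$, where $\Lambda^{[n]}:=\{\sigma^{(1)}\ast\cdots\ast\sigma^{(n)}\mid\sigma^{(j)}\in\Lambda\}$, using $I_{\sigma}I_{\tau}=I_{\sigma\ast\tau}$. Then one invokes the description of integral closures of sums of products of determinantal ideals coming from the representation-theoretic/standard-monomial machinery of \cite{deConcini80,Bruns91,bruns2022determinants}: for a single diagram, $\overline{I_\sigma}=\bigcap_{t=1}^m I_t^{(\gamma_t(\sigma))}$, i.e. $\overline{I_\sigma}$ has $\kk$-basis $\{\underline{\delta}\mid\underline{\gamma}(\underline{\delta})\gs\underline{\gamma}(\sigma)\}$. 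The inclusion $\overline{I_\Lambda^n}\supseteq\spann_\kk\{\underline{\delta}\mid\underline{\gamma}(\underline{\delta})\in n\Gamma\}$ then requires the convexity step identifying $\conv(\underline{\gamma}(\Lambda^{[n]}))$ with $n\conv(\underline{\gamma}(\Lambda))$ (by additivity of $\gamma_t$ under $\ast$) and filling in the lattice points of $n\Gamma$ not dominated by the sumset $\underline{\gamma}(\Lambda^{[n]})$; the reverse inclusion is easy given the previous paragraph, since each $V_k:=\langle\bh_k,\underline{\gamma}\rangle$ is then a valuation of $R$ with $V_k(I_\Lambda)=c_k$, so any $x\in\overline{I_\Lambda^n}$ has $V_k(x)\gs nc_k$ for all $k$, and as the $V_k$ are $\B$-monomial this forces every standard monomial occurring in $x$ into $\{\underline{\delta}\mid\underline{\gamma}(\underline{\delta})\in n\Gamma\}$. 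This integral closure computation in the general (sum) case is the step I expect to be the main obstacle: it is where the representation theory, and the correspondence with Raicu acknowledged above, genuinely enter.

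Granting the three hypotheses, \autoref{prop_Rees_Package} yields that $(\B,\underline{\gamma},\Gamma)$ is a Rees package for $I_\Lambda$, whence $\RV(I_\Lambda)=\{V_1,\dots,V_r\}$ with $V_k=\langle\bh_k,\underline{\gamma}\rangle$ and, for every $w\in\QQez$, $\overline{I_\Lambda^w}=\spann_\kk\{\underline{\delta}\mid\underline{\gamma}(\underline{\delta})\in w\Gamma\}$. Finally, for the displayed formula I would use the standard monomial basis $I_t^{(a)}=\spann_\kk\{\underline{\delta}\mid\gamma_t(\underline{\delta})\gs a\}$ to get $\bigcap_{i=1}^m I_i^{(a_i)}=\spann_\kk\{\underline{\delta}\mid\underline{\gamma}(\underline{\delta})\gs(a_1,\dots,a_m)\}$, so that $\sum_{\ba\in w\Gamma\cap\NN^m}\bigcap_{i=1}^m I_i^{(a_i)}$ is spanned by those $\underline{\delta}$ with $\underline{\gamma}(\underline{\delta})\gs\ba$ for some $\ba\in w\Gamma\cap\NN^m$; since $\underline{\gamma}(\underline{\delta})\in\NN^m$ and $w\Gamma+\RR^m_{\gs0}=w\Gamma$, that set of $\underline{\delta}$ equals $\{\underline{\delta}\mid\underline{\gamma}(\underline{\delta})\in w\Gamma\}$, so the sum equals $\overline{I_\Lambda^w}$.
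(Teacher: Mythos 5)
Your proposal is correct and follows essentially the same route as the paper: both verify the hypotheses of \autoref{prop_Rees_Package} via \autoref{lemma_Rees_package}, taking as external input the standard-monomial description of $\overline{I_\Lambda^n}$ (which the paper cites from Henriques--Varbaro, Theorems 2.2 and 2.3) and the domain property of the associated graded for the $\underline{\gamma}$-filtration (cited from Bruns--Conca--Raicu--Varbaro, Proposition 7.4.2, after reducing to $\kk=\overline{\kk}$ so that Segre products of domains are domains). Your additional remarks (reduction to finite $\Lambda$, injectivity of $\underline{\gamma}$ on $\H_m$, and the derivation of the symbolic-power formula from $I_t^{(a)}=\spann_\kk\{\underline{\delta}\mid\gamma_t(\underline{\delta})\gs a\}$) are correct fillings-in of details the paper leaves to the same references.
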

\begin{proof}
By \cite[Theorems 2.2 \& 2.3]{Henriques_Varbaro}  
 the integral closure $\overline{I_\Lambda^n}$ is spanned as a $\kk$-vector space by the set
$\{\underline{\delta}\in \B\mid \underline{\gamma}(\underline{\delta})\in n\Gamma\}$.
By \cite[Proposition 7.4.2, Equations (7.2) \& (7.3)]{bruns2022determinants} the assumption in \autoref{lemma_Rees_package}  is satisfied  by $\B$ and $\underline{\gamma}$; indeed, notice that by \cite[Remark 3.2.9]{bruns2022determinants} we may assume $\kk=\overline{\kk}$,  and under this assumption  Segre products of domains are domains. 
Therefore,   $(\B, \underline{\gamma}, \Gamma)$ is a Rees package for $I_\Lambda$  by \autoref{prop_Rees_Package} and \autoref{def_Rees_Package}. 
The last formula follows from  \cite[Theorems 2.2 \& 2.3]{Henriques_Varbaro}.
\end{proof}

%
%
%

In the next example we illustrate the previous theorem. 


\begin{example}\label{example_det}
	Let $X$ be a $2\times 3$ generic matrix and $\Lambda=\{(2),\, (1,1,1 )\}$, i.e., $I_\Lambda=I_2+ I_1^3$. Since $\underline{\gamma}(2)=(2,1)$ and $\underline{\gamma}(1,1,1)=(3,0)$  we have $\Gamma=\conv\left((2,1), (3,0)\right)+\RR^2_{\gs 0}$. The non-coordinate 
	hyperplanes of $\Gamma$ are $X_1=2$ and $X_1+X_2=3$, 
	see \autoref{fig2}.
	 Therefore the Rees valuations of $I_\Lambda$ are  $\gamma_1$ and $\gamma_1+\gamma_2$. Finally, for every $w$ we have $$\overline{I_\Lambda^w}=
	\sum_{\stackrel{a_1,a_2\in \NN}{a_1\gs 2w, a_1+a_2\gs 3w}}
	I_1^{(a_1)}\cap I_2^{(a_2)}.$$
	\begin{figure}[hbt!]
		\centering
		\subfigure{\includegraphics[height=1.6in]{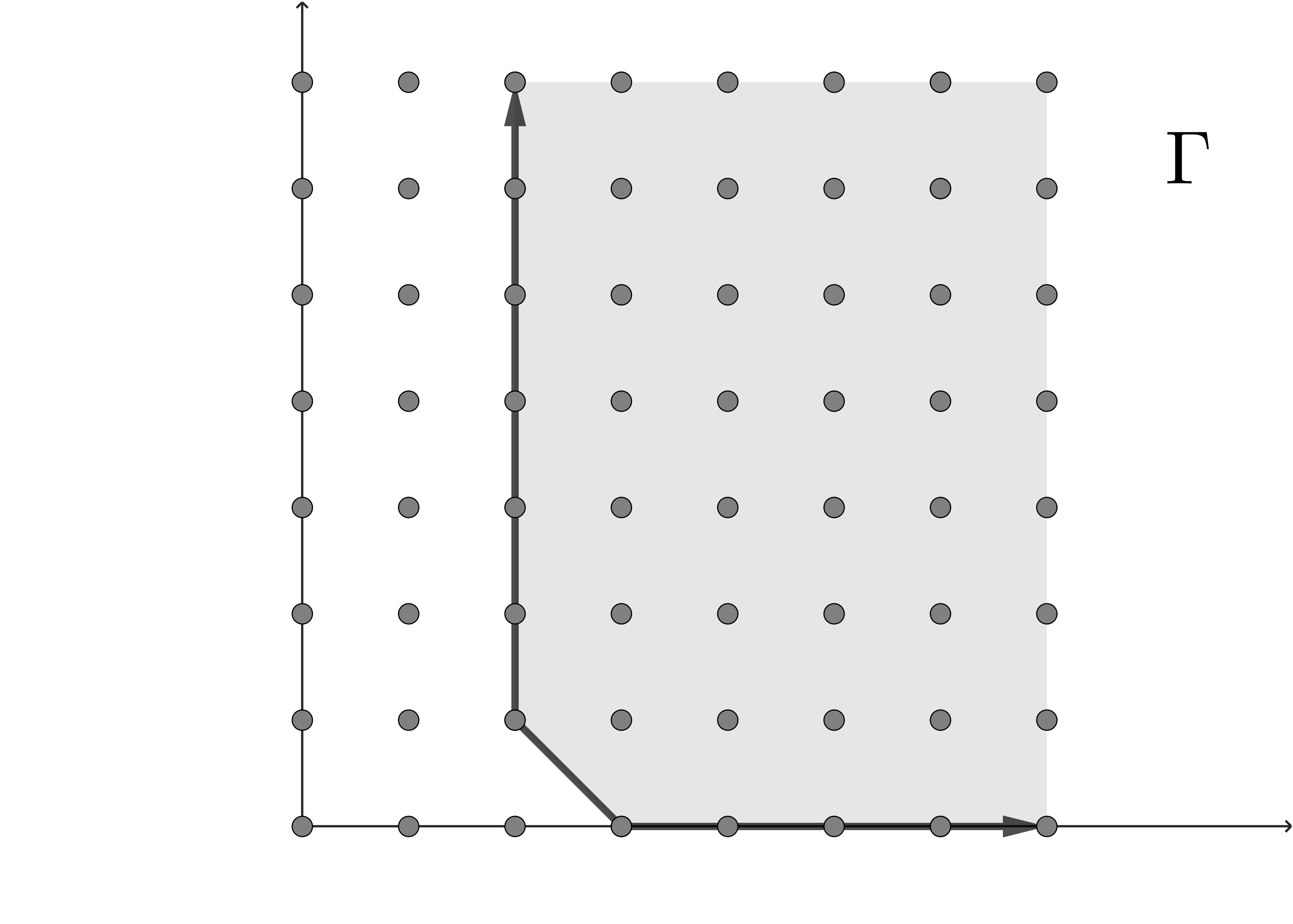}}
		\caption{The polyhedron $\Gamma=\conv\big(\underline{\gamma}(\Lambda)\big)+\RR_{\gs 0}^2=\conv\left((2,1), (3,0)\right)+\RR^2_{\gs 0}$.} \label{fig2}
	\end{figure}
\end{example}

\begin{example}\label{example_resurg}
As defined in  \cite{GHV13}, the {\it asymptotic resurgence} of a homogeneous ideal $I$ in a polynomial ring is  $\rho_a(I):=\sup\{s/r\mid I^{(st)}\not\subset I^{rt} \text{ for all }t\gg0\}$.  In \cite[Theorem 4.10]{DiPasq_et_al_19} the authors showed  that $\rho_a(I)=\max_{v\in \RV(I)}\left\{v(I)/\widehat{v}(I)  \right\}$,
where $ \displaystyle\widehat{v}(I)=\lim_{r\to \infty}v(I^{(r)})/r$. By  \autoref{gen_det_rees}, if $I_t$ is the determinantal ideal defined in \autoref{setup_gen}, then $\RV(I_t)=\{\gamma_1,\ldots, \gamma_t\}$. Using \cite[Theorem 7.1]{deConcini80} and \cite[Remark 7.23]{de2021blowup} one shows that $\rho_a(I_t)=\dfrac{\gamma_1(I_t)}{\displaystyle\widehat{\gamma_1}(I_t)}=\dfrac{t(m-t+1)}{m}$.
\end{example}

For the rest of this subsection assume $\kk=\CC$ and set $G:=\GL_m(\CC) \times \GL_n(\CC)$. Consider the action of $G$ on $R=\CC[X]$ induced by $X_{i,j}\mapsto (AXB^{-1})_{i,j}$ for $(A,B)\in G$. In \cite[Theorem 5.1]{deConcini80} the authors described certain irreducible representations $L_\sigma$ and  $\prescript{}{\sigma}{L}$ of $\GL_m(\CC)$ and $\GL_n(\CC)$, respectively, indexed by $\sigma\in \H_m$,
and  they  showed  that an ideal of $R$ is invariant under the action of $G$ if and only if it is of the form
$$\I(\Lambda):=\bigoplus_{\stackrel{\tau \subseteq \sigma}{ \text{for some }\tau\in\Lambda}} L_\sigma \otimes_\CC  \prescript{}{\sigma}{L}$$
for some subset $\Lambda\subseteq \H_m$. Moreover, in \cite[Theorem 8.2]{deConcini80} it is shown that $\overline{\I(\Lambda)}=\overline{I_\Lambda}$. Thus we obtain the following corollary.

\begin{corollary}\label{cor_inv_gen}
Under the assumptions and notations in \autoref{gen_det_rees} with $\kk=\CC$, for every $\Lambda\subseteq \H_m$ the $G$-invariant ideal $\I(\Lambda)$ has a Rees package.
\end{corollary}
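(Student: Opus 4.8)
The plan is to deduce \autoref{cor_inv_gen} directly from \autoref{gen_det_rees} together with the two facts quoted from \cite{deConcini80}, without any new hard work. The key point is that having a Rees package is, by \autoref{def_Rees_Package}, a statement about $\B$, a tuple of $\B$-monomial functions, and a positive polyhedron $\Gamma$ that controls the integral closures of \emph{all} integer powers simultaneously; and by \cite[Theorem 8.2]{deConcini80} the ideals $\I(\Lambda)$ and $I_\Lambda$ have the same integral closure — moreover $\I(\Lambda)^n$ and $I_\Lambda^n$ also share integral closures, since $\I(\Lambda)^n$ and $I_\Lambda^n$ have the same integral closure whenever $\I(\Lambda)$ and $I_\Lambda$ do (both are $G$-invariant with the same generating "diagram data" raised to the $n$-th power, or more simply because $\overline{\I(\Lambda)^n}\subseteq\overline{(\I(\Lambda))^n}$-type containments sandwich them; the cleanest route is to observe that $\I(\Lambda)\subseteq I_\Lambda\subseteq\overline{\I(\Lambda)}$, whence $\overline{\I(\Lambda)^n}=\overline{I_\Lambda^n}$ for all $n$).

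So the steps, in order, are as follows. First I would recall that $\I(\Lambda)\subseteq I_\Lambda$: indeed $I_\sigma = I_{s_1}\cdots I_{s_p}$ contains, by the straightening relations, the representation-theoretic pieces $L_\tau\otimes \prescript{}{\tau}{L}$ for $\tau\supseteq\sigma$ minus lower-order terms, and conversely each $L_\sigma\otimes\prescript{}{\sigma}{L}$ with $\sigma\supseteq\tau$ for some $\tau\in\Lambda$ lies in $I_\Lambda$; alternatively one simply cites \cite[Theorem 8.2]{deConcini80} which gives $\overline{\I(\Lambda)}=\overline{I_\Lambda}$ and also the inclusions needed. From $\I(\Lambda)\subseteq I_\Lambda\subseteq \overline{\I(\Lambda)}$ (the last because $\overline{I_\Lambda}=\overline{\I(\Lambda)}$), taking integral closures of $n$-th powers and using that integral closure is monotone and that $\overline{(\overline{J})^n}=\overline{J^n}$, I get $\overline{\I(\Lambda)^n}=\overline{I_\Lambda^n}$ for every $n\in\NN$. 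Second, I plug this into the defining property of the Rees package $(\B,\underline{\gamma},\Gamma)$ produced for $I_\Lambda$ in \autoref{gen_det_rees}: since $\overline{\I(\Lambda)^n}=\overline{I_\Lambda^n}$ is spanned as a $\CC$-vector space by $\{\underline{\delta}\in\B\mid \underline{\gamma}(\underline{\delta})\in n\Gamma\}$ for all $n$, the triple $(\B,\underline{\gamma},\Gamma)$ satisfies all the hypotheses of \autoref{prop_Rees_Package} with $I$ replaced by $\I(\Lambda)$ — the $\B$-monomial functions $\gamma_t$ and the valuation property of the $V_k$ depend only on $\B$, $\underline{\gamma}$, and $\Gamma$, not on the ideal. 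Hence $(\B,\underline{\gamma},\Gamma)$ is a Rees package for $\I(\Lambda)$, which is exactly the claim.

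I would write this up as a short paragraph: "By \cite[Theorems 5.1 and 8.2]{deConcini80} we have $\I(\Lambda)\subseteq I_\Lambda\subseteq\overline{\I(\Lambda)}$, so $\overline{\I(\Lambda)^n}=\overline{I_\Lambda^n}$ for all $n\in\NN$ by \cite[Remark 1.3.2]{huneke2006integral} (or the obvious monotonicity of integral closure together with $\overline{(\overline{J})^n}=\overline{J^n}$). Therefore the triple $(\B,\underline{\gamma},\Gamma)$ constructed in \autoref{gen_det_rees} also satisfies the hypotheses of \autoref{prop_Rees_Package} for the ideal $\I(\Lambda)$, and is thus a Rees package for it." That is essentially the whole proof.

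The main obstacle — really the only thing requiring care — is justifying the equality $\overline{\I(\Lambda)^n}=\overline{I_\Lambda^n}$ for \emph{all} $n$, not just $n=1$. The cited \cite[Theorem 8.2]{deConcini80} gives it only for $n=1$ a priori, so one must either note the sandwich $\I(\Lambda)\subseteq I_\Lambda\subseteq\overline{\I(\Lambda)}$ (which immediately upgrades to all powers) or observe that $I_\Lambda^n$ and $\I(\Lambda^{(n)})$ are again of the same two types for a suitable power-diagram set $\Lambda^{(n)}$ and apply the $n=1$ case there. The inclusion $I_\Lambda\subseteq\overline{\I(\Lambda)}$ follows formally from $\overline{I_\Lambda}=\overline{\I(\Lambda)}$ since $I_\Lambda\subseteq\overline{I_\Lambda}$; and $\I(\Lambda)\subseteq I_\Lambda$ is the standard fact that a product of determinantal ideals, written in the standard-monomial basis, contains the corresponding sum of isotypic components (its complement being spanned by standard monomials of strictly larger shape, which also lie in $I_\Lambda$). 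Everything else is a formal unwinding of \autoref{def_Rees_Package} and \autoref{prop_Rees_Package}, so no computation is needed.
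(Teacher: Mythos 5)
Your proof is correct and follows the same route the paper takes: the corollary is deduced from \autoref{gen_det_rees} via the identification of integral closures $\overline{\I(\Lambda)}=\overline{I_\Lambda}$ from \cite[Theorem 8.2]{deConcini80}, noting that the conditions in \autoref{prop_Rees_Package} depend on the ideal only through the integral closures of its powers. The paper states the corollary with no further argument, so your sandwich $\I(\Lambda)\subseteq I_\Lambda\subseteq\overline{\I(\Lambda)}$, which upgrades the equality of integral closures to all powers $n$, is a correct (and in fact necessary) elaboration of the step the paper leaves implicit.
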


\subsection{Sums of products of determinantal ideals of symmetric matrices and $\GL_n(\CC)$-invariant ideals} In this subsection we show that ideals obtained by taking sums of products of determinantal ideals in a symmetric generic matrix $Y$ have a Rees package. As a consequence we obtain that $\GL_n(\CC)$-invariant ideals of the polynomial ring $\CC[Y]$ have a Rees package. Throughout we follow the treatment  in \cite{Abeasis80,Henriques_Varbaro,JEFFRIES_MONTANO_VARBARO}.

\begin{setup}\label{setup_sym}
	Let $Y$ be an $n\times n$ generic symmetric matrix. Let $\kk$ be a field and $R=\kk[Y]$ the corresponding polynomial ring. For any $t\ls n$ let $J_t\subset R$ denote the ideal generated by the $t$-minors of $Y$. We follow the notation introduced in \autoref{setup_gen} for minors and diagrams.
\end{setup}

A minor
$ [i_1,\dots, i_t | j_1, \dots, j_t]$ of $Y$ is {\it doset} if $i_q\ls j_q$ for every $q\in [t]$. We define a partial order on the doset minors in the following way:
$$ [i_1,\dots, i_t | j_1, \dots, j_t] \leqslant [e_1,\dots, e_h | f_1, \dots, f_h] \iff t\geqslant h \text{ and }j_q \leqslant e_q, ~ \text{ for every } q \in [h].\label{eqn.doset.order}$$
A {\it standard monomial}  is a product of doset  minors $\underline{\delta}:=\delta_1\cdots \delta_p$ with $\delta_1\leqslant \dots \leqslant \delta_p$.
 For every $\underline{\delta}$ we associate a diagram $|\underline{\delta}|:=(s_1,\ldots, s_p)\in \H_n$ where $\delta_i$ is a $s_i$-minor and call $\sigma$ the {\it shape} of $\underline{\delta}$.  Let $\B$ be the set of all standard monomials. By \cite[\S 4.2]{JEFFRIES_MONTANO_VARBARO} (see also \cite{Abeasis80}) $\B$ forms a $\kk$-basis for $R$.
For each $\sigma=(s_1,\ldots, s_p)\in \H_n$ we denote by $J_\sigma$ the product of determinantal ideals $J_\sigma:=J_{s_1}\cdots J_{s_p}$, and for any subset $\Lambda\subseteq \H_n$ we set $$J_\Lambda:=\sum_{\sigma\in \Lambda}J_\sigma.$$
For each $t\in [n]$ we extend $\gamma_t$ to a  $
\B$-monomial valuation induced by
$\gamma_t(\underline{\delta}):=\gamma_t(|\underline{\delta}|)$.
Indeed these are valuations by \cite[Theorem 5.1]{Abeasis80}, or \cite[Theorem 2.7]{Henriques_Varbaro}.

\begin{lemma}\label{lem:sym_domain}
Adopt \autoref{setup_sym} and the notation above. Set $\underline{\gamma}:=(\gamma_1,\ldots, \gamma_n)$. For every $\bn\in \NN^n$ let $V_\bn$ (resp. $V_{>\bn}$) be the $\kk$-subspace of $R$ spanned by the standard monomials $\underline{\delta}$ such that $\underline{\gamma}(\underline{\delta})=\bn$ (resp. $\underline{\gamma}(\underline{\delta})>\bn$ componentwise). Then the associated graded ring $\G=\oplus_{\bn\in \NN^n}V_\bn/V_{>\bn}$ is a domain.
\end{lemma}
\begin{proof}
Let $X$ be a generic $n\times n$ matrix and $S$ be the subalgebra of $\kk[X]$ generated by the products of minors $[1,\ldots, t_1|i_1,\ldots, i_{t_1}][1,\ldots, t_2|i_1,\ldots, i_{t_2}]$ for some $1\ls i_1,i_2\ls n$. One proceeds as in \cite[proof of Thorem 1.6]{baetica01} via \cite[Lemma 5.2 and proof of Lemma 5.3]{deConcini76} to show $\G\cong S$. The conclusion follows. 
\end{proof}

We now show that the ideals $J_\Lambda$ have a Rees package.
\begin{theorem} \label{gen_sym_rees}
	Adopt \autoref{setup_sym} and the notation above. Let $\Lambda\subseteq \H_n$.   If $\underline{\gamma}:=(\gamma_1,\ldots, \gamma_n)$ and $\Gamma:=\conv\big(\underline{\gamma}(\Lambda)\big)+\RR_{\gs 0}^n$,   then $(\B,\underline{\gamma}, \Gamma)$ is a Rees package for $J_\Lambda$. In particular, the Rees valuations of $J_\Lambda$ are determined by the non-coordinates hyperplanes of $\Gamma$ and  for every $w\in \QQez$, $\overline{J_\Lambda^w}$ is spanned as a $\kk$-vector space by  $\{\underline{\delta}\in \B\mid \underline{\gamma}(\underline{\delta})\in w\Gamma\}$ and $$\overline{J_\Lambda^w}=\sum_{(a_1, \dots, a_n)\in w\Gamma\cap \NN^n} \left( \bigcap_{i=1}^n J_i^{(a_i)} \right).$$
\end{theorem}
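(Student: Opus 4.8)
The plan is to follow exactly the template established in the proof of \autoref{gen_det_rees}, since the symmetric case is structurally parallel: the standard monomials $\B$ form a $\kk$-basis, the functions $\gamma_t$ are $\B$-monomial valuations, and the combinatorics of diagrams is the same. First I would invoke the known description of the integral closures of the powers: by \cite[Theorems 2.6 \& 2.7]{Henriques_Varbaro} (the symmetric analogue of the generic statement), the integral closure $\overline{J_\Lambda^n}$ is spanned as a $\kk$-vector space by $\{\underline{\delta}\in \B\mid \underline{\gamma}(\underline{\delta})\in n\Gamma\}$, where $\Gamma=\conv\big(\underline{\gamma}(\Lambda)\big)+\RR_{\gs 0}^n$. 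This immediately supplies the positive polyhedron hypothesis of \autoref{prop_Rees_Package}, and $\Gamma(J_\Lambda^n)=n\Gamma$ follows from the additivity $\gamma_t(\underline{\delta}\cdot\underline{\delta}')=\gamma_t(\underline{\delta})+\gamma_t(\underline{\delta}')$ on shapes.

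Next I would verify the hypothesis of \autoref{lemma_Rees_package} for the non-coordinate supporting hyperplanes of $\Gamma$: namely, that for $f,g$ which are $\kk$-linear combinations of standard monomials all sharing the same $\underline{\gamma}$-value, the product $fg$ has in its standard-monomial expansion some basis element $\fb$ with $\underline{\gamma}(\fb)=\underline{\gamma}(f)+\underline{\gamma}(g)$. This is the symmetric-matrix counterpart of the straightening-law computation used in the generic case; the relevant input is the description of multiplication of minors modulo lower terms in the doset/standard-monomial theory — I would cite the analogue of \cite[Proposition 7.4.2]{bruns2022determinants} in the symmetric setting (available through \cite{Henriques_Varbaro} or the classical work of Abeasis \cite{Abeasis80}), again reducing to $\kk=\overline{\kk}$ so that the relevant Segre-type products are domains. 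With both hypotheses in hand, \autoref{prop_Rees_Package} yields that $(\B,\underline{\gamma},\Gamma)$ is a Rees package for $J_\Lambda$: part (1) gives $\RV(J_\Lambda)=\{V_1,\ldots,V_r\}$ for the non-coordinate hyperplanes, and part (2) gives the spanning description of $\overline{J_\Lambda^w}$ for all $w\in\QQez$.

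Finally, for the displayed formula $\overline{J_\Lambda^w}=\sum_{(a_1,\ldots,a_n)\in w\Gamma\cap\NN^n}\big(\bigcap_{i=1}^n J_i^{(a_i)}\big)$, I would again appeal to \cite{Henriques_Varbaro}: their results identify the $\kk$-span of $\{\underline{\delta}\in\B\mid\underline{\gamma}(\underline{\delta})\gs\ba \text{ componentwise}\}$ with $\bigcap_i J_i^{(a_i)}$ (the symbolic powers of the symmetric determinantal ideals have a standard-monomial basis governed by the $\gamma_i$), and then sum over the lattice points of $w\Gamma$ using the part (2) description.

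The main obstacle is the second step: confirming that the straightening law in the symmetric (doset) case still produces a ``corner'' basis element with the expected $\underline{\gamma}$-value when one multiplies two homogeneous-in-$\underline{\gamma}$ elements. In the generic case this is packaged cleanly in \cite{bruns2022determinants}; in the symmetric case one must either locate the analogous statement in the literature or extract it from the doset straightening relations, and one must be careful that passing to a Minkowski-sum vertex $\ba=\ba_f+\ba_g$ (as in the proof of \autoref{lemma_Rees_package}) interacts correctly with the doset order. Everything else is a routine transcription of the generic argument.
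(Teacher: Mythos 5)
Your proposal follows essentially the same route as the paper's proof: cite \cite[Theorems 2.6 \& 2.7]{Henriques_Varbaro} for the spanning description of $\overline{J_\Lambda^n}$, verify the hypothesis of \autoref{lemma_Rees_package} for the non-coordinate hyperplanes, conclude via \autoref{prop_Rees_Package}, and derive the final symbolic-power formula again from \cite{Henriques_Varbaro}. The one input you correctly flag as the main obstacle --- the doset straightening-law fact producing a standard monomial of the expected $\underline{\gamma}$-value in a product --- is supplied in the paper by citing \cite[Lemma 5.2]{deConcini76}, with no need for the Segre-product/algebraic-closure detour used in the generic case.
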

\begin{proof}
 By \cite[Theorems 2.6 \& 2.7]{Henriques_Varbaro}  
 the integral closure $\overline{J_\Lambda^n}$ is spanned as a $\kk$-vector space by the set
 $\{\underline{\delta}\in \B\mid \underline{\gamma}(\underline{\delta})\in n\Gamma\}$. 
 By \autoref{lem:sym_domain} the assumption in \autoref{lemma_Rees_package}  is satisfied   by $\B$ and $\underline{\gamma}$.  
 Therefore,   $(\B, \underline{\gamma}, \Gamma)$ is a Rees package for $J_\Lambda$  by \autoref{prop_Rees_Package} and \autoref{def_Rees_Package}. 
 The last formula follows again from  \cite[Theorems 2.6 \& 2.7]{Henriques_Varbaro}.  
\end{proof}

For the rest of this subsection assume $\kk=\CC$ and set $G:=\GL_n(\CC)$. Consider the action of $G$ on $R=\CC[Y]$ induced by $Y_{i,j}\mapsto (AYA^{t})_{i,j}$ for $A\in G$. In \cite{Abeasis80} the author described certain irreducible representations $L_\sigma$ of $G$ indexed by $\sigma\in \H_n$,
and  they  showed  that an ideal of $R$ is invariant under the action of $G$ if and only if it is of the form
$$\J(\Lambda):=\bigoplus_{\stackrel{\tau \subseteq \sigma}{ \text{for some }\tau\in\Lambda}} L_\sigma $$
for some subset $\Lambda\subseteq \H_n$. Moreover,  in \cite[Theorem 6.1]{Abeasis80} (see also \cite[Theorem 2.11]{Henriques_Varbaro}) it  is shown  that $\overline{\J(\Lambda)}=\overline{J_\Lambda}$. Thus we obtain the following corollary.

\begin{corollary}\label{cor_inv_sym}
	Under the assumptions and notations in \autoref{gen_sym_rees} with $\kk=\CC$, for every $\Lambda\subseteq \H_n$ the $G$-invariant ideal $\J(\Lambda)$ has a Rees package.
\end{corollary}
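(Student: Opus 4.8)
The plan is to reduce the statement to \autoref{gen_sym_rees}, exactly as the analogous \autoref{cor_inv_gen} in the generic case follows from \autoref{gen_det_rees}. The key observation is that whether a triple $(\B,\underline v,\Gamma)$ is a Rees package for an ideal $I$ depends on $I$ only through the integral closures $\overline{I^n}$ of its powers: unwinding \autoref{def_Rees_Package} and \autoref{prop_Rees_Package}, the requirements that $\underline v$ be $\B$-monomial and that the non-coordinate hyperplanes of $\Gamma$ define valuations of $R$ involve only $\B$, $\underline v$, and $\Gamma$ — and these were already verified in the proof of \autoref{gen_sym_rees} for $\underline\gamma=(\gamma_1,\ldots,\gamma_n)$ and $\Gamma=\conv(\underline\gamma(\Lambda))+\RR_{\gs 0}^n$ — while the only condition mentioning the ideal is that $\overline{I^n}$ be spanned as a $\kk$-vector space by $\{\underline\delta\in\B\mid \underline\gamma(\underline\delta)\in n\Gamma\}$ for every $n\in\NN$. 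So it suffices to show $\overline{\J(\Lambda)^n}=\overline{J_\Lambda^n}$ for all $n\in\NN$; then \autoref{gen_sym_rees} identifies the right-hand side with the required span and, via \autoref{prop_Rees_Package}, also supplies the descriptions of $\RV(\J(\Lambda))$ and of $\overline{\J(\Lambda)^w}$ for $w\in\QQez$.

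For this I would first recall the standard fact that $\overline{K^n}=\overline{(\overline K)^n}$ for every ideal $K$ in a domain and every $n\in\NN$: from $K^n\subseteq(\overline K)^n$ one gets $\overline{K^n}\subseteq\overline{(\overline K)^n}$, and from $(\overline K)^n\subseteq\overline{K^n}$ (products of elements integral over $I$ and over $J$ are integral over $IJ$) one gets $\overline{(\overline K)^n}\subseteq\overline{\overline{K^n}}=\overline{K^n}$. Equivalently, $R[Kt]\subseteq R[\overline K t]$ is an integral extension of subalgebras of $R[t]$, hence they share the same integral closure in $R[t]$, and comparing degree-$n$ components yields the identity (cf. \cite[Proposition 5.2.1]{huneke2006integral}). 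Applying this with $K=\J(\Lambda)$ and then with $K=J_\Lambda$, and using $\overline{\J(\Lambda)}=\overline{J_\Lambda}$ from \cite[Theorem 6.1]{Abeasis80} (see also \cite[Theorem 2.11]{Henriques_Varbaro}), we obtain
\[
\overline{\J(\Lambda)^n}=\overline{\bigl(\overline{\J(\Lambda)}\bigr)^n}=\overline{\bigl(\overline{J_\Lambda}\bigr)^n}=\overline{J_\Lambda^n}
\]
for every $n\in\NN$, which is precisely what was needed. Combined with \autoref{gen_sym_rees}, this shows that $(\B,\underline\gamma,\Gamma)$ is a Rees package for $\J(\Lambda)$.

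I do not anticipate a genuine obstacle here: the entire content is that a Rees package is an invariant of the integral-closure filtration, together with the already-recorded equality of integral closures. The one point to phrase with care is the identity $\overline{K^n}=\overline{(\overline K)^n}$ and its precise citation; everything else is bookkeeping against \autoref{gen_sym_rees}.
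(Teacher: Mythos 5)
Your proposal is correct and matches the paper's intended argument: the paper deduces the corollary directly from the equality $\overline{\J(\Lambda)}=\overline{J_\Lambda}$ of \cite[Theorem 6.1]{Abeasis80}, leaving implicit exactly the two points you spell out — that a Rees package depends on the ideal only through the filtration $\{\overline{I^n}\}_{n}$, and that $\overline{K^n}=\overline{(\overline{K})^n}$ upgrades the equality of integral closures to all powers. Your write-up is a careful expansion of the same route, not a different one.
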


\subsection{Sums of products of  ideals of Pfaffians  and $\GL_n(\CC)$-invariant ideals} In this subsection we show that ideals obtained by taking sums of products of ideals of Pfaffians in a skew-symmetric generic matrix $Z$ have a Rees package. As a consequence we obtain that $\GL_n(\CC)$-invariant ideals of the polynomial ring $\CC[Z]$ have a Rees package. Throughout we follow the treatment  in \cite{Abeasis80Young,Henriques_Varbaro,JEFFRIES_MONTANO_VARBARO}.

\begin{setup}\label{setup_pff}
	Let $Z$ be an $n\times n$ skew-symmetric matrix of variables ($Z=-Z^t$). We follow the notation introduced in \autoref{setup_gen}  for minors and diagrams. Let $\kk$ be a field and $R=\kk[Z]$ the corresponding polynomial ring.  For any $t\ls \lfloor n/2\rfloor$ a minor
	of $Z$ of the form $ [i_1,\dots, i_{2t} | i_1, \dots, i_{2t}]$ is a square of a polynomial which is called a {\it $2t$-Pfaffian}. We will denote this Pfaffian as $ [i_1,\dots, i_{2t}]$. Let $P_{2t}\subset R$ be the ideal generated by the $2t$-Pfaffians of $Z$.
\end{setup}

 We define a partial order on Pfaffians in the following way:
$$ [i_1,\dots, i_{2t}] \leqslant [e_1,\dots, e_{2h}] \iff t\geqslant h \text{ and }i_q \leqslant e_q, ~ \text{ for every } q \in [2h].\label{eqn.Pf.order}$$
A {\it standard monomial} in this case is a product of Pfaffians $\underline{\delta}:=\delta_1\cdots \delta_p$ with $\delta_1\leqslant \dots \leqslant \delta_p$.
For every $\underline{\delta}$ we associate a diagram $|\underline{\delta}|:=(s_1,\ldots, s_p)\in  \H_{\lfloor n/2\rfloor}$ where $\delta_i$ is a $2s_i$-Pfaffian and call $\sigma$ the {\it shape} of $\underline{\delta}$.   Let $\B$ be the set of all standard monomials. By \cite[\S 4.3]{JEFFRIES_MONTANO_VARBARO} (see also \cite{Abeasis80Young}) $\B$ forms a $\kk$-basis for $R$.
For each $\sigma=(s_1,\ldots, s_p)\in \H_{\lfloor n/2\rfloor}$ we denote by $J_\sigma$ the product of  ideals of Pfaffians $P_\sigma:=P_{2s_1}\cdots P_{2s_p}$, and for any subset $\Lambda\subseteq \H_{\lfloor n/2\rfloor}$ we set $$P_\Lambda:=\sum_{\sigma\in \Lambda}P_\sigma.$$
For each $t\in  \big[\lfloor n/2 \rfloor\big]$ we extend $\gamma_t$ to a  $
\B$-monomial valuation induced by
$\gamma_t(\underline{\delta}):=\gamma_t(|\underline{\delta}|)$.
Indeed these are valuations by \cite[Theorem 5.1]{Abeasis80Young},  or \cite[Theorem 2.10]{Henriques_Varbaro}.

We now show that the ideals $P_\Lambda$ have a Rees package.
\begin{theorem} \label{gen_sym_pff}
	Adopt \autoref{setup_pff} and the notation above. Let $\Lambda\subseteq \H_{\lfloor n/2\rfloor}$.   If $\underline{\gamma}:=(\gamma_1,\ldots, \gamma_{\lfloor n/2\rfloor})$ and $\Gamma:=\conv\big(\underline{\gamma}(\Lambda)\big)+\RR_{\gs 0}^{\lfloor n/2\rfloor}$,   then $(\B,\underline{\gamma}, \Gamma)$ is a Rees package for $P_\Lambda$. In particular, the Rees valuations of $P_\Lambda$ are determined by the non-coordinates hyperplanes of $\Gamma$ and  for every $w\in \QQez$, $\overline{P_\Lambda^w}$ is spanned as a $\kk$-vector space by  $\{\underline{\delta}\in \B\mid \underline{\gamma}(\underline{\delta})\in w\Gamma\}$ and $$\overline{P_\Lambda^w}=\sum_{\left(a_1, \dots, a_{\lfloor n/2\rfloor}\right)\in w\Gamma\cap  \NN^{\lfloor n/2\rfloor}} \left( \bigcap_{i=1}^{\lfloor n/2\rfloor} P_{2i}^{(a_i)} \right).$$
\end{theorem}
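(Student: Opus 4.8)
The plan is to follow the template used for \autoref{gen_det_rees} and \autoref{gen_sym_rees}: verify the two hypotheses of \autoref{prop_Rees_Package} for the triple $(\B,\underline{\gamma},\Gamma)$, promoting the $\B$-monomial functions attached to the non-coordinate supporting hyperplanes of $\Gamma$ to honest valuations of $R$ by means of \autoref{lemma_Rees_package}.

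First I would record the vector-space description of the integral closures of the ordinary powers: by the Pfaffian counterpart of \cite[Theorems 2.6 \& 2.7]{Henriques_Varbaro} (see also \cite{Abeasis80Young}), for every $n\in\NN$ the integral closure $\overline{P_\Lambda^n}$ is spanned as a $\kk$-vector space by $\{\underline{\delta}\in\B\mid\underline{\gamma}(\underline{\delta})\in n\Gamma\}$, and moreover $\overline{P_\Lambda^n}=\sum_{(a_1,\dots,a_{\lfloor n/2\rfloor})\in n\Gamma\cap\NN^{\lfloor n/2\rfloor}}\bigcap_i P_{2i}^{(a_i)}$. Since each $\gamma_t$ is additive on shapes of products of standard monomials, $n\Gamma$ is indeed the polyhedron that enters \autoref{prop_Rees_Package} at the $n$-th power.

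The main point is then to verify the multiplicativity hypothesis of \autoref{lemma_Rees_package} for the non-coordinate supporting hyperplanes $H_k\colon\langle\bh_k,\fX\rangle=c_k$ of $\Gamma$. Given $f=\sum c_i\underline{\delta}_i$ and $g=\sum e_j\underline{\epsilon}_j$ in $R$ with all $\underline{\gamma}(\underline{\delta}_i)$ equal and all $\underline{\gamma}(\underline{\epsilon}_j)$ equal, one must produce a standard monomial $\underline{\eta}$ appearing in the straightening expansion of $fg$ with $\underline{\gamma}(\underline{\eta})=\underline{\gamma}(f)+\underline{\gamma}(g)$. As in the proof of \autoref{lemma_Rees_package}, this reduces to choosing a vertex $\ba$ of the Minkowski sum $\cP_f+\cP_g$ of the shape polytopes, descending to single standard monomials $\underline{\delta},\underline{\epsilon}$ whose shapes satisfy $\underline{\gamma}(\underline{\delta})+\underline{\gamma}(\underline{\epsilon})=\ba$, and exhibiting in the expansion of $\underline{\delta}\,\underline{\epsilon}$ a standard monomial whose shape is the diagram obtained by merging and re-sorting the two shapes. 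For the generic and symmetric cases this is \cite[Proposition 7.4.2, Equations (7.2) \& (7.3)]{bruns2022determinants} and \cite[Lemma 5.2]{deConcini76}; here I would invoke the corresponding Pfaffian straightening law and plethysm recorded in \cite{Abeasis80Young} and \cite{JEFFRIES_MONTANO_VARBARO}, first reducing to $\kk=\overline{\kk}$ (as in \autoref{gen_det_rees}) so that the relevant Segre-type products of Pfaffian rings are domains.

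With both hypotheses in hand, \autoref{lemma_Rees_package} shows each $V_k=\sum_j h_{k,j}\gamma_j$ is a valuation, \autoref{prop_Rees_Package} gives $\RV(P_\Lambda)=\{V_1,\dots,V_r\}$ and the $\kk$-span description of $\overline{P_\Lambda^w}$ for all $w\in\QQez$, and comparing this span with the symbolic-power identity of the first step — which is insensitive to integrality of the exponent exactly as in \autoref{discrete_local} — yields $\overline{P_\Lambda^w}=\sum_{(a_i)\in w\Gamma\cap\NN^{\lfloor n/2\rfloor}}\bigcap_i P_{2i}^{(a_i)}$. I expect the third paragraph to be the main obstacle: the Pfaffian order depends only on the trailing indices, so some care is needed to ensure that additivity of the $\gamma_t$ at a Minkowski vertex genuinely produces a \emph{standard} monomial of the predicted shape, rather than one straightening to a strictly larger diagram in the $\gamma$-order; this should follow from the precise form of the Pfaffian multiplication rules in \cite{JEFFRIES_MONTANO_VARBARO} and \cite{Abeasis80Young}, paralleling the use of \cite[Proposition 7.4.2]{bruns2022determinants} and \cite[Lemma 5.2]{deConcini76} in the other two cases.
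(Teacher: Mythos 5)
Your proposal follows the paper's proof exactly: both first record that $\overline{P_\Lambda^n}$ is spanned by $\{\underline{\delta}\in\B\mid \underline{\gamma}(\underline{\delta})\in n\Gamma\}$ via \cite[Theorems 2.9 \& 2.10]{Henriques_Varbaro}, then verify the multiplicativity hypothesis of \autoref{lemma_Rees_package} for the non-coordinate hyperplanes, and conclude with \autoref{prop_Rees_Package}. The only difference is in sourcing the multiplicativity step, which you correctly identify as the crux: the paper cites \cite[Lemma 6.2 \& proof of Theorem 6.5]{deConcini76} together with \cite[Theorem 1.6]{baetica01} for the Pfaffian product rule, rather than deriving it from the straightening data in \cite{Abeasis80Young} and \cite{JEFFRIES_MONTANO_VARBARO}.
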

\begin{proof}
 By \cite[Theorems 2.9 \& 2.10]{Henriques_Varbaro}  
the integral closure $\overline{P_\Lambda^n}$ is spanned as a $\kk$-vector space by the set
$\{\underline{\delta}\in \B\mid \underline{\gamma}(\underline{\delta})\in n\Gamma\}$.
By \cite[Theorem 1.6 and its proof]{baetica01}  (see   \cite[Lemma 6.2 \& proof of Theorem 6.5]{deConcini76} to remove the assumption on the characteristic)  the assumption in \autoref{lemma_Rees_package}  is satisfied   by $\B$ and $\underline{\gamma}$.  
Therefore,   $(\B, \underline{\gamma}, \Gamma)$ is a Rees package for $P_\Lambda$  by \autoref{prop_Rees_Package} and \autoref{def_Rees_Package}. 
The last formula follows again from  \cite[Theorems 2.9 \& 2.10]{Henriques_Varbaro}.  
\end{proof}

For the rest of this subsection assume $\kk=\CC$ and set $G:=\GL_n(\CC)$. Consider the action of $G$ on $R=\CC[Z]$ induced by $Z_{i,j}\mapsto (AZA^{t})_{i,j}$ for $A\in G$. In \cite{Abeasis80Young} the authors described certain irreducible representations $L_\sigma$ of $G$ indexed by $\sigma\in \H_{\lfloor n/2\rfloor}$,
and  they  showed  that an ideal of $R$ is invariant under the action of $G$ if and only if it is of the form
$$\P(\Lambda):=\bigoplus_{\stackrel{\tau \subseteq \sigma}{ \text{for some }\tau\in\Lambda}} L_\sigma $$
for some subset $\Lambda\subseteq \H_{\lfloor n/2\rfloor}$. Moreover,  in \cite[Theorem 6.2]{Abeasis80Young}  it is shown  that $\overline{\P(\Lambda)}=\overline{P_\Lambda}$. Thus we obtain the following corollary.

\begin{corollary}\label{cor_inv_Pf}
	Under the assumptions and notations in \autoref{gen_sym_pff} with $\kk=\CC$, for every $\Lambda\subseteq \H_{\lfloor n/2\rfloor}$ the $G$-invariant ideal $\P(\Lambda)$ has a Rees package.
\end{corollary}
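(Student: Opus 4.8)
The plan is to verify that the very triple $(\B,\underline{\gamma},\Gamma)$ constructed for $P_\Lambda$ in \autoref{gen_sym_pff} is also a Rees package for $\P(\Lambda)$. Since \autoref{gen_sym_pff} already establishes that, for every $n\in\NN$, the ideal $\overline{P_\Lambda^n}$ is spanned as a $\kk$-vector space by $\{\underline{\delta}\in\B\mid\underline{\gamma}(\underline{\delta})\in n\Gamma\}$, by \autoref{def_Rees_Package} it suffices to prove
$$\overline{\P(\Lambda)^n}=\overline{P_\Lambda^n}\qquad\text{for every }n\in\NN.$$

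To obtain this I would use the single class-specific fact $\overline{\P(\Lambda)}=\overline{P_\Lambda}$, proved in \cite[Theorem 6.2]{Abeasis80Young}, together with the general principle that two ideals of $R$ with equal integral closures also satisfy $\overline{A^n}=\overline{B^n}$ for all $n$. The latter is elementary: if $A,B\subseteq R$ satisfy $\overline{A}=\overline{B}$, then $A\subseteq\overline{B}$, hence $A^n\subseteq(\overline{B})^n\subseteq\overline{B^n}$, where the second inclusion is the standard $\overline{C}\,\overline{D}\subseteq\overline{CD}$ applied repeatedly (which follows at once from the valuative criterion \cite[Theorem 6.8.3(3)]{huneke2006integral}, since $v(xy)=v(x)+v(y)\geq v(C)+v(D)=v(CD)$ for every discrete valuation $v$ of $R$); taking integral closures and using that integral closure is idempotent gives $\overline{A^n}\subseteq\overline{B^n}$, and symmetry gives equality. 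Applying this with $A=\P(\Lambda)$ and $B=P_\Lambda$ yields the displayed identity, the case $n=0$ being trivial. This finishes the proof.

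I do not foresee a real obstacle. The substantive work — matching the $G$-invariant ideal $\P(\Lambda)$ with the standard sum-of-products ideal $P_\Lambda$ up to integral closure — has already been carried out in \cite{Abeasis80Young}, and everything else is the formal argument above. This is exactly the reasoning behind \autoref{cor_inv_gen} and \autoref{cor_inv_sym}, only with a different reference supplying the equality of first integral closures. The one point worth making explicit — and the reason a bare citation of \cite{Abeasis80Young} is not by itself enough — is that a Rees package constrains all powers simultaneously, so one genuinely needs the passage from $n=1$ to arbitrary $n$, which is supplied by the elementary principle invoked above.
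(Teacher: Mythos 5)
Your proposal is correct and follows essentially the same route as the paper, which simply cites $\overline{\P(\Lambda)}=\overline{P_\Lambda}$ from \cite[Theorem 6.2]{Abeasis80Young} and deduces the corollary from \autoref{gen_sym_pff}. The only difference is that you make explicit the passage from equality of first integral closures to $\overline{\P(\Lambda)^n}=\overline{P_\Lambda^n}$ for all $n$ (via $\overline{A}=\overline{B}\Rightarrow A^n\subseteq(\overline{B})^n\subseteq\overline{B^n}$ and idempotence of integral closure), a step the paper leaves implicit; your filling it in is sound and arguably an improvement in rigor.
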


\subsection{Products of determinantal ideals of Hankel matrices}
 In this subsection we show that products of determinantal ideals  of Hankel matrices have a Rees package. Throughout we follow the treatment  in \cite{Conca98Hankel}.

 \begin{setup}\label{setup_Hank}
 	Let $\kk$ be a field and $R=\kk[\underline{x}]=\kk[x_1,\ldots,x_n]$ a polynomial ring. Fix $c\in [n]$ and let
     $W$ be the $c\times (n-c+1)$ Hankel matrix
 $$
 W:=W_c=\begin{pmatrix}
 	x_1 & x_2 & x_3 &\dots &  x_{n-c+1}\\
 	x_2 & x_3 &  \dots & \dots & \dots \\
 	x_3 & \dots & \dots & \dots & \dots \\
 	\vdots& \vdots & \vdots & \vdots & \vdots\\
 	x_c & \dots & \dots & \dots & x_n
 \end{pmatrix}.
 $$
  For any $t\ls\min\{c,n-c+1\}$ let $H_t\subset R$ denote the ideal generated by the $t$-minors of $W$.
   The ideals $H_t$ only depends on $t$ and $n$ \cite[Corollary 2.2]{Conca98Hankel}.
    Note that the maximum value for $t$ is $\lfloor (n+1)/2\rfloor$.
  We follow the notation introduced in \autoref{setup_gen} for minors and diagrams.
 \end{setup}

A minor of $W$ of the form $[1, \dots, t| a_1,  \dots, a_t]$ is a {\it maximal minor} and it will be  denoted by $[a_1, \dots, a_t]$. A monomial $x_{a_1}\cdots x_{a_t}$ of $R$ is a {\it $<_1$-chain} if  $a_1 <_1 a_2 \dots <_1 a_s$, where $a<_1b $ means $ a+1 <b$.
There is a bijective map $\left\{<_1 \text{chains of } R \right\} \leftrightarrow \{ \text{maximal minors}\}$
defined by $\phi(x_{a_1}\dots x_{a_t})=[a_1, a_2-1, \dots, a_t-t+1]$ 
which induces an injective map $$\Phi: \{\text{monomials of } R \}\to \{\text{product of maximal minors}\};$$
see \cite{Conca98Hankel} for details.
The elements in the image $\B$ of
$\Phi$ are  the \emph{standard monomials} of $W$.
 By \cite[Theorem 1.2]{Conca98Hankel}  $\B$ forms a $\kk$-basis for $R$.
For every   standard monomial $\underline{\delta}:=\delta_1\cdots \delta_p$ with  $\delta_i$  a $s_i$-maximal-minor and $s_1\gs \cdots \gs s_p$,  we associate a diagram $|\underline{\delta}|:=(s_1,\ldots, s_p)\in \H_{\lfloor(n+1)/2\rfloor}$.
For each $\sigma=(s_1,\ldots, s_p)\in \H_{\lfloor(n+1)/2\rfloor}$ we denote by $H_\sigma$ the product of determinantal ideals $$H_\sigma:=H_{s_1}\cdots H_{s_p}.$$
For each $t\in [n]$ we extend $\gamma_t$ to a  $
\B$-monomial valuation induced by
$\gamma_t(\underline{\delta}):=\gamma_t(|\underline{\delta}|)$.
Indeed these are valuations by \cite[Theorem 3.8]{Conca98Hankel}.

We now show that the ideals $H_\sigma$ have a Rees package.

\begin{theorem} \label{gen_Hank}
	Adopt \autoref{setup_Hank} and the notation above.  Let $\sigma=(s_1,\ldots,s_p)\in  \H_{\lfloor(n+1)/2\rfloor}$.   If $\underline{\gamma}:=(\gamma_1,\ldots, \gamma_n)$ and
	$\Gamma:=\underline{\gamma}(\sigma)+\RR_{\gs 0}^n$,   then $(\B,\underline{\gamma}, \Gamma)$ is a Rees package for $H_\sigma$. In particular, the Rees valuations of $H_\sigma$ are determined by the non-coordinates hyperplanes of $\Gamma$ and  $\RV(H_\sigma)\subseteq \{\gamma_1,\ldots, \gamma_{s_1}\}$. Furthermore,   for every $w\in \QQez$, $\overline{H_\sigma^w}$ is spanned as a $\kk$-vector space by  $\{\underline{\delta}\in \B\mid \underline{\gamma}(\underline{\delta})\in w\Gamma\}$ and  $$\overline{H_\sigma^w}= \bigcap_{i=1}^{n } H_i^{(\lceil w\gamma_i(\sigma)\rceil)}.$$
\end{theorem}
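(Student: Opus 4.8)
The plan is to invoke \autoref{prop_Rees_Package} directly, with the needed inputs supplied by Conca's straightening law for products of maximal minors of Hankel matrices \cite{Conca98Hankel}; the argument is in fact lighter than for \autoref{gen_det_rees}, \autoref{gen_sym_rees}, and \autoref{gen_sym_pff}, and \autoref{lemma_Rees_package} will not be needed. Since $\Lambda$ here is the single diagram $\sigma=(s_1,\ldots,s_p)$, the polyhedron $\Gamma=\conv\big(\underline{\gamma}(\sigma)\big)+\RR_{\gs 0}^n$ is the shifted orthant $\{\fX\in\RR^n_{\gs 0}\mid X_i\gs\gamma_i(\sigma)\text{ for all }i\}$, a positive polyhedron in the sense of \autoref{def_pos_poly}. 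Its facet hyperplanes are the $X_i=\gamma_i(\sigma)$, and such a hyperplane is non-coordinate exactly when $\gamma_i(\sigma)>0$, i.e. when $i\ls s_1$; for $i>s_1$ one has $\gamma_i(\sigma)=0$. For a non-coordinate facet the defining vector $\bh$ of \autoref{prop_Rees_Package} is the $i$-th coordinate vector, so the associated $\B$-monomial function is $V=\gamma_i$, which is a valuation of $R$ by \cite[Theorem 3.8]{Conca98Hankel}. Hence the hypothesis of \autoref{prop_Rees_Package} on the $V_k$ holds for free.

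The remaining hypothesis of \autoref{prop_Rees_Package} is that for every $n\in\NN$ the integral closure $\overline{H_\sigma^n}$ is spanned as a $\kk$-vector space by $\{\underline{\delta}\in\B\mid\underline{\gamma}(\underline{\delta})\in n\Gamma\}$, i.e. by the standard monomials $\underline{\delta}$ with $\gamma_i(\underline{\delta})\gs n\gamma_i(\sigma)$ for all $i$. Because each $\gamma_i$ is additive under concatenation of diagrams and $H_\sigma^n=H_{\sigma^n}$, where $\sigma^n$ is the concatenation of $n$ copies of $\sigma$, one has $\underline{\gamma}(\sigma^n)=n\underline{\gamma}(\sigma)$; so it is enough to establish, for an arbitrary product $H_\tau$, that $\overline{H_\tau}$ is spanned by $\{\underline{\delta}\in\B\mid\gamma_i(\underline{\delta})\gs\gamma_i(\tau)\text{ for all }i\}$. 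I would extract this from Conca's description of integral closures of products of determinantal ideals of Hankel matrices in \cite{Conca98Hankel}: his straightening law shows such ideals are $\B$-monomial, and membership of a standard monomial is governed by the $\gamma_i$-filtrations. With this, \autoref{prop_Rees_Package} gives that $(\B,\underline{\gamma},\Gamma)$ is a Rees package for $H_\sigma$, that $\RV(H_\sigma)$ is the set of $\gamma_i$ attached to the non-coordinate facets of $\Gamma$ and hence $\RV(H_\sigma)\subseteq\{\gamma_1,\ldots,\gamma_{s_1}\}$, and that $\overline{H_\sigma^w}$ is spanned by $\{\underline{\delta}\in\B\mid\underline{\gamma}(\underline{\delta})\in w\Gamma\}$ for every $w\in\QQez$.

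For the closed formula, since $\gamma_i(\underline{\delta})\in\NN$ the condition $\underline{\gamma}(\underline{\delta})\in w\Gamma$ is equivalent to $\gamma_i(\underline{\delta})\gs\lceil w\gamma_i(\sigma)\rceil$ for every $i$. Using Conca's description of the symbolic powers of the primes $H_i$ \cite{Conca98Hankel}, namely that $H_i^{(a)}$ is spanned by $\{\underline{\delta}\in\B\mid\gamma_i(\underline{\delta})\gs a\}$, an intersection of such $\B$-monomial ideals is spanned by the standard monomials meeting all the inequalities at once, so $\bigcap_{i=1}^n H_i^{(\lceil w\gamma_i(\sigma)\rceil)}$ is spanned by $\{\underline{\delta}\in\B\mid\underline{\gamma}(\underline{\delta})\in w\Gamma\}=\overline{H_\sigma^w}$ (for $i>s_1$ the exponent is $0$ and $H_i^{(0)}=R$ may be dropped). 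I expect the middle step to be the real work: isolating from \cite{Conca98Hankel} the exact statement that integral closures of products $H_\tau$ are cut out in the standard-monomial basis by $\gamma_i(\underline{\delta})\gs\gamma_i(\tau)$, equivalently that the Hankel straightening law is simultaneously compatible with all the $\gamma_i$-filtrations. Should that not be available off the shelf, the fallback is to verify the hypothesis of \autoref{lemma_Rees_package} — that the product of two standard monomials with constant $\underline{\gamma}$-value has a straightening summand of additive $\underline{\gamma}$-value — directly from Conca's explicit relations, together with the known normality of the relevant Rees-type algebras.
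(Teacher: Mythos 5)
Your proposal is correct and follows essentially the same route as the paper: the spanning description of $\overline{H_\sigma^n}$ by standard monomials with $\underline{\gamma}(\underline{\delta})\in n\Gamma$ is exactly \cite[Theorem 3.12]{Conca98Hankel}, the closed intersection formula comes from \cite[Theorems 3.8 \& 3.12]{Conca98Hankel}, and \autoref{prop_Rees_Package} then does the rest. Your one genuine refinement is the observation that, since $\Gamma$ is a translated orthant, the non-coordinate facet normals are coordinate vectors and the associated $\B$-monomial functions are just the individual $\gamma_i$, already valuations by \cite[Theorem 3.8]{Conca98Hankel}, so \autoref{lemma_Rees_package} (which the paper invokes without verifying its hypothesis in this case) is indeed unnecessary here.
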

\begin{proof}
	By \cite[Theorem 3.12]{Conca98Hankel}  
	the integral closure $\overline{H_\sigma^n}$ is spanned as a $\kk$-vector space by the set
	$\{\underline{\delta}\in \B\mid \underline{\gamma}(\underline{\delta})\in n\Gamma\}$.
	 Thus the  statements follow form  \autoref{prop_Rees_Package} and \autoref{def_Rees_Package}. 
	The last formula follows  from  \cite[Theorems 3.8 \& 3.12]{Conca98Hankel}.  
\end{proof}

\section{Rees package and the summation formula}\label{sec_package_summ}

This section is devoted to the proof of \autoref{main_Rees_P}, which establishes that two ideals with a Rees package satisfy the summation formula for rational powers for every $w\in \QQ_{\gs 0}$. From this theorem \autoref{thmC} (or \autoref{cor_invs_all}) directly follows. We will continue using the notations and definitions in \autoref{sec_Rees_Rat}.

We begin with the following basic convex geometric fact. We include a sketch of its proof as we could no find a reference for it, but first we need the following notation.

\begin{setup}\label{notation_last_sec}
	Let $\Gamma_1\subsetneq \RR^{n_1}_{\gs 0}$ and $\Gamma_2\subsetneq \RR^{n_2}_{\gs 0}$ be two positive polyhedra. We denote by $\conv(\Gamma_1, \Gamma_2)\subseteq \NN^{n_1+n_2}$ the convex hull of $\Gamma_1$ and $\Gamma_2$ after we embed them in $\NN^{n_1+n_2}$  by adding zeroes in the last $n_2$ components and the first $n_1$ components, respectively. For two hyperplanes  $H_1, H_2$ defined by $\langle\bh_1,\fX_1\rangle=h_{1,1}X_{1,1}+\cdots+ h_{1,n_1}X_{1,n_1}=c_1$ and   $\langle\bh_2,\fX_2\rangle=h_{2,1}X_{2,1}+\cdots+ h_{2,n_2}X_{2,n_2}=c_2$, respectively, we denote by $H_1\star H_2$ the hyperplane
	$H_1\star H_2 = \langle c_2\bh_1+c_1\bh_2, (\fX_1,\fX_2)\rangle=c_1c_2$.
\end{setup}

\begin{proposition}\label{subdirect}
Adopt \autoref{notation_last_sec}. $\conv(\Gamma_1, \Gamma_2)$ is a positive polyhedron and its non-coordinate supporting hyperplanes are those of the form $H_1\star H_2$ where $H_1$ and $H_2$ are non-coordinate supporting  hyperplanes of $\Gamma_1$ and $\Gamma_2$, respectively.
\end{proposition}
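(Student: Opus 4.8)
The plan is to analyze $\Gamma := \conv(\Gamma_1, \Gamma_2)$ directly via its supporting hyperplanes. First I would verify that $\Gamma$ is a positive polyhedron: writing $\Gamma_i = \mathscr{Q}_i + \RR^{n_i}_{\gs 0}$ with $\mathscr{Q}_i$ a rational polytope, one checks that $\conv(\Gamma_1,\Gamma_2) = \conv(\mathscr{Q}_1, \mathscr{Q}_2) + \RR^{n_1+n_2}_{\gs 0}$, where on the right $\mathscr{Q}_1,\mathscr{Q}_2$ are embedded as before; this follows since adding a nonnegative vector in the ``$\Gamma_1$ block'' to a point of $\mathscr{Q}_1$ (embedded with zeros) lands in $\Gamma_1$ embedded, and the recession cone of a convex hull of two sets with recession cones $\RR^{n_1}_{\gs 0}\times 0$ and $0\times\RR^{n_2}_{\gs 0}$ is their sum $\RR^{n_1+n_2}_{\gs 0}$. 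So $\Gamma$ is a positive polyhedron, hence its boundary is a union of facets lying on rational supporting hyperplanes, and after clearing denominators each non-coordinate facet lies on a unique hyperplane of the form $\langle \bh, \fX\rangle = c$ with $\bh \in \NN^{n_1+n_2}$ and $c \in \ZZ_{>0}$, where we may and do take $\gcd$ of the relevant data to make this canonical.

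Next I would show that $H_1 \star H_2$ is indeed a supporting hyperplane of $\Gamma$ whenever $H_i$ is a non-coordinate supporting hyperplane of $\Gamma_i$. Say $H_i$ is $\langle \bh_i, \fX_i\rangle = c_i$ with $\Gamma_i \subseteq \{\langle \bh_i, \fX_i\rangle \gs c_i\}$ (and $c_i > 0$). For a point $(\fx_1, \fx_2)\in \Gamma$, write it as a convex combination $\lambda p + (1-\lambda) q$ with $p \in \Gamma_1$ embedded (so second block zero) and $q \in \Gamma_2$ embedded (first block zero); then $\langle c_2\bh_1 + c_1\bh_2, (\fx_1,\fx_2)\rangle = \lambda c_2 \langle \bh_1, p_1\rangle + (1-\lambda) c_1 \langle\bh_2, q_2\rangle \gs \lambda c_2 c_1 + (1-\lambda) c_1 c_2 = c_1 c_2$, using $\langle \bh_1, p_1\rangle \gs c_1$ and $\langle \bh_2, q_2\rangle \gs c_2$. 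Equality is attained: pick $p$ on the facet of $\Gamma_1$ supported by $H_1$ and $q$ on the facet of $\Gamma_2$ supported by $H_2$, and take the convex combination with $\lambda = c_1/(c_1+c_2)$, which gives $\langle c_2\bh_1+c_1\bh_2, (\fx_1,\fx_2)\rangle = c_1 c_2$; one checks the face cut out has the right dimension (it contains a face-of-$\Gamma_1$ times a point plus a point times face-of-$\Gamma_2$, spanning a facet of $\Gamma$ since the recession cone restricted there is full). This shows every $H_1 \star H_2$ is among the non-coordinate supporting hyperplanes.

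Finally, and this is where I expect the main work, I would show the converse: every non-coordinate supporting hyperplane of $\Gamma$ arises as some $H_1 \star H_2$. Let $H: \langle \bh, \fX\rangle = c$ support a facet $F$ of $\Gamma$, with $\bh = (\bh', \bh'') \in \NN^{n_1}\times\NN^{n_2}$ non-coordinate and $c > 0$, and $\Gamma \subseteq \{\langle\bh,\fX\rangle \gs c\}$. Restricting to the embedded copy of $\Gamma_1$, the linear functional $\langle \bh', \cdot\rangle$ is bounded below on $\Gamma_1$ by some value $c_1 \gs 0$, attained on a face $F_1$; similarly $\langle \bh'', \cdot\rangle$ has minimum $c_2 \gs 0$ on $\Gamma_1$, ahem $\Gamma_2$, attained on a face $F_2$; and since points of $\Gamma$ are convex combinations of embedded points of $\Gamma_1$ and $\Gamma_2$, the minimum of $\langle\bh,\cdot\rangle$ over $\Gamma$ equals $\min(c_1, c_2)$ — but $F$ being a facet and $H$ non-coordinate forces, after a short argument, that $c_1 = c_2 = c$ (if $c_1 < c_2$ the minimum is attained only inside the embedded $\Gamma_1$, and one argues $\bh''$ must then be a coordinate-type functional or $H$ fails to be a facet of the full-dimensional $\Gamma$, contradicting non-coordinateness; the case $c_1 = 0$ is handled because then $\bh'$ would be supported off the non-coordinate part). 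Then $H_1 := \{\langle\bh',\fX_1\rangle = c_1\}$ and $H_2 := \{\langle\bh'',\fX_2\rangle = c_2\}$ are non-coordinate supporting hyperplanes of $\Gamma_1, \Gamma_2$ (non-coordinate precisely because $\bh$ is), and rescaling to match the normalization $H_1 \star H_2 = \{\langle c_2\bh' + c_1\bh'', \cdot\rangle = c_1 c_2\} = \{\langle c(\bh',\bh''), \cdot\rangle = c^2\} = H$. The hard part is the bookkeeping in this last paragraph — making precise ``forces $c_1 = c_2$'' and handling degenerate faces — so I would phrase it as: a facet of $\Gamma$ on which $\langle\bh,\cdot\rangle$ is minimized must meet the relative interiors of both embedded $\Gamma_i$'s or else be a coordinate face, which pins down $c_1 = c_2$; the rest is the scaling identity, which is immediate. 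Since the excerpt says the authors include only a sketch, I would present exactly this level of detail.
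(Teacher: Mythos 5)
Your argument is correct, but it is a genuinely different (and more self-contained) route than the one the paper takes. The paper's proof is essentially two citations: it writes $\Gamma_i=\mathscr{Q}_i+\RR^{n_i}_{\gs 0}$, invokes \cite[Proposition 1.31]{bruns2009polytopes} to get $\conv(\Gamma_1,\Gamma_2)=\conv(\mathscr{Q}_1,\mathscr{Q}_2)+\RR^{n_1+n_2}_{\gs 0}$, and then quotes McMullen's result on subdirect sums of polytopes (\cite[Proposition 2.3]{McMullen76}) for the description of the facet-defining hyperplanes; the operation $H_1\star H_2$ is exactly McMullen's. You instead reprove the McMullen step from scratch: the forward inclusion via the convex-combination estimate $c_2\lambda\langle\bh_1,p_1\rangle+c_1(1-\lambda)\langle\bh_2,q_2\rangle\gs c_1c_2$, and the converse by splitting a non-coordinate functional $\bh=(\bh',\bh'')$ and comparing its minima $c_1,c_2$ over the two embedded factors. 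What your approach buys is independence from the reference and an explicit identification of the face cut out by $H_1\star H_2$, namely $\conv(F_1\times\{0\},\{0\}\times F_2)$ with $F_i$ the facet of $\Gamma_i$ supported by $H_i$; what the paper's approach buys is brevity. One place where your write-up should be tightened: the step ``forces $c_1=c_2$'' is best phrased not as the facet meeting relative interiors of the embedded $\Gamma_i$ (it need not), but as a dimension count — if, say, $c_1<c_2$, the face of $\Gamma$ on which $\langle\bh,\cdot\rangle$ attains its minimum is contained in $\Gamma_1\times\{0\}$, hence has dimension at most $n_1-1<n_1+n_2-1$ and cannot be a facet; and when $c_1=c_2=c>0$ the face $\conv(F_1\times\{0\},\{0\}\times F_2)$ has dimension $\dim F_1+\dim F_2+1$ (the two affine hulls lie in complementary subspaces and miss the origin since $c>0$), which equals $n_1+n_2-1$ exactly when both $F_i$ are facets. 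With that made explicit your argument is complete; note also that throughout, ``supporting hyperplane'' must be read as facet-defining, as the statement is false for arbitrary supporting hyperplanes.
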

\begin{proof}
Write $\Gamma_1=\mathscr{Q}_1+\RR_{\gs 0}^{n_1}$ and $\Gamma_2=\mathscr{Q}_2+\RR_{\gs 0}^{n_2}$ where $\mathscr{Q}_1$ and $\mathscr{Q}_2$ are rational  polytopes not containing the origin. By \cite[Proposition 2.3]{McMullen76} the second statement  holds for $\conv(\mathscr{Q}_1, \mathscr{Q}_2)$.
By \cite[Proposition 1.31]{bruns2009polytopes} and its proof we have $\conv(\Gamma_1, \Gamma_2)=\conv(\mathscr{Q}_1, \mathscr{Q}_2)+\RR_{\gs 0}^{n_1+n_2}$, and from this  both statements  follow for $\conv(\Gamma_1, \Gamma_2)$.
\end{proof}

\begin{example}\label{example_Reeses}
Consider the positive polyhedra $\Gamma_1=(4) +\RR_{\gs 0}^{1}$ and
$\Gamma_2=\conv\left((2,1), (3,0)\right)+\RR^2_{\gs 0}$ (cf. \autoref{example_det}). The only non-coordinate  hyperplane $H$ of $\Gamma_1$ has equation $X=4$, and the ones of $\Gamma_2$, denoted by $H_1$ and $H_2$, have equations $X_1+X_2=3$ and $X_1=2$. Therefore, from  \autoref{subdirect} it follows that $\conv(\Gamma_1, \Gamma_2)$ have two non-coordinate  hyperplanes $H\star H_1$ and $H\star H_2$ with equations $4X_1+4X_2+3X = 12$ and $2X_1+X = 4$,  see \autoref{fig3}.
	\begin{figure}[hbt!]
	\centering
	\subfigure{\includegraphics[height=2in]{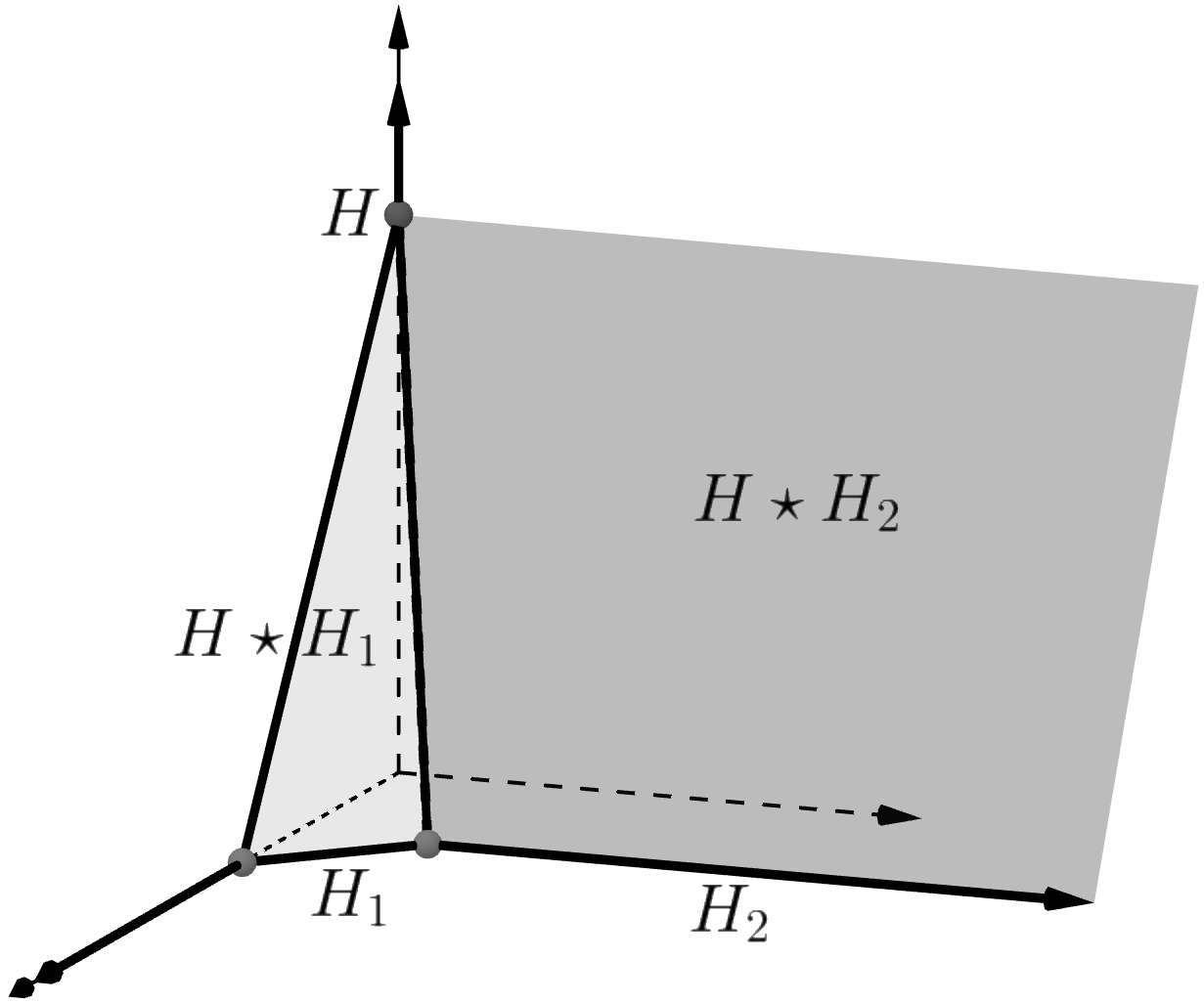}}
	\caption{The convex hull $\conv(\Gamma_1, \Gamma_2)$.} \label{fig3}
\end{figure}
\end{example}

The following setup will be used  in the rest of this section.

\begin{setup}\label{setup_sum}
Let $\kk$ be a field. Let $R$ and $S$  be finitely generated $\kk$-algebras that are domains. Let  $I\subset R$ and $J\subset S$ be nonzero  ideals that have a Rees package (see  \autoref{def_Rees_Package}). Let $(\B,\underline{v}=(v_1,\ldots, v_d), \Gamma)$ and $(\C, \underline{\gamma}=(\gamma_1,\ldots, \gamma_e), \Sigma)$ be Rees packages for $I$ and $J$, respectively. We denote by $\B\otimes \C$ the basis $\{\fb\otimes \mathfrak{c}\mid \fb\in\B, \mathfrak{c}\in \C\}$ of $T:=R\otimes_\kk S$. 
\end{setup}

The following are two technical lemmas that are needed in the main result of this section.

\begin{lemma}\label{sums_vals}
Adopt \autoref{setup_sum}. Let $v$ and $\gamma$ be discrete valuations of $R$ and $S$, respectively, such that $v$ is $\B$-monomial and $\gamma$ is $\C$-monomial. Let $v+ \gamma: T\to \NN$ be the $\B\otimes \C$-monomial function induced by $(v+ \gamma)(\fb\otimes \mathfrak{c})=v(\fb)+\gamma(\mathfrak{c})$. Then $v+ \gamma$ is a valuation.
\end{lemma}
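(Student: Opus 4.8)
Since $v+\gamma$ is, by construction, a $\B\otimes\C$-monomial function, the inequality $(v+\gamma)(F+G)\gs \min\{(v+\gamma)(F),(v+\gamma)(G)\}$ is automatic, and $v+\gamma$ takes a finite value on every nonzero element of $T$; so the only point that requires proof is multiplicativity, $(v+\gamma)(FG)=(v+\gamma)(F)+(v+\gamma)(G)$ for all nonzero $F,G\in T$. To organize this I would pass to an associated graded ring. For $a\in\NN$ set $R^v_{\gs a}:=\{r\in R\mid v(r)\gs a\}$ and $S^\gamma_{\gs a}:=\{s\in S\mid \gamma(s)\gs a\}$; using that $v$ is $\B$-monomial and $\gamma$ is $\C$-monomial, the $\kk$-subspace $T_{\gs N}\subseteq T$ spanned by the set $\{\fb\otimes\mathfrak{c}\mid v(\fb)+\gamma(\mathfrak{c})\gs N\}$ coincides with $\sum_{a\in\ZZ}R^v_{\gs a}\otimes_\kk S^\gamma_{\gs N-a}$, and for nonzero $F$ one has $(v+\gamma)(F)=\max\{N\mid F\in T_{\gs N}\}$.

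Next I would identify the associated graded algebra. Write $\gr_v(R)=\bigoplus_a R^v_{\gs a}/R^v_{\gs a+1}$, define $\gr_\gamma(S)$ analogously, and let $\overline{\fb},\overline{\mathfrak{c}}$ denote the images of basis elements. Since $v$ and $\gamma$ are discrete valuations, $\gr_v(R)$ and $\gr_\gamma(S)$ are domains and the initial-form maps $\operatorname{in}_v,\operatorname{in}_\gamma$ are multiplicative. I claim that $\overline{\fb\otimes\mathfrak{c}}\mapsto\overline{\fb}\otimes\overline{\mathfrak{c}}$ defines an isomorphism of graded $\kk$-algebras $\gr_{v+\gamma}(T):=\bigoplus_N T_{\gs N}/T_{\gs N+1}\xrightarrow{\ \sim\ }\gr_v(R)\otimes_\kk\gr_\gamma(S)$, where the target carries the total grading. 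Bijectivity on the natural bases is immediate from the monomial hypotheses, and compatibility with products reduces, by bilinearity, to the identity $\operatorname{in}_v(\fb\fb')\otimes\operatorname{in}_\gamma(\mathfrak{c}\mathfrak{c}')=(\overline{\fb}\,\overline{\fb'})\otimes(\overline{\mathfrak{c}}\,\overline{\mathfrak{c}'})$, which is precisely the multiplicativity of the two initial-form maps. Granting this isomorphism, the standard dictionary between a filtered ring and its associated graded ring shows that $v+\gamma$ is multiplicative exactly when $\gr_{v+\gamma}(T)$ is a domain: for nonzero $F,G$, one has $(v+\gamma)(FG)=(v+\gamma)(F)+(v+\gamma)(G)$ if and only if the product of their initial forms is nonzero in $\gr_{v+\gamma}(T)$, and every nonzero homogeneous element of $\gr_{v+\gamma}(T)$ arises as such an initial form.

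The main obstacle is the last step: proving that $\gr_v(R)\otimes_\kk\gr_\gamma(S)$ is a domain. Each factor is a domain, but over a field that is not algebraically closed the tensor product of two domains need not be one, so this is not formal. I expect to clear it by exploiting the monomial structure: the prime $\mathfrak{p}_v=\{r\in R\mid v(r)>0\}$, and the graded pieces of $\gr_v(R)$, are in every situation where \autoref{sums_vals} is applied controlled by the same combinatorial data as $R$ itself (a determinantal ideal, an ideal of Pfaffians, or a facial ideal in a semigroup ring), so that $\gr_v(R)$ is an affine semigroup ring over $\kk$, hence geometrically integral; and the tensor product of a geometrically integral $\kk$-algebra with any $\kk$-domain is again a domain. (Alternatively, one may run the argument after faithfully flat base change to $\overline{\kk}$, which is harmless once $R$ and $S$—and hence $\gr_v(R)$ and $\gr_\gamma(S)$—are known to remain integral.) This is the only point where more than soft commutative algebra enters, and it is where the real work lies.
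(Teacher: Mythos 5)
Your argument follows essentially the same route as the paper's: reduce to multiplicativity, pass to the associated graded rings $G_1=\bigoplus_n I_n/I_{>n}$ and $G_2=\bigoplus_n J_n/J_{>n}$ of the two valuation filtrations, identify the graded ring of $T$ with $G_1\otimes_\kk G_2$ in total degree, and derive the conclusion from the fact that this tensor product is a domain. The one step you leave open is exactly the one the paper has to address, but it closes it differently from either of your two suggestions: rather than proving geometric integrality of $G_1$ (which is not available in the stated generality --- the lemma concerns arbitrary $\B$-monomial valuations, and the graded rings arising in the paper's applications are not affine semigroup rings), the paper makes the reduction to $\kk=\overline{\kk}$ \emph{before} forming the graded rings, at the level of the statement to be contradicted. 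Namely, the hypothesis $V(fg)>V(f)+V(g)$ says that every basis element of $\B\otimes\C$ of value $V(f)+V(g)$ cancels in the expansion of $fg$; this is a statement about the structure constants of multiplication in the bases $\B$ and $\C$ together with the coefficients of $f$ and $g$, and hence persists verbatim under extension of scalars. One may therefore assume $\kk=\overline{\kk}$ and then invoke Bourbaki's theorem that a tensor product of domains over an algebraically closed field is a domain. Your alternative (b) is the right instinct, but the proviso you attach to it --- that $R$ and $S$ ``remain integral'' after base change --- is neither the relevant condition nor true in general (e.g.\ $\QQ[x]/(x^2+1)\otimes_\QQ\overline{\QQ}$ is not a domain); what must survive the extension is only the cancellation statement and the domain property of the graded rings, not of $R$ and $S$ themselves.
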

\begin{proof}
Set $V:=v+\gamma$. It suffices to show that for every $f,g\in T$ one has $V(fg)=V(f)+V(g)$. Write
$$f=\sum_{c_{i}\in \kk^*}c_{i}\,\fb_i\otimes \fc_i\,\,\,\quad \text{and} \quad \,\,\,g=\sum_{d_{j}\in \kk^*}d_{j}\,\fb_j\otimes \fc_j.$$
Set $a:=V(f)$ and $b:=V(g)$. We may
assume $V(\fb_i\otimes \fc_i)=a$  and $V(\fb_j\otimes \fc_j)=b$ for every $i,j$.
Assume $V(fg)>a+b$, we will arrive to a contradiction. The latter means that all  the elements of $\B\otimes \C$ of degree $a+b$ appearing in the expansions of
$(\fb_i\otimes \fc_i)(\fb_j\otimes \fc_j)$ cancel in $fg$. Since 
these monomials would still cancel if one passes to the algebraic closure of $\kk$,  we may assume $\kk=\overline{\kk}$.

For each $n\in \NN$ consider the following $R$-ideals
$I_n:=(\{\fb\in \B\mid v(\fb)\gs n\})$ and   $I_{>n}:=(\{\fb\in \B\mid v(\fb)> n\})$, and the $S$-ideals
$J_n:=(\{\fc\in \C\mid \gamma(\fc)\gs n\})$ and  $J_{>n}:=(\{\fc\in \C\mid \gamma(\fc)> n\})$.
Consider the $\kk$-algebras
$G_1 = \oplus_{n\in \NN}I_n/I_{>n}$ and $G_2 = \oplus_{n\in \NN}J_n/J_{>n}$. One easily shows that $G_1$ and $G_2$ are domains. Let $G:=G_1\otimes_\kk G_2$ which is also a domain by \cite[Chapter V, \S17.2, Corollary]{Bourbaki_Algebra_II}. Consider $G$ as an $\NN$-graded $\kk$-algebra with $[G]_m=\oplus_{0\ls n \ls m}[G_1]_n\otimes_{\kk}[G_2]_{m-n}$ for every $m\in \NN$. Thus,  the images  $[f]$, $[g]$ in $G$ are forms of degrees $a$ and $b$, respectively. By assumption one has that $[f][g]=[fg]$ vanishes, which is a contradiction.
\end{proof}

\begin{lemma}\label{vals_are_intc}
Let $R$ be a Noetherian domain and  let $v$ be a discrete valuation of $R$. Let  $m\in \ZZ_{> 0}$ and consider the ideal $K=\{x\in R\mid v(x)\gs m\}$. Then $K$ is integrally closed.
\end{lemma}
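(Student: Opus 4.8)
The plan is to argue directly from the definition of integrality over an ideal, using only that a discrete valuation is additive on products and satisfies the ultrametric inequality on sums.

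First I would dispose of the trivial case $x=0$ and assume $x\in\overline{K}$ is nonzero; since the integral closure of an ideal is taken inside $R$ (see \autoref{def_int_clo_rati}), we have $x\in R$, so $t:=v(x)$ is a well-defined nonnegative integer. Fix an equation of integral dependence $x^n+a_1x^{n-1}+\cdots+a_n=0$ with $a_i\in K^i$. The key elementary observation is that $v(a_i)\gs mi$ for every $i$ with $a_i\neq 0$: any element of $K^i$ is a finite sum of products $k_1\cdots k_i$ with $k_j\in K$, and since $v$ is a valuation, $v(k_1\cdots k_i)=\sum_j v(k_j)\gs mi$, while $v$ of a sum is bounded below by the minimum of the valuations of its terms.

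Next I would apply $v$ to the identity $x^n=-\sum_{i=1}^n a_i x^{n-i}$. Not every $a_i$ need be nonzero, but at least one is (else $x^n=0$, forcing $x=0$), so the ultrametric inequality yields an index $i\in\{1,\dots,n\}$ with $a_i\neq 0$ and $nt=v(x^n)\gs v(a_i)+(n-i)t\gs mi+(n-i)t$. Rearranging gives $it\gs mi$, hence $t\gs m$ because $i\gs 1$. Thus $v(x)\gs m$, i.e. $x\in K$, which proves $\overline{K}=K$.

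I do not expect any genuine obstacle here; the only care needed is the bookkeeping with possibly-zero coefficients $a_i$ (which contribute value $+\infty$ and are harmless) and the fact that $v$, being a discrete valuation of $R$, is integer-valued and centered in $R$ — both included in the hypotheses.
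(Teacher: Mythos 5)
Your argument is correct and is essentially the paper's proof: both apply the valuation to the equation of integral dependence, use $v(a_i)\gs mi$ for $a_i\in K^i$ together with the ultrametric inequality, and conclude $v(x)\gs m$. The only difference is presentational — the paper argues by contradiction (assuming $v(x)<m$ forces the leading term to have strictly minimal valuation, so the sum cannot vanish), whereas you run the same estimate directly.
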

\begin{proof}
Assume that there exists $x\in \overline{K}\setminus K$, i.e., $0<v(x)<m$. Let  $x^n+a_1x^{n-1}+\cdots+a_n=0$ with  $a_i \in K^i$ be an integral equation of  $x$.   Since $(n-i)v(x)<(n-i)m$ for every $0\ls i<n$, we have $nv(x)< (n-i)m + iv(x)$. Therefore $v(x^n+a_1x^{n-1}+\cdots+a_n)=nv(x)\in \ZZ_{> 0}$, which is a  contradiction.
\end{proof}

The following is the main ingredient in the proof of the main result of this section.

\begin{proposition}\label{prev_main_package}
Adopt \autoref{setup_sum}. Then $\left(\B\otimes \C, (\underline{v},\underline{\gamma}), \conv(\Gamma,\Sigma)\right)$ is a Rees package for 
$IT +JT$. 
\end{proposition}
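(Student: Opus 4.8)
The plan is to verify that $\left(\B\otimes\C,(\underline{v},\underline{\gamma}),\conv(\Gamma,\Sigma)\right)$ satisfies the hypotheses of \autoref{prop_Rees_Package} for the ideal $IT+JT$; by \autoref{def_Rees_Package} this is exactly the assertion. That $\B\otimes\C$ is a $\kk$-basis of $T$ is part of \autoref{setup_sum}, and extending each $v_i$ and $\gamma_j$ to $T$ by $v_i(\fb\otimes\fc):=v_i(\fb)$, $\gamma_j(\fb\otimes\fc):=\gamma_j(\fc)$ gives $\B\otimes\C$-monomial functions of $T$ directly from the definition. By \autoref{subdirect}, $\conv(\Gamma,\Sigma)$ is a positive polyhedron whose non-coordinate supporting hyperplanes are precisely the hyperplanes $H\star H'$, where $H$ and $H'$ range over the non-coordinate supporting hyperplanes of $\Gamma$ and $\Sigma$. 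Thus it remains to show: (a) each $H\star H'$ determines a valuation of $T$; and (b) for every $n\in\NN$, $\overline{(IT+JT)^n}$ is spanned over $\kk$ by $\mathcal{M}_n:=\{\fb\otimes\fc\mid(\underline{v},\underline{\gamma})(\fb\otimes\fc)\in n\conv(\Gamma,\Sigma)\}$.

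For (a), write $H:\langle\bh,\fX\rangle=c$ and $H':\langle\bh',\fX\rangle=c'$. Applying \autoref{prop_Rees_Package} to $I$ and to $J$, the $\B$-monomial function $V:=\langle\bh,\underline{v}\rangle$ belongs to $\RV(I)$ and $V':=\langle\bh',\underline{\gamma}\rangle$ belongs to $\RV(J)$; moreover, since $I\subseteq\overline{I}$ is spanned by those $\fb$ with $\underline{v}(\fb)\in\Gamma$, every element of $I$ has $V$-value $\gs c$, so $V(I)\gs c$, and symmetrically $V'(J)\gs c'$. The $\B\otimes\C$-monomial function attached to $H\star H'$ is $\Phi:=c'V+cV'$, with $V$ and $V'$ regarded on $T$ via the monomial structure; since $c'V$ is a $\B$-monomial discrete valuation of $R$ and $cV'$ a $\C$-monomial discrete valuation of $S$, \autoref{sums_vals} shows $\Phi$ is a discrete valuation of $T$. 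A short computation on the basis $\B\otimes\C$ gives $\Phi(IT)=c'V(I)$ and $\Phi(JT)=cV'(J)$, hence $\Phi(IT+JT)=\min\{c'V(I),\,cV'(J)\}\gs cc'$, which is the only property of $\Phi(IT+JT)$ used below.

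For the inclusion $\mathcal{M}_n\subseteq\overline{(IT+JT)^n}$ I would pass through the rational powers of $I$ and $J$ separately. First, $n\conv(\Gamma,\Sigma)=\conv(n\Gamma,n\Sigma)=\bigcup_{\alpha+\beta=n,\ \alpha,\beta\in\QQez}\alpha\Gamma\times\beta\Sigma$, and for a lattice point of this set one may take $\alpha$ and $\beta$ rational, because $\Gamma$ and $\Sigma$ being positive, the largest admissible scalars $\min_k\langle\bh_k,\cdot\rangle/c_k$ are rational. So a basis element $\fb\otimes\fc\in\mathcal{M}_n$ satisfies $\fb\in\overline{I^\alpha}$ and $\fc\in\overline{J^\beta}$ for some $\alpha,\beta\in\QQez$ with $\alpha+\beta=n$, by \autoref{prop_Rees_Package}(2). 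Choosing $q\in\ZZz$ with $q\alpha,q\beta\in\NN$ and using \autoref{def_int_clo_rati}, we get $\fb^q\in\overline{I^{q\alpha}}$ and $\fc^q\in\overline{J^{q\beta}}$, and then
$$(\fb\otimes\fc)^q=\fb^q\otimes\fc^q\in\overline{I^{q\alpha}}\,T\cdot\overline{J^{q\beta}}\,T\subseteq\overline{(IT)^{q\alpha}}\cdot\overline{(JT)^{q\beta}}\subseteq\overline{(IT)^{q\alpha}(JT)^{q\beta}}\subseteq\overline{(IT+JT)^{qn}}$$
using $\overline{L}\,T\subseteq\overline{LT}$, $\overline{K_1}\,\overline{K_2}\subseteq\overline{K_1K_2}$, and $(IT)^{q\alpha}(JT)^{q\beta}\subseteq(IT+JT)^{qn}$; hence $\fb\otimes\fc\in\overline{(IT+JT)^n}$ by \autoref{def_int_clo_rati} once more.

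For the reverse inclusion, let $z\in\overline{(IT+JT)^n}$. For each pair $(H,H')$ and the associated valuation $\Phi$ from (a), the ideal $\{y\in T\mid\Phi(y)\gs n\Phi(IT+JT)\}$ is integrally closed by \autoref{vals_are_intc} and contains $(IT+JT)^n$, hence contains $\overline{(IT+JT)^n}$; thus $\Phi(z)\gs n\Phi(IT+JT)\gs ncc'$, and as $\Phi$ is $\B\otimes\C$-monomial, every basis element occurring in $z$ already satisfies the linear inequality cutting out $H\star H'$. Letting $(H,H')$ vary and using \autoref{subdirect}, each such basis element lies in $n\conv(\Gamma,\Sigma)$, so $z\in\spann_\kk\mathcal{M}_n$. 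This proves (b), and \autoref{prop_Rees_Package} completes the argument. The most delicate point is the inclusion $\mathcal{M}_n\subseteq\overline{(IT+JT)^n}$: the elementary but fiddly rational splitting of a lattice point of $\conv(n\Gamma,n\Sigma)$ has to be combined carefully with the interaction of rational powers, the $\kk$-linear tensor product, and integral closure, applying $\overline{L}\,T\subseteq\overline{LT}$, $\overline{K_1}\,\overline{K_2}\subseteq\overline{K_1K_2}$, and the identities of \autoref{def_int_clo_rati} in the correct order.
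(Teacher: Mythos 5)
Your proof is correct and follows essentially the same route as the paper's: \autoref{subdirect} and \autoref{sums_vals} supply the valuations attached to the non-coordinate hyperplanes of $\conv(\Gamma,\Sigma)$, \autoref{vals_are_intc} yields the inclusion of $\overline{(IT+JT)^n}$ into the span of the lattice basis elements, and the rational splitting of a lattice point of $n\conv(\Gamma,\Sigma)$ combined with rational powers gives the reverse inclusion. The only (harmless) difference is that where the paper invokes \autoref{base_change} for the containment $\overline{I^{q\alpha}}\,\overline{J^{q\beta}}\,T\subseteq\overline{(IT+JT)^{qn}}$, you use the elementary persistence of integral closure, which suffices.
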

\begin{proof}
Set $\Omega:=\conv(\Gamma,\Sigma)$. By \autoref{subdirect} and \autoref{sums_vals} the $\B\otimes \C$-monomial functions $V_k$ in \autoref{prop_Rees_Package} corresponding to the non-coordinate supporting hyperplanes of $\Omega$ are valuations. Thus it suffices  to show that for every $n\in \ZZ_{>0}$ the ideal  $\overline{\left(IT + JT\right)^n}$ is spanned as a $\kk$-vector space by   $\mathcal{W}:=\{\fb\otimes \fc\in \B\otimes \C\mid (\underline{v}(\fb),\underline{\gamma}(\fc)) \in n\Omega\}$. By the above conclusion on the  non-coordinate supporting hyperplanes of $\Omega$ and by \autoref{vals_are_intc} we have that
 $\spann(\W)$ is an integrally closed ideal of $T$. Moreover, $\left(IT + JT\right)^n\subseteq \spann(\W)$ which follows from the fact that 
  for every $w\in \QQ_{>0}$ we have
 \begin{equation}\label{Omega_decomp}
 	w\Omega = \bigcup_{\stackrel{0\ls \alpha\ls w,}{{\scriptscriptstyle \alpha\in \RR}}} \left(\alpha \Gamma + (w-\alpha)\Sigma\right).
 \end{equation}
It remains to show $\spann(\W)\subseteq \overline{\left(IT + JT\right)^n}$.
 Let $\fb\otimes \fc\in \W$ so that $\underline{v}(\fb)\in \alpha \Gamma$ and $\underline{\gamma}(\fc) \in (n-\alpha)\Sigma$ for some  $0\ls \alpha \ls n$, see \autoref{Omega_decomp}. Let $d\in \ZZ$ be such that $\alpha d\in \NN$ and $(n-\alpha)d\in \NN$, then  $\fb^d\in \overline{I^{\alpha d}}$ and
 $\fc^d\in \overline{J^{(n-\alpha) d}}$. Therefore  $(\fb\otimes \fc)^d\in \overline{I^{\alpha d}}\,\overline{J^{(n-\alpha )d}}\,T\subseteq \overline{\left(IT + JT\right)^{nd}}$, where the last inclusion follows from \autoref{base_change}. 
  We conclude that $\fb\otimes \fc\in \overline{\left(IT + JT\right)^n}$, finishing the proof.
\end{proof}

The following is the main result in this section. We note that, by \autoref{discrete}, the sum on the right hand side is finite.

\begin{theorem}\label{main_Rees_P}
Adopt \autoref{setup_sum}. For every $w\in \QQ_{\gs 0}$ we have
\begin{equation}\label{binom_formula}
\overline{\left(IT + JT\right)^w}=\sum_{\stackrel{0\ls \alpha\ls w,}{{\scriptscriptstyle \alpha\in \QQ}}}\overline{I^\alpha}\,\overline{J^{w-\alpha}}\,T = \sum_{\stackrel{0\ls \alpha\ls w,}{{\scriptscriptstyle \alpha\in \QQ}}}\overline{(IT)^\alpha}\,\overline{(JT)^{w-\alpha}}.
\end{equation}
\end{theorem}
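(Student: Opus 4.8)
The plan is to deduce \autoref{main_Rees_P} from \autoref{prev_main_package} together with the polyhedral identity \autoref{Omega_decomp} and the spanning description of rational powers furnished by a Rees package. First I would fix $w \in \QQ_{\gs 0}$ and write $\Omega := \conv(\Gamma, \Sigma)$. By \autoref{prev_main_package}, the triple $\left(\B\otimes\C, (\uv,\underline{\gamma}), \Omega\right)$ is a Rees package for $IT + JT$, so \autoref{prop_Rees_Package}(2) gives that $\overline{(IT+JT)^w}$ is spanned as a $\kk$-vector space by $\W := \{\fb\otimes\fc \in \B\otimes\C \mid (\uv(\fb),\underline{\gamma}(\fc)) \in w\Omega\}$. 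The task is then to identify $\spann(\W)$ with the sum on the right.

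For the inclusion $\supseteq$, I would argue exactly as in the last paragraph of the proof of \autoref{prev_main_package}: given $\alpha \in \QQ$ with $0 \ls \alpha \ls w$, any element of $\overline{I^\alpha}\,\overline{J^{w-\alpha}}\,T$ is a $\kk$-linear combination of products $\fb\otimes\fc$ with $\uv(\fb) \in \alpha\Gamma$ and $\underline{\gamma}(\fc) \in (w-\alpha)\Sigma$ (using the spanning descriptions of $\overline{I^\alpha}$ and $\overline{J^{w-\alpha}}$ from their Rees packages and that $\overline{I^\alpha}\,\overline{J^{w-\alpha}}\,T$ is spanned by such products over $T$); by \autoref{Omega_decomp} each such $(\uv(\fb),\underline{\gamma}(\fc))$ lies in $w\Omega$, hence $\fb\otimes\fc \in \W$. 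Thus $\overline{I^\alpha}\,\overline{J^{w-\alpha}}\,T \subseteq \spann(\W) = \overline{(IT+JT)^w}$ for every such $\alpha$, and summing over $\alpha$ gives $\supseteq$. The identity $\overline{I^\alpha}T = \overline{(IT)^\alpha}$ and $\overline{J^{w-\alpha}}T = \overline{(JT)^{w-\alpha}}$ follows from \autoref{base_change} applied to the (normal, since $R,S$ are over a field and $T = R\otimes_\kk S$ behaves well; more directly, the flat base change $R \to T$ along $\kk \to S$) homomorphisms, which also identifies the two right-hand sums in \autoref{binom_formula}.

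For the reverse inclusion $\subseteq$, take $\fb\otimes\fc \in \W$, so $(\uv(\fb),\underline{\gamma}(\fc)) \in w\Omega$. By \autoref{Omega_decomp} there is $\alpha \in \RR$ with $0 \ls \alpha \ls w$, $\uv(\fb) \in \alpha\Gamma$, and $\underline{\gamma}(\fc) \in (w-\alpha)\Sigma$. A small point to address: \autoref{Omega_decomp} is stated for $\alpha \in \RR$, but since $\uv(\fb), \underline{\gamma}(\fc)$ are lattice points and $\Gamma,\Sigma$ are rational polyhedra, one can perturb $\alpha$ to a rational number in $[0,w]$ keeping both memberships (the set of valid $\alpha$ is a closed subinterval of $[0,w]$ with rational endpoints, hence contains a rational point, or is a single rational point); I would spell this out briefly. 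Then choosing $d \in \ZZz$ with $\alpha d, (w-\alpha)d \in \NN$, the spanning descriptions of the Rees packages give $\fb^d \in \overline{I^{\alpha d}}$ and $\fc^d \in \overline{J^{(w-\alpha)d}}$, so $(\fb\otimes\fc)^d \in \overline{I^{\alpha d}}\,\overline{J^{(w-\alpha)d}}\,T$, whence $\fb\otimes\fc$ lies in the rational power $\overline{I^\alpha}\,\overline{J^{w-\alpha}}\,T$ of that product ideal — more precisely, $\fb\otimes\fc$ is integral over, and in fact $d$-th power lands in, the finitely generated ideal $\overline{I^{\alpha d}}\,\overline{J^{(w-\alpha)d}}\,T$, so $\fb\otimes\fc$ is in $\sqrt{\phantom{x}}$ of it — here I need to be a touch careful and instead say $\fb\otimes\fc \in \overline{I^\alpha}\,\overline{J^{w-\alpha}}\,T$ directly because $\fb \in \overline{I^\alpha}$ and $\fc \in \overline{J^{w-\alpha}}$ by definition of rational powers of $I$ and $J$. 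Hence $\fb\otimes\fc$ lies in the right-hand sum, giving $\spann(\W) \subseteq \sum_\alpha \overline{I^\alpha}\,\overline{J^{w-\alpha}}\,T$. Combining the two inclusions and \autoref{base_change} completes the proof.

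I expect the only genuinely delicate point to be the rationality of $\alpha$ in the decomposition step: \autoref{Omega_decomp} is a real-coefficient statement, and one must ensure a \emph{rational} $\alpha \in [0,w]$ works for the given lattice point, so that the indices $\overline{I^\alpha}$ and $\overline{J^{w-\alpha}}$ make sense as rational powers. This is a routine convexity argument (the feasible set of $\alpha$ is a nonempty closed rational interval), but it is the one place where the passage from the polyhedral picture to the algebra of rational powers needs care. Everything else is a direct unwinding of \autoref{prev_main_package}, \autoref{prop_Rees_Package}, and \autoref{base_change}.
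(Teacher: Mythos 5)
Your proposal is correct and follows essentially the same route as the paper, whose proof of \autoref{main_Rees_P} is exactly the one-line combination of \autoref{prev_main_package}, \autoref{prop_Rees_Package}(2), \autoref{Omega_decomp}, and \autoref{base_change} that you unfold in detail. Your observation that \autoref{Omega_decomp} only provides a real $\alpha$ and that one must check a rational $\alpha$ exists (via the feasible set being a closed interval with rational endpoints, since $w\in\QQ$ and the supporting hyperplanes of $\Gamma,\Sigma$ are rational) is a genuine subtlety the paper leaves implicit, and your treatment of it is right.
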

\begin{proof}
The first equality follows by combining  \autoref{prev_main_package},   \autoref{prop_Rees_Package}(2), and Equation \autoref{Omega_decomp}.
The second equality follows from \autoref{base_change}. 
\end{proof}

In the following corollary we highlight important classes of ideals for which the previous theorem applies.

\begin{corollary}\label{cor_invs_all}
Adopt \autoref{setup_sum}. If $I$ and $J$ belong to any of the following classes of ideals, and not necessarily both from the same class, then {\rm \autoref{binom_formula}} holds.
\begin{enumerate}[{\rm (a)}]
	\item Monomial ideals in affine semigroup rings.
\item  Sums of products of determinantal ideals of a generic matrix $X_{m\times n}$ and, if $\kk=\CC$,  $\GL_m(\CC) \times \GL_n(\CC)$-invariant ideals of $\CC[X_{m\times n}]$.
\item  Sums of products of determinantal ideals of a  generic symmetric matrix $Y_{n\times n}$ and, if $\kk=\CC$, $\GL_n(\CC)$-invariant ideals of $\CC[Y_{n\times n}]$.
\item Sums of products of  ideals of Pfaffians of a skew-symmetric matrix $Z_{n\times n}$ and, if $\kk=\CC$, $\GL_n(\CC)$-invariant ideals of $\CC[Z_{n\times n}]$.
\item Products of determinantal ideals of a Hankel matrix of variables.
\end{enumerate}
\end{corollary}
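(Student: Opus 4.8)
The plan is to observe that this corollary is an immediate consequence of \autoref{main_Rees_P} once one knows that each of the five families listed admits a Rees package in the sense of \autoref{def_Rees_Package}. Indeed, the hypothesis of \autoref{main_Rees_P} is simply that the nonzero ideals $I\subset R$ and $J\subset S$ each have a Rees package, with no compatibility requirement relating the two; so the possibility of mixing classes --- and of the two ideals living in different ambient rings --- is built into that theorem and requires no additional argument.

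Thus the only work is to cite, for each family, the place in \autoref{sec_Rees_Rat} where a Rees package is exhibited: \autoref{mon.aff.rees} for monomial ideals in affine semigroup rings; \autoref{gen_det_rees} together with \autoref{cor_inv_gen} for sums of products of determinantal ideals of a generic matrix and for $\GL_m(\CC)\times\GL_n(\CC)$-invariant ideals; \autoref{gen_sym_rees} together with \autoref{cor_inv_sym} for the symmetric case; \autoref{gen_sym_pff} together with \autoref{cor_inv_Pf} for the Pfaffian case; and \autoref{gen_Hank} for products of determinantal ideals of a Hankel matrix. Each of these results produces an explicit triple $(\B,\uv,\Gamma)$ of $\kk$-basis, monomial valuations, and positive polyhedron satisfying \autoref{def_Rees_Package}. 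Feeding the chosen pair $(I,J)$ into \autoref{main_Rees_P} then yields \autoref{binom_formula}, which is exactly the assertion of the corollary.

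The only point that needs care --- and the closest thing to an obstacle --- is the treatment of the base field. The determinantal, symmetric, Pfaffian, and Hankel Rees packages are constructed over an arbitrary field, so these are always admissible choices for $I$ or $J$ in \autoref{setup_sum}; but the three genuinely invariant-theoretic subfamilies ($\GL_m(\CC)\times\GL_n(\CC)$-invariant, and $\GL_n(\CC)$-invariant in the symmetric and skew-symmetric cases) are defined, and shown to carry a Rees package, only when $\kk=\CC$. Hence, whenever $I$ or $J$ is chosen from one of those subfamilies one implicitly restricts to $\kk=\CC$, consistently with the convention already adopted in \autoref{thmB}. With that bookkeeping understood, no further difficulty arises and the corollary follows.
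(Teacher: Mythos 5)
Your proposal is correct and matches the paper's proof exactly: the paper likewise just combines \autoref{main_Rees_P} with \autoref{mon.aff.rees}, \autoref{gen_det_rees}/\autoref{cor_inv_gen}, \autoref{gen_sym_rees}/\autoref{cor_inv_sym}, \autoref{gen_sym_pff}/\autoref{cor_inv_Pf}, and \autoref{gen_Hank}. Your added remark on the base-field bookkeeping for the invariant-theoretic subfamilies is accurate and consistent with the corollary's statement.
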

\begin{proof}
Combine \autoref{main_Rees_P} with:   \autoref{mon.aff.rees} for (a), \autoref{gen_det_rees}, \autoref{cor_inv_gen} for (b), \autoref{gen_sym_rees}, \autoref{cor_inv_sym} for (c), \autoref{gen_sym_pff}, \autoref{cor_inv_Pf} for (d), and \autoref{gen_Hank} for (e).
\end{proof}

Next we see \autoref{main_Rees_P} and \autoref{cor_invs_all} in action.

\begin{example}\label{example_cool}
Let $T=\kk[x_1^2x_2, x_1x_2^3, y_1, y_2,y_3,y_4,y_5,y_6]$ and $$K= \left(x_1^4x_2^2, x_1^3x_2^4\right) + \left(y_1y_5-y_2y_4, y_1y_6-y_3y_4, y_2y_6-y_3y_5\right) + \left(y_1, y_2,y_3,y_4,y_5,y_6\right)^3\subset T.$$
Notice that $K = IT + JT$ where $I$ is the ideal in \autoref{example_mon_sem} and $J$  the ideal $I_\Lambda$ 
in \autoref{example_det}. We identify these ideals with their images in $T$.
We also set $J_1:=\left(y_1, y_2,y_3,y_4,y_5,y_6\right)$ and $J_2:=\left(y_1y_5-y_2y_4, y_1y_6-y_3y_4, y_2y_6-y_3y_5\right)$.
By \autoref{cor_invs_all} we have
$$\overline{K^{3/2}}
=
\sum_{\stackrel{0\ls \alpha\ls 3/2,}{{\scriptscriptstyle \alpha\in \QQ}}}\overline{I^\alpha}\,\overline{J^{3/2-\alpha}}\,
=
\overline{I^{3/2}} +
\overline{I}\overline{J^{1/2}} +
\overline{I^{1/2}}\overline{J} +
\overline{J^{3/2}},
$$
where the last equality is easily observed  
by inspecting \autoref{fig1}. Finally, using \autoref{example_mon_sem},  \autoref{example_det},  and Macaulay2 \cite{M2} we obtain
\begin{align*}
\overline{K^{3/2}} &=
(x_1^6x_2^3, x_1^5x_2^5) +
(x_1^4x_2^2,x_1^3x_2^4)\overline{J^{1/2}} +
(x_1^2x_2)\overline{J} +
\overline{J^{3/2}}\\
&= (x_1^6x_2^3, x_1^5x_2^5)
+ (x_1^4x_2^2,x_1^3x_2^4)J_1^2
+(x_1^2x_2)\left(J_1^3+J_2\right) +
\left(J_2^{(2)}+ J_1^2J_2+J_1^5\right).
\end{align*}
\end{example}

Now we show that the conclusion of \autoref{main_Rees_P} does not always hold if the ideals $I$ and $J$ belong to the same $\kk$-algebra.

\begin{example} \label{same.ring.no}
	Let $R=\kk[x,y]$ be a polynomial ring over a field $\kk$. Let $I= (xy^3)$ and $J=(x^3y)$ respectively. For all $n\in \ZZ_{> 0}$ we have
	$$x^{4n+2}y^{4n+2}\in \overline{\left(I\ + \,J\right)^{2n+1}} \setminus \sum_{\stackrel{0\ls \alpha\ls 2n+1,}{{\scriptscriptstyle \alpha\in \QQ}}}\overline{I^\alpha}\,\overline{J^{2n+1-\alpha}}.$$
\end{example}

From our methods in this section we also obtain the following interesting result. We recall the definition of the sum of two valuations from \autoref{sums_vals}.

\begin{corollary}\label{cor_Rees_val}
Adopt \autoref{setup_sum}. If  $v\in \RV(I)$ and $\gamma\in \RV(J)$ then  $\gamma(J)v+v(I)\gamma \in \RV(IT + JT)$. Conversely, every Rees valuation of $IT + JT$ can be obtained uniquely this way. In particular, $\left|\RV(IT + JT)\right|=\left|\RV(I)\right|\left|\RV(J)\right|$.
\end{corollary}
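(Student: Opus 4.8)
The plan is to deduce the statement directly from the Rees-package description already available. By \autoref{prev_main_package}, the triple $\left(\B\otimes\C,\,(\underline v,\underline\gamma),\,\conv(\Gamma,\Sigma)\right)$ is a Rees package for $IT+JT$, so \autoref{prop_Rees_Package}(1) identifies $\RV(IT+JT)$ with the collection of valuations $\langle\bh,(\underline v,\underline\gamma)\rangle$ attached to the non-coordinate supporting hyperplanes $\langle\bh,\fX\rangle=c$ of $\conv(\Gamma,\Sigma)$. Applying the same proposition to the packages $(\B,\underline v,\Gamma)$ and $(\C,\underline\gamma,\Sigma)$, one gets that $\RV(I)$ and $\RV(J)$ are parametrized in exactly the same way by the non-coordinate supporting hyperplanes of $\Gamma$ and of $\Sigma$ respectively. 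I would also record at this point that if $H_1=\{\langle\bh_1,\fX_1\rangle=c_1\}$ is such a hyperplane of $\Gamma$, then the valuation $v:=\langle\bh_1,\underline v\rangle$ it contributes satisfies $v(I)=c_1$: this is immediate from \autoref{prop_Rees_Package}, since $\overline I$ is spanned by the $\fb\in\B$ with $\underline v(\fb)\in\Gamma$, on which $\langle\bh_1,\cdot\rangle\gs c_1$ with the value $c_1$ attained at the $\underline v$-image of an element of $I$; similarly $\gamma(J)=c_2$ for the corresponding hyperplane $H_2=\{\langle\bh_2,\fX_2\rangle=c_2\}$ of $\Sigma$.

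Next I would invoke \autoref{subdirect}: the non-coordinate supporting hyperplanes of $\conv(\Gamma,\Sigma)$ are precisely the hyperplanes $H_1\star H_2$ with $H_1,H_2$ non-coordinate supporting hyperplanes of $\Gamma$ and $\Sigma$, and I would observe that the assignment $(H_1,H_2)\mapsto H_1\star H_2$ is injective, since intersecting $H_1\star H_2$ with the coordinate subspace $\{\fX_2=0\}$ recovers $H_1$ and with $\{\fX_1=0\}$ recovers $H_2$. The core computation is then routine: by the definition of $\star$ in \autoref{notation_last_sec}, $H_1\star H_2$ has equation $\langle c_2\bh_1+c_1\bh_2,(\fX_1,\fX_2)\rangle=c_1c_2$, so the valuation of $IT+JT$ attached to it is $\langle c_2\bh_1+c_1\bh_2,(\underline v,\underline\gamma)\rangle=c_2\langle\bh_1,\underline v\rangle+c_1\langle\bh_2,\underline\gamma\rangle$, which is exactly the sum of valuations $\gamma(J)\,v+v(I)\,\gamma$ in the sense of \autoref{sums_vals}, using $c_1=v(I)$ and $c_2=\gamma(J)$. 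This establishes the forward direction.

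For the ``uniquely'' clause and the cardinality formula I would combine the bijection $(H_1,H_2)\leftrightarrow H_1\star H_2$ with the fact that distinct non-coordinate supporting hyperplanes of a Rees package yield distinct Rees valuations (none can be omitted, by \autoref{prop_min_vals}); together these show that $(v,\gamma)\mapsto\gamma(J)\,v+v(I)\,\gamma$ is a bijection $\RV(I)\times\RV(J)\xrightarrow{\ \sim\ }\RV(IT+JT)$, hence $\left|\RV(IT+JT)\right|=\left|\RV(I)\right|\left|\RV(J)\right|$. The one genuinely delicate point is the normalization bookkeeping in the first paragraph: confirming that the representative of $v\in\RV(I)$ produced by the package is precisely the one with $v(I)=c_1$, so that the displayed identity holds on the nose and not merely up to a positive scalar; the rest is formal manipulation of supporting hyperplanes and of the $\B\otimes\C$-monomial functions.
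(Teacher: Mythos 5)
Your proposal is correct and follows the paper's own (very terse) proof exactly: the paper also deduces the statement by combining \autoref{prev_main_package}, \autoref{prop_Rees_Package}(1), and \autoref{subdirect}, and you have simply written out the details — the identification of the valuation attached to $H_1\star H_2$ with $\gamma(J)v+v(I)\gamma$, the normalization $v(I)=c_1$, $\gamma(J)=c_2$, and the injectivity of $(H_1,H_2)\mapsto H_1\star H_2$ — that the paper leaves implicit.
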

\begin{proof}
The result follows from \autoref{prop_Rees_Package}(1), \autoref{prev_main_package}, and \autoref{subdirect}.
\end{proof}

\begin{example}
	Let $I=(x^4)\subset \kk[x]$,  and let $J\subset \kk[y_1, y_2,y_3,y_4,y_5,y_6]$ be as in \autoref{example_cool}.
	By \autoref{example_det}, the polyhedra in the Rees packages of $I$ and $J$ are  $\Gamma_1$ and $\Gamma_2$ as in \autoref{example_Reeses}. Thus by \autoref{cor_Rees_val} (or \autoref{prop_Rees_Package}) and \autoref{example_Reeses}, the Rees valuations of $IT + JT$ are the monomial functions  $3\log_x+4\gamma_1+4\gamma_2$ and  $ \log_x+2\gamma_1$,
	where $\gamma_1$ and $\gamma_2$ are as in \autoref{example_det}, and $\log_x$ extracts the exponent of the variable $x$.
\end{example}

The previous corollary naturally gives rise to the following question.

\begin{question}\label{question_rees_product}
Let $\kk$ be an algebraically closed field.
Let $R$ and $S$  be finitely generated $\kk$-algebras that are domains and $T:=R\otimes_\kk S$. Let   $I\subset R$ and $J\subset S$ be nonzero ideals, do we always have
$\left|\RV(IT + JT)\right|=\left|\RV(I)\right|\left|\RV(J)\right|$?
\end{question}

In the next example we show that the answer to \autoref{question_rees_product} is `no' if we do not assume $\kk=\overline{\kk}$.

\begin{example}\label{example.fields.algebraically.closed}
 Consider the polynomial rings  $R=\QQ[x]$ and $S=\QQ[y]$, and the  prime ideals  $I=(x^2+1)\subset R$,  $J=(y^2+1)\subset S$.
 Since $\overline{R}\left[ \dfrac{x^2+1}{x^2+1} \right]=\overline{R}=R$ and  $IR$ has only one minimal prime we obtain  $\left|\RV(I)\right|=1$. Similarly  $\left|\RV(J)\right|=1$.
 Using  Macaulay2 \cite{M2} we conclude that the ideal $IT + JT= (x^2+1, y^2+1)$ is integrally closed  and that it has  two minimal primes. 
 Therefore $|\R\V(I+J)| \geqslant 2$ by \cite[Discussion 10.1.3]{huneke2006integral}.
\end{example}

\section{ Asymptotic rational powers over algebraically closed fields}\label{sec_asymp_form}

This section is devoted to the proof of \autoref{thm_asymp_main} (or \autoref{thmA}).
We will use the following notation throughout.

\begin{setup}\label{notation_main}
	Let $\kk$ be an algebraically closed field. Let $R$ and $S$  be 
	$\kk$-algebras, and  $I\subset R$ and $J\subset S$  nonzero ideals. We denote by $IT + JT$ the $T$-ideal $IT+JT$, where
	$T:=R\otimes_\kk S$.
\end{setup}

%

The following is a main theorem of this section.  
We note that, by \autoref{discrete}, the sum on the left hand side is finite.

\begin{theorem}\label{thm_asymp_main}
	Adopt \autoref{notation_main} with $R$ and $S$ being  finitely generated $\kk$-algebras and normal  domains. 
	There exists   $w_0\in \QQ_{> 0}$ such that if $w_0\ls \tau \ls w-w_0$, $\tau\in \QQ$ then 
	$$\sum_{\stackrel{0\ls \alpha\ls w,}{{\scriptscriptstyle \alpha\in \QQ}}}\overline{I^\alpha}\,\overline{J^{w-\alpha}}\,T
	\subseteq
	\overline{\left(IT + JT \right)^w}
	\subseteq
	\overline{I^\tau}T + \overline{J^{w-\tau}}T
	.$$
\end{theorem}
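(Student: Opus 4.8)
The plan is to prove the two inclusions separately. The rightmost equality $\overline{I^\tau}T+\overline{J^{w-\tau}}T=\overline{(IT)^\tau}+\overline{(JT)^{w-\tau}}$ follows at once from \autoref{base_change}, since $R\to T$ and $S\to T$ are normal homomorphisms ($T=R\otimes_\CC S$ is flat over each factor, and geometrically normal because $\CC$ is perfect and $R$, $S$ are normal domains). For the first inclusion, fix $\alpha\in\QQ$ with $0\ls\alpha\ls w$. By \autoref{base_change} we have $\overline{I^\alpha}T=\overline{(IT)^\alpha}$ and $\overline{J^{w-\alpha}}T=\overline{(JT)^{w-\alpha}}$, so it suffices to show $\overline{(IT)^\alpha}\,\overline{(JT)^{w-\alpha}}\subseteq\overline{(IT+JT)^w}$; by \autoref{prop_min_vals} this can be checked on the Rees valuations $v\in\RV(IT+JT)$, and for $z_1\in\overline{(IT)^\alpha}$, $z_2\in\overline{(JT)^{w-\alpha}}$ one has $v(z_1z_2)=v(z_1)+v(z_2)\gs\alpha\,v(IT)+(w-\alpha)\,v(JT)\gs w\,v(IT+JT)$, using $IT,JT\subseteq IT+JT$.

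The second inclusion is the heart of the argument and, following the pattern of \cite[Theorem 0.3]{mustacta2002multiplier}, I would prove it by reducing to a log resolution and then to discrete valuation rings. Put $X=\Spec R$, $Y=\Spec S$, $\I=\widetilde I$, $\J=\widetilde J$; then $X\times Y=\Spec T$ is a normal complex variety (as $\CC$ is algebraically closed) and $(IT+JT)^{\sim}=\K:=\I\cdot\O_{X\times Y}+\J\cdot\O_{X\times Y}$. Choose log resolutions $f\colon X'\to X$ of $\I$ and $g\colon Y'\to Y$ of $\J$, with $\I\cdot\O_{X'}=\O_{X'}(-A)$, $\J\cdot\O_{Y'}=\O_{Y'}(-B)$, $A$ and $B$ simple normal crossing, and pass to the smooth variety $Z:=X'\times Y'$. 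Pulling $A$ and $B$ back to $Z$ gives simple normal crossing divisors $\widetilde A$, $\widetilde B$ with disjoint supports such that $\K\cdot\O_Z=\O_Z(-\widetilde A)+\O_Z(-\widetilde B)$, which is locally on $Z$ a monomial ideal $(\fx^{\ba},\fy^{\bh})$ in a regular local ring.

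The combinatorial core, which plays the role of the ``discrete valuation ring case'', is the assertion that for every $\tau\in\QQ$ with $0\ls\tau\ls w$,
\[
\overline{(\K\cdot\O_Z)^w}\ \subseteq\ \O_Z(-\lceil\tau\widetilde A\rceil)+\O_Z(-\lceil(w-\tau)\widetilde B\rceil).
\]
Since rational powers of monomial ideals in regular local rings are monomial, cut out by the scaled Newton polyhedron (by \autoref{mon.aff.rees} and \autoref{localize}), this is checked on monomials $\fx^{\fu}\fy^{\fv}$ lying in $w$ times the Newton polyhedron of $(\fx^{\ba},\fy^{\bh})$; projecting onto one $x$-coordinate and one $y$-coordinate reduces the verification to the statement of the theorem for a pair of discrete valuation rings $V\supseteq(x)$ and $V'\supseteq(y)$, which boils down to the elementary fact that a monomial $x^iy^j$ with $bi+aj\gs abw$ satisfies $i\gs\lceil\tau a\rceil$ or $j\gs\lceil(w-\tau)b\rceil$. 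Note this holds for \emph{every} $\tau\in[0,w]$; the restriction $w_0\ls\tau\ls w-w_0$ in the theorem is forced only by the vanishing step below.

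Finally I would globalize. By the birational transformation rule (\autoref{birr_transf}) for $f\times g\colon Z\to X\times Y$ we have $\overline{\K^w}=(f\times g)_*\overline{(\K\cdot\O_Z)^w}$, while \autoref{alternative} gives $\overline{\I^\tau}=f_*\O_{X'}(-\lceil\tau A\rceil)$ and $\overline{\J^{w-\tau}}=g_*\O_{Y'}(-\lceil(w-\tau)B\rceil)$, so the two summands on the right of the last display push forward to $\overline{\I^\tau}\cdot\O_{X\times Y}$ and $\overline{\J^{w-\tau}}\cdot\O_{X\times Y}$. The one nonformal point is that $(f\times g)_*$ need not preserve a sum of subsheaves: from the short exact sequence
\[
0\to\O_Z(-\lceil\tau\widetilde A\rceil-\lceil(w-\tau)\widetilde B\rceil)\to\O_Z(-\lceil\tau\widetilde A\rceil)\oplus\O_Z(-\lceil(w-\tau)\widetilde B\rceil)\to\O_Z(-\lceil\tau\widetilde A\rceil)+\O_Z(-\lceil(w-\tau)\widetilde B\rceil)\to0
\]
(the kernel equals the sum of the two divisors because $\widetilde A$ and $\widetilde B$ are disjoint) the obstruction to $(f\times g)_*$ preserving this sum lies in $R^1(f\times g)_*\O_Z(-\lceil\tau\widetilde A\rceil-\lceil(w-\tau)\widetilde B\rceil)$, which by the K\"unneth formula equals $R^1f_*\O_{X'}(-\lceil\tau A\rceil)\boxtimes g_*\O_{Y'}(-\lceil(w-\tau)B\rceil)\oplus f_*\O_{X'}(-\lceil\tau A\rceil)\boxtimes R^1g_*\O_{Y'}(-\lceil(w-\tau)B\rceil)$. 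This is where Serre's vanishing theorem enters: $\O_{X'}(-A)=\I\cdot\O_{X'}$ is $f$-semiample and $f$-big (it is the pullback of the relatively ample tautological bundle on the blowup of $\I$ along $X'\to X$), so the relative form of Serre's vanishing theorem produces a $\tau_0\in\QQ_{>0}$ with $R^1f_*\O_{X'}(-\lceil\tau A\rceil)=0$ for $\tau\gs\tau_0$, and likewise a $\mu_0$ for $g$; with $w_0:=\max(\tau_0,\mu_0)$, the hypothesis $w_0\ls\tau\ls w-w_0$ makes both higher direct images vanish. Then $(f\times g)_*$ applied to the displayed inclusion yields $\overline{\K^w}\subseteq\overline{\I^\tau}\cdot\O_{X\times Y}+\overline{\J^{w-\tau}}\cdot\O_{X\times Y}$, and taking global sections over the affine $X\times Y$, together with \autoref{global} and \autoref{base_change}, gives $\overline{(IT+JT)^w}\subseteq\overline{I^\tau}T+\overline{J^{w-\tau}}T$. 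The hard part will be exactly this last step: isolating the precise relative vanishing statement that produces a $w_0$ depending only on $I$ and $J$, and checking the relative semiampleness and bigness needed to invoke it; everything else — the base-change identities, the valuative estimate, and the Newton-polyhedron reduction — is routine.
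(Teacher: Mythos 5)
Your overall architecture matches the paper's: both arguments pass to a product of modifications of $\Spec R$ and $\Spec S$, prove the containment upstairs by a local monomial/valuative computation (your Newton-polyhedron step is essentially the paper's Claim~1, proved there by localizing at the codimension-one associated primes and invoking \autoref{dvr}, which rests on the same \autoref{thm_gen_regular}), and then push forward, controlling the failure of $(f\times g)_*$ to commute with sums by the same short exact sequence, the K\"unneth formula, and a relative vanishing statement. The first inclusion and the final base-change equality are handled exactly as in the paper.

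The genuine gap is in the vanishing step, and it is caused by your choice of log resolutions. On a log resolution $f\colon X'\to X$ of $\I$, the sheaf $\I\cdot\O_{X'}=\O_{X'}(-A)$ is only $f$-semiample and $f$-big (it is the pullback of the tautological bundle through a non-finite map $X'\to \mathrm{Bl}_{\I}X$), \emph{not} $f$-ample, so Serre's vanishing theorem does not apply, and the vanishing you need is in fact false in general: if $\I=(h)$ is principal and $X$ has non-rational singularities, the projection formula gives $R^1f_*\O_{X'}(-nA)\cong \I^n\otimes R^1f_*\O_{X'}\neq 0$ for \emph{every} $n$. The paper avoids this precisely by working on the \emph{normalized blowup} $X^+$, where $\O_{X^+}(-E)$ is $v$-ample, and by using the periodicity $\overline{(\cT^+)^{\beta+m}}=\overline{(\cT^+)^{\beta}}\,\overline{(\cT^+)^{m}}$ from \autoref{Noeth_rat_power}(3) to rewrite $\O_{X^+}(-\lceil\beta E\rceil)$ as one of finitely many fixed sheaves twisted by a high tensor power of the ample $\O_{X^+}(-E)$; only then does Serre vanishing produce the uniform $w_0$ (this is \autoref{vanishing}). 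Note also that the same periodicity trick is needed even on the blowup, because $\O(-\lceil\beta E\rceil)$ is not itself a power of a fixed relatively ample line bundle. A minor further slip: $\widetilde A$ and $\widetilde B$ do not have disjoint supports on $X'\times Y'$ (they meet in $\operatorname{Supp}A\times\operatorname{Supp}B$); what you actually need, and what is true, is that the two ideal sheaves are extended from the two tensor factors, so their intersection equals their product.
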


\begin{remark}\label{remark_weaker}
We note that by
\cite[proof of Lemma 1.2]{mustacta2002multiplier} 
we have
$$\sum_{\stackrel{0\ls \tau\ls w,}{{\scriptscriptstyle \tau\in \QQ}}}\overline{I^\tau}\,\overline{J^{w-\tau}}\,T
= \bigcap_{\stackrel{0\ls \tau\ls w,}{{\scriptscriptstyle \tau\in \QQ}}}\left(	\overline{I^\tau}T + \overline{J^{w-\tau}}T\right),
$$
therefore \autoref{thm_asymp_main} can be seen as a weaker version of the summation formula in \autoref{eqn_The_Q}.
\end{remark}

Before presenting the proof of the theorem, we need some preparatory results. We start with the following statement, which is a
%
%
generalization of \cite[Theorem 2.3]{hubl2008adjoints} that  is suitable for our purposes. First, we need the following definitions.

\begin{definition}[{\cite{hubl2008adjoints}}]\label{gsop_NP}
	Let $R$ be a regular domain and $\fx=x_1,\ldots, x_d$ a permutable regular sequence   of $R$.  We say $\fx$ is a {\it generalized regular system of parameters (grsop)}, if  for every subset $\fx'=x_{i_1}, \ldots, x_{i_r}$ of $\fx$ the ring  $R/(\fx')$ is also a regular domain. Let $\fx$ be an grsop and let $I$ be an $R$-ideal generated by monomials in $\fx$, i.e., $I=\left(\fx^{\ba_1},\ldots, \fx^{\ba_s}\right)$, where if $\ba=(a_1,\ldots, a_d)\in \NN^d$ then $\fx^\ba:=x_1^{a_1}\cdots x_d^{a_d}$. The {\it Newton polyhedron of $I$} is defined as
	$$\NP(I):=\conv\left(\ba_1,\ldots, \ba_s\right)+\RR^d_{\gs 0}\subseteq \RR^d.$$
\end{definition}

\begin{theorem}\label{thm_gen_regular}
	Let $R$, $\fx$, and  $I$ be as in \autoref{gsop_NP}. For every $w\in \QQ_{\gs 0}$ the ideal $\overline{I^w}$ is  generated by the monomials $\fx^\fe$ with  $\fe\in w\NP(I)$.
\end{theorem}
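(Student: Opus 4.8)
The plan is to reduce the statement to the case of monomial ideals in a polynomial ring, where the conclusion is classical (this is essentially \autoref{mon.aff.rees} combined with \autoref{rem_NND}), and to transfer that description back to $R$ using the grsop hypothesis. First I would observe that the collection of monomials $\fx^\fe$ with $\fe\in w\NP(I)$ does span a genuine ideal: $w\NP(I)$ is a positive polyhedron (a translate of $\RR^d_{\gs 0}$ in the strong sense of \autoref{def_pos_poly}), so if $\fx^\fe$ lies in it and $\fe'\gs \fe$ componentwise then $\fx^{\fe'}$ does too. Call this ideal $\overline{I}^{\mathrm{mon}}_w$. What has to be shown is the two inclusions $\overline{I^w}\subseteq \overline{I}^{\mathrm{mon}}_w$ and $\overline{I}^{\mathrm{mon}}_w\subseteq \overline{I^w}$.

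For the inclusion $\overline{I}^{\mathrm{mon}}_w\subseteq \overline{I^w}$: writing $w=p/q$ with $p\in\NN$, $q\in\ZZz$, it suffices (by \autoref{def_int_clo_rati}) to show $(\fx^\fe)^q\in\overline{I^p}$ whenever $q\fe\in qw\NP(I)=p\,\NP(I)$. So this reduces to the integer case: if $\fg\in p\,\NP(I)=\NP(I^p)$ then $\fx^\fg\in\overline{I^p}$. This is the standard monomial valuative criterion applied to the grsop $\fx$. Concretely, for each face of $\NP(I)$ with inner normal $\bh\in\NN^d$, consider the monomial valuation $v_\bh$ on $R$ determined by $v_\bh(\fx^\ba)=\langle\bh,\ba\rangle$ on monomials in $\fx$ and extended by the $\min$ rule on $\fx$-monomial expansions. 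The grsop condition is exactly what guarantees (via the argument in \autoref{lemma_Rees_package}, or directly by a regular-sequence/normal-domain argument) that each $v_\bh$ is an honest discrete valuation of $R$ and that $v_\bh(I^p)=p\min_{\ba\in\NP(I)}\langle\bh,\ba\rangle$; then $\fg\in\NP(I^p)$ forces $v_\bh(\fx^\fg)\gs v_\bh(I^p)$ for all the finitely many relevant $\bh$, and by \autoref{thm_Rees_val} (combined with the fact that these $v_\bh$ contain, or are cofinal with, the Rees valuations of $I$, which are themselves supporting-hyperplane monomial valuations) we get $\fx^\fg\in\overline{I^p}$.

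For the reverse inclusion $\overline{I^w}\subseteq\overline{I}^{\mathrm{mon}}_w$: again reduce to $n=wq\in\NN$, so I must show $\overline{I^n}\subseteq (\fx^\fe: \fe\in \NP(I^n))$. Here I would invoke precisely the monomial case. Since $I$ is generated by monomials in the grsop $\fx$ and each $R/(\fx')$ is regular, one can pass to the associated graded (or use the flatness coming from the regular sequence) to compare $\overline{I^n}$ with its monomial analogue: any element of $\overline{I^n}$ satisfies an integral equation, apply each valuation $v_\bh$ as above, and a given $\fx$-monomial appearing in its expansion must already lie in the half-space cut out by $\bh$, for every $\bh$; intersecting gives membership in $\NP(I^n)$. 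This is the content of \cite[Theorem 2.3]{hubl2008adjoints} for $n=1$ (i.e.\ $\overline{I}$ is monomial with Newton polyhedron $\NP(I)$), and the general $n$ follows since $\NP(I^n)=n\,\NP(I)$ and $I^n$ is again generated by $\fx$-monomials.

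Finally I would assemble: by \autoref{discrete_local} it suffices to prove the statement for $w$ of the form $n/e$ with $e$ a common multiple of the $v_\bh(I)$, and the two inclusions above, together with the well-definedness remark after \autoref{def_int_clo_rati}, give $\overline{I^w}=\overline{I}^{\mathrm{mon}}_w$ for all $w\in\QQ_{\gs 0}$. The main obstacle I anticipate is the verification that the supporting-hyperplane functions $v_\bh$ are genuine valuations of $R$ (not merely of a polynomial subring) and compute $v_\bh(I^n)$ correctly --- this is where the full force of the grsop hypothesis ``$R/(\fx')$ regular for every subsequence $\fx'$'' must be used, presumably by induction on $d$ peeling off one variable at a time and invoking normality of $R$ together with \cite[Lemma 5.2]{deConcini76}-style arguments or the criterion in \autoref{lemma_Rees_package}; once that is in hand, everything else is the standard monomial-ideal bookkeeping of \autoref{mon.aff.rees}.
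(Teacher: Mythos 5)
Your overall plan---reduce to \cite[Theorem 2.3]{hubl2008adjoints} and let the grsop hypothesis enter through supporting-hyperplane valuations---is the right neighborhood, and the paper's proof is indeed a modification of that argument. However, your treatment of the first inclusion is circular: you verify $v_{\bh}(\fx^{\fg})\gs v_{\bh}(I^p)$ only for the monomial functions $v_{\bh}$ and then invoke ``the fact that these $v_{\bh}$ contain, or are cofinal with, the Rees valuations of $I$.'' That fact is essentially the theorem itself (via \autoref{prop_min_vals}), so it cannot be assumed at this stage. No valuations are needed here: if $\fe\in w\NP(I)$ then $\fe\gs\sum_i c_i\ba_i$ componentwise with $c_i\in\QQ_{\gs 0}$ and $\sum_i c_i=w$; clearing denominators gives $\left(\fx^{q\fe}\right)^n\in\left(I^p\right)^n$ for suitable $n,q$, hence $\left(\fx^{\fe}\right)^q\in\overline{I^p}$ and $\fx^{\fe}\in\overline{I^w}$ directly from \autoref{def_int_clo_rati}.

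The more serious gap is in the reverse inclusion, where you never actually treat rational exponents. You ``reduce to $n=wq\in\NN$'' and prove $\overline{I^n}\subseteq(\fx^{\fe}\mid \fe\in\NP(I^n))$, but this reduction is invalid: knowing that $\overline{I^p}$ is the monomial ideal of $p\NP(I)$ does not determine $\overline{I^{p/q}}=\{f\mid f^q\in\overline{I^p}\}$, and in particular does not show that the latter is generated by monomials. (Also, elements of $R$ need not admit $\fx$-monomial expansions for a grsop---take $R=\kk[x,y,z]$ and $\fx=x,y$---so ``a given $\fx$-monomial appearing in its expansion'' is not meaningful in general.) The rational case is precisely what is new relative to \cite[Theorem 2.3]{hubl2008adjoints}. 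The paper handles it by reducing to the single-hyperplane ideals $I_H$, passing to the H\"ubl--Swanson extension $R'$ in which the hyperplane function becomes the order of a prime $\frp$ generated by part of a regular sequence, using \autoref{assoc_prime} to see that the relevant rational powers have no embedded primes (so equality can be checked after localizing at $\frp$), and then computing $\overline{\left(\frp^hR'_{\frp}\right)^w}=\frp^{\lceil wh\rceil}R'_{\frp}$ from the facts that the powers of $\frp R'_{\frp}$ are integrally closed and the associated graded ring is a domain. Your closing paragraph correctly identifies that the $v_{\bh}$ must be shown to be valuations computing rational powers, but the ``induction on $d$'' you gesture at is not supplied and is not how the argument is completed.
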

\begin{proof}
	We only  include relevant details of how a modification of the proof of \cite[Theorem 2.3]{hubl2008adjoints}  leads to our statement.  We refer the reader to the proof of \cite[Theorem 2.3]{hubl2008adjoints} for any undefined notation, which we will follow closely.
	
	Let $\fe\in w\NP(I)$ and $c_1,\ldots,c_s\in \QQ_{\gs 0}$ be such that $\sum_i c_i=w$ and $\fe\gs \sum_i c_i\ba_i$ componentwise. Write $c_i=\frac{m_i}{n}$ and $w=\frac{p}{q}$ where $m_i,p\in \NN$ and $n,q\in \ZZ_{>0}$.
	Therefore, $qn\fe\gs \sum_i qm_i\ba_i$. It follows that
	$$\left( \fx^{q\fe}\right)^n\in I^{\sum_i qm_i}=\left(I^p\right)^n,$$
	and then
	$\left(\fx^\fe\right)^q\in \overline{I^p}$, i.e.,
	$\fx^\fe \in \overline{I^w}$.
	
	We proceed with the reverse inclusion. Let $H$ be a non-coordinate supporting hyperplane of $\NP(I)$ defined by $\langle\bh,\fX\rangle=h$ where $\bh=(h_1,\ldots, h_d)\in \NN^d$, $h\in \ZZ_{>0}$, and $\fX=(X_1,\ldots, X_d)$. Let $I_H$ be the ideal generated by the monomials $\fx^\fe$ with $\langle \bh, \fe\rangle \gs h$. It suffices to show that the conclusion of the theorem holds for $I_H$. Let $\frp=(Y_1,\ldots, Y_t)R'$. By \autoref{assoc_prime} and \cite[Corollary 5.4.2]{huneke2006integral}, the rational powers of $\frp^hR'_\frq$ have no embedded primes for every prime ideal $\frp\subseteq \frq$. Thus, it suffices to show that
	$\overline{\left(\frp^hR'_\frp\right)^w}=\frp^{\lceil wh\rceil}R'_\frp$, which follows from $\frp R'_\frp$ being a prime ideal generated by a regular sequence, as in this case the powers $\frp^n R'_\frp$ are integrally closed and  $\gr_{R'_\frp}(\frp R'_\frp):=\oplus_{n\in \NN} \frp^{n}R'_\frp/\frp^{n+1}R'_\frp$ is a domain.
\end{proof}

\begin{corollary}\label{dvr}
	Let $\kk$ be an algebraically closed field. 
	Let $(V_1,\eta_1)$ and $(V_2,\eta_2)$ be two divisorial valuation rings containing $\kk$ and such that  $V:=V_1\otimes_\kk V_2$ is Noetherian.  Let $I=(\eta_1^{n_1})$ and $J=(\eta_2^{n_2})$ be ideals of $V_1$ and $V_2$, respectively. For every $\tau,w\in \QQ_{\gs 0}$ with $\tau\ls w$, we have the following inclusion of $V$-ideals
	$$\overline{\left(IT + JT\right)^w}\subseteq \overline{I^\tau}T + \overline{J^{w-\tau}}T.$$
\end{corollary}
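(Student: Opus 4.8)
The plan is to realize $IT+JT$ (with $T=V_1\otimes_\CC V_2$, the ring called $V$ in the statement) as a monomial ideal in a generalized regular system of parameters and then invoke \autoref{thm_gen_regular}. A divisorial valuation ring is a DVR, so $\eta_1=(t_1)$, $\eta_2=(t_2)$ for uniformizers $t_i$, and $I=(t_1^{n_1})$, $J=(t_2^{n_2})$. Put $x:=t_1\otimes 1$ and $y:=1\otimes t_2$ in $T$, and write $\kappa_i:=V_i/\eta_i$. Tensoring the short exact sequences $0\to V_1\xrightarrow{\,t_1\,}V_1\to\kappa_1\to 0$ and $0\to V_2\xrightarrow{\,t_2\,}V_2\to\kappa_2\to 0$ over $\CC$ with $V_2$, respectively $V_1$, preserves exactness, so $x,y$ is a permutable regular sequence on $T$ with $T/(x)\cong\kappa_1\otimes_\CC V_2$, $T/(y)\cong V_1\otimes_\CC\kappa_2$, and $T/(x,y)\cong\kappa_1\otimes_\CC\kappa_2$; moreover $IT+JT=(x^{n_1},y^{n_2})$, a monomial ideal in $(x,y)$ with Newton polyhedron $\NP(IT+JT)=\conv\{(n_1,0),(0,n_2)\}+\RR^2_{\gs 0}$.

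The crucial point, and the main obstacle, is that $T$, $T/(x)$, $T/(y)$, and $T/(x,y)$ are all \emph{regular domains}, so that $(x,y)$ is a grsop on $T$ in the sense of \autoref{gsop_NP}. That they are domains follows from $\CC$ being algebraically closed, via \cite[Chapter V, \S17.2, Corollary]{Bourbaki_Algebra_II}. For regularity I would use that, $\CC$ being perfect, the regular Noetherian $\CC$-algebras $V_1,V_2,\kappa_1,\kappa_2$ are geometrically regular over $\CC$; hence each of the base-change maps $V_2\to T$, $V_2\to T/(y)$, $\kappa_2\to T/(x)$, $\kappa_2\to T/(x,y)$ is a regular homomorphism, and a regular algebra over a regular Noetherian ring is regular. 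All four rings are Noetherian, being quotients of the Noetherian ring $T$.

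With \autoref{thm_gen_regular} available, $\overline{(IT+JT)^w}$ is generated by the monomials $x^ay^b$ with $(a,b)\in w\,\NP(IT+JT)$, i.e.\ with $a,b\in\NN$ and $n_2a+n_1b\gs wn_1n_2$ (the degenerate cases $n_1=0$, $n_2=0$, or $w=0$ being trivial). On the other side, $\overline{I^\tau}$ and $\overline{J^{w-\tau}}$ are rational powers of principal ideals in DVRs, so $\overline{I^\tau}=(t_1^{\lceil\tau n_1\rceil})$ and $\overline{J^{w-\tau}}=(t_2^{\lceil(w-\tau)n_2\rceil})$, whence $\overline{I^\tau}T+\overline{J^{w-\tau}}T=(x^{\lceil\tau n_1\rceil})+(y^{\lceil(w-\tau)n_2\rceil})$. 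It then remains to check that each generator $x^ay^b$ of $\overline{(IT+JT)^w}$ lies in this ideal. Since $x$ is a nonzerodivisor and $y\notin xT$ — because $T/xT\cong\kappa_1\otimes_\CC V_2$ is a domain in which the image $1\otimes t_2$ of $y$ is nonzero — one has $x^ay^b\in(x^m)\iff a\gs m$, and symmetrically for $y$. Hence if $x^ay^b$ were in neither summand, then $a\ls\lceil\tau n_1\rceil-1<\tau n_1$ and $b<(w-\tau)n_2$ (using $a,b\in\NN$), forcing $n_2a+n_1b<n_1n_2\bigl(\tau+(w-\tau)\bigr)=wn_1n_2$, contradicting $(a,b)\in w\,\NP(IT+JT)$. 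This yields $\overline{(IT+JT)^w}\subseteq\overline{I^\tau}T+\overline{J^{w-\tau}}T$, as desired.

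Apart from the regularity of the tensor products in the second paragraph — the only ingredient that is not entirely formal — every other step (the exact sequences, the identification of the quotients, the formula for rational powers of a principal ideal in a DVR, and the concluding inequality) is routine. The equality with $\overline{(IT)^\tau}+\overline{(JT)^{w-\tau}}$ in the statement follows, as elsewhere in the paper, from \autoref{base_change}.
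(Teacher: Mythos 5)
Your proposal is correct and follows essentially the same route as the paper: both arguments reduce to the fact that $\eta_1,\eta_2$ is a generalized regular system of parameters of $V$, apply \autoref{thm_gen_regular} to describe $\overline{(IT+JT)^w}$ via the Newton polyhedron of $(x^{n_1},y^{n_2})$, and conclude with the same elementary inequality $n_2a+n_1b<wn_1n_2$. The only difference is that the paper obtains the grsop fact by citing \cite[Theorem 6(e)]{tousi2003tensor} while you sketch it directly; your sketch is fine in characteristic zero, but be aware that the passage from ``$V_1$ is geometrically regular over $\CC$'' (trivial, as $\CC$ is algebraically closed) to ``the fibers $V_1\otimes_\CC\kappa(\frq)$ of $V_2\to T$ are geometrically regular over the typically huge residue fields $\kappa(\frq)$'' is exactly the nontrivial separable-base-change statement that the paper's citation is doing the work for.
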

\begin{proof}
	By \cite[Theorem 6(e)]{tousi2003tensor} and \cite[Chapter V, \S17.2, Corollary]{Bourbaki_Algebra_II}, $V$ is a regular domain and $\eta_1,\eta_2$ is a grsop of $V$ (see \autoref{gsop_NP}). Thus by \autoref{thm_gen_regular} the ideal $\overline{\left(IT + JT\right)^w}$ is generated by the monomials $\eta_1^{e_1}\eta_2^{e_2}$ with $n_2e_1+n_1e_2\gs wn_1n_2$.  Therefore, for any such $\eta_1^{e_1}\eta_2^{e_2}$ we must have $e_1\gs \tau n_1$ or $e_2\gs (w-\tau) n_2$. The result follows  as $\overline{I^\tau}$ is generated by $\eta_1^{\lceil\tau n_1\rceil}$ and  $\overline{J^{w-\tau}} $ by $\eta_2^{\lceil(w-\tau) n_2
		\rceil}$.
\end{proof}

The following lemma offers a vanishing result for rational powers of exceptional divisors of normalized blowups that can be of independent utility and interest.

\begin{lemma}\label{vanishing}
Adopt \autoref{setup_setup}. Let $v: X^+\longrightarrow X$ be the normalization of the blowup of $X$ along $\I$, and let $E$ be its exceptional divisor, i.e., $\I\cdot \O_{X^+}=\O_{X^+}(-E)$. 
Then  there exists a positive integer $n$ such that if $\beta\in \QQ$ and $\beta\gs n$ we have $R^1v_{*}\left(\O_{X^+}(-\lceil\beta E\rceil)\right)=0$.
\end{lemma}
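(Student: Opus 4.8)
The plan is to reduce the assertion to relative Serre vanishing on the blowup. Let $\widetilde{X}$ denote the blowup of $X$ along $\I$, let $\pi\colon\widetilde{X}\to X$ be its structure morphism and $\nu\colon X^{+}\to\widetilde{X}$ the normalization, so that $v=\pi\circ\nu$ with $\nu$ finite. The sheaf $\mathcal{L}:=\I\cdot\O_{\widetilde{X}}$ is invertible and relatively ample for $\pi$ (it is the twisting sheaf of the $\Proj$ defining $\widetilde{X}$), and $\I\cdot\O_{X^{+}}=\O_{X^{+}}(-E)=\nu^{*}\mathcal{L}$. Since $\nu$ is finite, hence affine, $R^{q}\nu_{*}=0$ for $q>0$, so the Leray spectral sequence gives $R^{1}v_{*}(\G)=R^{1}\pi_{*}(\nu_{*}\G)$ for every coherent $\O_{X^{+}}$-module $\G$; it therefore suffices to prove that $R^{1}\pi_{*}\bigl(\nu_{*}\O_{X^{+}}(-\lceil\beta E\rceil)\bigr)=0$ for all sufficiently large $\beta\in\QQ$.

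The second step takes care of the fact that $\lceil\beta E\rceil$ is in general only a Weil divisor and does not grow linearly in $\beta$. Write $E=\sum_{i=1}^{r}e_{i}E_{i}$ with the $E_{i}$ distinct prime divisors and $e_{i}\in\ZZ_{>0}$. Given $\beta\in\QQ$ with $\beta\gs 1$, put $k:=\lceil\beta\rceil-1\in\NN$ and $\beta':=\beta-k\in(0,1]$. Using $\lceil(\gamma+k)e_{i}\rceil=\lceil\gamma e_{i}\rceil+ke_{i}$ one gets $\lceil\beta E\rceil=kE+D_{\beta}$, where $D_{\beta}:=\sum_{i}\lceil\beta' e_{i}\rceil E_{i}$ satisfies $1\ls\lceil\beta' e_{i}\rceil\ls e_{i}$; hence $D_{\beta}$ lies in the finite set $\D:=\{\sum_{i}d_{i}E_{i}\ :\ d_{i}\in\ZZ,\ 1\ls d_{i}\ls e_{i}\}$. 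As $kE$ is Cartier, $\O_{X^{+}}(-\lceil\beta E\rceil)=\O_{X^{+}}(-D_{\beta})\otimes\O_{X^{+}}(-E)^{\otimes k}=\O_{X^{+}}(-D_{\beta})\otimes\nu^{*}\mathcal{L}^{\otimes k}$, so the projection formula yields $\nu_{*}\O_{X^{+}}(-\lceil\beta E\rceil)\cong\mathcal{H}_{D_{\beta}}\otimes\mathcal{L}^{\otimes k}$ on $\widetilde{X}$, where $\mathcal{H}_{D}:=\nu_{*}\O_{X^{+}}(-D)$ is coherent on $\widetilde{X}$ for each $D\in\D$.

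Finally I would invoke relative Serre vanishing (e.g.\ \cite[III, 2.2.1]{EGA}) for the projective morphism $\pi$ and the $\pi$-ample sheaf $\mathcal{L}$: for each of the finitely many sheaves $\mathcal{H}_{D}$, $D\in\D$, there is $k_{D}\in\ZZ$ with $R^{1}\pi_{*}(\mathcal{H}_{D}\otimes\mathcal{L}^{\otimes k})=0$ for all $k\gs k_{D}$. Set $n:=\max_{D\in\D}k_{D}+1$ (enlarging to make it a positive integer if necessary). If $\beta\in\QQ$ and $\beta\gs n$, then $k=\lceil\beta\rceil-1\gs\beta-1\gs k_{D_{\beta}}$, and hence $R^{1}v_{*}\bigl(\O_{X^{+}}(-\lceil\beta E\rceil)\bigr)=R^{1}\pi_{*}(\mathcal{H}_{D_{\beta}}\otimes\mathcal{L}^{\otimes k})=0$, as desired. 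The only genuinely delicate point is the bookkeeping in the middle step, namely peeling off the Cartier part $kE$ so that the ``fractional remainder'' $D_{\beta}$ ranges over only finitely many coherent sheaves; granting that, the result is just ordinary relative Serre vanishing, and the passage to $\widetilde{X}$ in the first step is routine.
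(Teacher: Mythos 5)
Your proof is correct, and while the overall skeleton matches the paper's (reduce to finitely many ``remainder'' sheaves tensored with high powers of the relatively ample exceptional invertible sheaf, then apply relative Serre vanishing), the way you produce that decomposition is genuinely different and in fact more elementary. The paper stays on $X^+$, identifies $\O_{X^+}(-\lceil\beta E\rceil)$ with the rational power $\overline{\cT^{\beta}}$ of $\cT=\O_{X^+}(-E)$ via \autoref{alternative}, and then invokes the stabilization $\overline{\cT^{\beta+m}}=\overline{\cT^{\beta}}\,\overline{\cT^{m}}$ for $\beta\gs m$ from \autoref{Noeth_rat_power}(3) (which rests on finite generation of the integral closure of the Rees algebra), together with \autoref{discrete}, to write $\overline{\cT^{\beta}}$ as one of finitely many sheaves $\overline{\cT^{\beta-m\lfloor\beta/m\rfloor+m}}$ twisted by $\O_{X^+}(-E)^{\otimes m\lfloor\beta/m\rfloor-m}$. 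You instead peel off the integer part of $\beta$ directly, using the identity $\lceil(\beta'+k)e_i\rceil=\lceil\beta'e_i\rceil+ke_i$ to get $\lceil\beta E\rceil=kE+D_{\beta}$ with $D_{\beta}$ ranging over a finite set; this is purely combinatorial and bypasses \autoref{Noeth_rat_port}(3) entirely \emph{(correction: \autoref{Noeth_rat_power}(3))}. Your detour through the non-normalized blowup $\widetilde{X}$ via the finite normalization $\nu$ and Leray is sound but not needed: since $kE$ is Cartier on $X^{+}$ and $\O_{X^+}(-E)$ is $v$-ample (as the paper notes, citing the Stacks Project), you could run the same argument with the finitely many coherent sheaves $\O_{X^+}(-D)$, $D\in\D$, directly on $X^{+}$. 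The trade-off is that the paper's route recycles machinery it has already built (\autoref{Noeth_rat_power}, \autoref{discrete}, \autoref{alternative}), whereas yours is self-contained and requires only the Cartier-ness of $E$ and ordinary relative Serre vanishing.
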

\begin{proof}
Set $\cT:=\O_{X^+}(-E)$. By \autoref{alternative}	we have $\overline{\cT^\beta}=\O_{X^+}\left(-\lceil\beta E\rceil\right)$ for every $\beta\in \QQ_{\gs 0}$. Notice that there exists $m\in \ZZ_{>0}$ such that for every $\beta \gs m$ we have
$\overline{\left(\cT^+\right)^{\beta+m}}
=
\overline{\left(\cT^+\right)^{\beta}}\,\,\overline{\left(\cT^+\right)^{m}}$.
Indeed, for any finite affine cover $\{U_\lambda\}_{\lambda\in\Lambda}$ of $X^+$, this holds locally on each $U_\lambda$ by \autoref{Noeth_rat_power}(3), producing finitely many $m_\lambda$. So, we take can take $m:=\lcm(m_\lambda\mid \lambda\in\Lambda)$.  Therefore for every $\beta\gs m$ we have
\begin{equation}\label{eq1}
	\overline{\left(\cT^+\right)^{\beta}}
	=
	\overline{\left(\cT^+\right)^{\beta-m
			\lfloor\frac{\beta}{m}\rfloor+m}}\,\,
	\left(\overline{\left(\cT^+\right)^{m}}\right)^{\lfloor\frac{\beta}{m}\rfloor-1}
	\cong
	\overline{\left(\cT^+\right)^{\beta-m\lfloor \beta/m\rfloor+m}}\otimes \O_{X^+}(-E)^{\otimes m\lfloor \beta/m\rfloor-m},
\end{equation}
where the last isomorphism holds by flatness.
%
Since $m\ls \beta-m\lfloor \beta/m\rfloor+m\ls 2m$, by \autoref{discrete} there are finitely many distinct ideal sheaves of the form $	\overline{\left(\cT^+\right)^{\beta-m\lfloor \beta/m\rfloor+m}}$.
Since $\O_{X^+}(-E)$ is  ample,  
by Serre's vanishing theorem \cite[Proposition III.5.3]{hartshorne1966}  there exists $n\in \ZZ_{>0}$ such that for every $\beta \gs n$
\begin{equation}\label{eq2}
	R^1v_{*}\left(
	\overline{\left(\cT^+\right)^{\beta-m\lfloor \beta/m\rfloor+m}}\otimes \O_{X^+}(-E)^{\otimes m\lfloor \beta/m\rfloor-m}
	\right) =0.
\end{equation}
The result now follows by combining \autoref{eq1} and \autoref{eq2}. 
\end{proof}

We are now ready to prove the main theorem of this section.

\begin{proof}[Proof of \autoref{thm_asymp_main}]\label{proof_thm_asymp}
	
	
	
	Fix $\alpha, w\in \QQ_{\gs 0}$ with  $0\ls \alpha\ls w$.  By \autoref{base_change} 
	we have
	$$\overline{I^\alpha}\,\overline{J^{w-\alpha}}T= \overline{(IT)^\alpha}\,\,\overline{(JT)^{w-\alpha}}
	\subseteq
	\overline{(IT + JT)^w},$$
	proving the first inclusion.
	 We proceed with the second inclusion to show that
	 there exists $w_0\in \QQ_{>0}$ such that
	\begin{equation}\label{what_to_show}
		\overline{\left(IT + JT\right)^w}\subseteq \overline{I^\tau}T+\overline{J^{w-\tau}}T
	\end{equation}
	for all $w_0\ls \tau\ls w-w_0$, $\tau\in \QQ$. 
	Let $X_1=\Spec(R)$ and $X_2=\Spec(S)$ so that $X_1\times X_2=\Spec(T)$. Let $\I=\tilde{I}\subseteq O_{X_1}$ and $\mathcal{T}=\tilde{J}\subseteq O_{X_2}$ be the sheaves associated to  $I$ and $J$, respectively. Let $v_1:X_1^+\longrightarrow X_1$ and $v_2:X_2^+\longrightarrow X_2$ be the normalization of the blowups of $X_1$ and $X_2$ along $\I$ and $\T$, respectively, so that
	$\I^+:=\I\cdot \O_{X_1^+}=:\O_{X_1^+}(-E_1)$ and  $\T^+:=\T\cdot \O_{X_2^+}=:\O_{X_2^+}(-E_2)$.
	Moreover, consider $Y:=X_1\times X_2$, $Y^+:=X_1^+\times X_2^+$, the canonical projections $$p_1:Y\longrightarrow X_1, \,\,\,\,p_2:Y\longrightarrow X_2, \,\, \,\,q_1:Y^+\longrightarrow X_1^+,\,\, \,\, q_2:Y^+\longrightarrow X_2^+,$$ and $f:=v_1\times v_2:Y^+ \longrightarrow Y$. These maps are represented in the following diagram.
	\begin{equation*}
		\begin{tikzcd}
			X_1^+  \arrow[d, "v_1"'] & \arrow[l, "q_1"']  Y^+=X_1^+\times X_2^+ \arrow{r}{q_2} \arrow{d}{f=v_1\times v_2} & X_2^+ \arrow{d}{v_2} \\
			X_1 &  \arrow{l}{p_1} Y=X_1\times X_2 \arrow[r, "p_2"'] & X_2
		\end{tikzcd}
	\end{equation*}

	
	
	We note that it suffices to show that the following two claims are true. Here, for simplicity of notation, for an ideal sheaf $\mathcal{J}\subseteq O_{X_i^+}$, resp.  $\mathcal{J}\subseteq O_{X_i}$, we write $q_i^{-1}\mathcal{J}$, resp. $p^{-1}_i\mathcal{J}$, for the ideal sheaf $q_i^{-1}\mathcal{J}\cdot \O_{Y^+}$, resp. $p_i^{-1}\mathcal{J}\cdot \O_{Y}$. We note that, since each $q_i$, $p_i$ is flat, the latter are isomorphic to $q_i^{*}\mathcal{J}$, resp.  $p_i^{*}\mathcal{J}$.
	
	\noindent{\bf Claim 1:} For any rational number $0\ls \tau \ls w$ we  have
	$$\overline{\left( q_1^{-1}\I^++  q_2^{-1}\T^+ \right)^w}\subseteq
	q_1^{-1}\overline{\left(\I^+\right)^\tau}
	+
	q_2^{-1}\overline{\left(\T^+\right)^{w-\tau}}.$$
	{\bf Claim 2:} There exists $w_0\in \QQ_{>0}$ such that if $w_0\ls \tau\ls w-w_0$, $\tau\in \QQ$ then
	$$f_*\left(q_1^{-1}\overline{\left(\I^+\right)^\tau}
	+
	q_2^{-1}\overline{\left(\T^+\right)^{w-\tau}}\right)=
	f_*\left(q_1^{-1}\overline{\left(\I^+\right)^\tau}\right)
	+
	f_*\left( q_2^{-1}\overline{\left(\T^+\right)^{w-\tau}}\right).
	$$
	Indeed, if both claims hold,  since $ q_1^{-1}\I^++  q_2^{-1}\T^+ =f^{-1}\left( p_1^{-1}\I+  p_2^{-1}\T\right)\cdot \O_{Y^+}$, then from \autoref{birr_transf} one obtains
	\begin{align*}
		\overline{( p_1^{-1}\I+  p_2^{-1}\T)^w}=f_*\left(\overline{\left( q_1^{-1}\I^++  q_2^{-1}\T^+ \right)^w}\right)&\subseteq
		f_*\left(q_1^{-1}\overline{\left(\I^+\right)^\tau}\right)
		+
		f_*\left( q_2^{-1}\overline{\left(\T^+\right)^{w-\tau}}\right) \\
		&= p_1^{-1}\overline{\left(\I\right)^\tau}
		+
		p_2^{-1}\overline{\left(\T\right)^{w-\tau}},
	\end{align*}
     for $w_0\ls \tau\ls w-w_0$, $\tau\in \QQ$.
	The last equality follows from K\"unneth formula (see \cite[tag 0BEC, Lemma 33.29.1]{stacks-project} and \cite[Proposition III.8.5]{Har}) using \autoref{birr_transf} and the facts that $v_{1*}\left(\O_{X_1^+}\right)=\O_{X_1}$ and $v_{2*}\left(\O_{X_2^+}\right)=\O_{X_2}$ \cite[proof of Corollary III.11.4]{Har}. Thus, \autoref{what_to_show} will follow by taking global sections, see \autoref{global}.
	
	We proceed to prove Claim 1, for which we can restrict to a product of affine schemes $U_1\times U_2\subset Y^+$, where $U_1=\Spec(A_1)\subset X_1^+$, $V=\Spec(A_2)\subset X_2^+$, and $A_1$ and $A_2$ are Noetherian normal domains. Therefore $\I^+$ and $\T^+$ can be identified with principal 
	ideals $(a_1)\subseteq A_1$ and $(a_2)\subseteq A_2$, respectively, and $q_1^{-1}\I^++  q_2^{-1}\T^+$ with $(a_1)A + (a_2)A$ where $A:= A_1\otimes_\kk A_2$.  By \autoref{assoc_prime} and \cite[Corollary 5.4.2]{huneke2006integral} the associated primes of the rational powers $\overline{(a_1)^\tau}$ and $\overline{(a_2)^{w-\tau}}$ all have codimension one. By \cite[Corollary 2.8]{ha2020symbolic} the associated primes of $(a_1)A + (a_2)A$ are sums $\frp_1A + \frp_2A$  with $\frp_i$ associated prime of $(a_i)$, $i=1,2$. Moreover,  rational powers localize  by \autoref{localize}.  Thus,  to show $\overline{\left((a_1)A + (a_2)A\right)^w}\subseteq
	\overline{(a_1)^\tau}A + \overline{(a_2)^{w-\tau}}A$, it suffices to do so in the localizations $(A_1)_{\frp_1}\otimes_\kk (A_2)_{\frp_2}$. But the latter  follows from \autoref{dvr}, finishing the proof of Claim 1.
	
	We continue with the proof of Claim 2. Consider the exact sequence
	\begin{align*}
		0\longrightarrow q_1^{-1}\overline{\left(\I^+\right)^\tau}
		\cap
		q_2^{-1}\overline{\left(\T^+\right)^{w-\tau}}
		&\longrightarrow
		q_1^{-1}\overline{\left(\I^+\right)^\tau}
		\oplus
		q_2^{-1}\overline{\left(\T^+\right)^{w-\tau}}\\
		&\longrightarrow
		q_1^{-1}\overline{\left(\I^+\right)^\tau}
		+
		q_2^{-1}\overline{\left(\T^+\right)^{w-\tau}}
		\longrightarrow 0.
	\end{align*}
	Applying $f_*$, the claim reduces to showing
	$$R^1f_*\left(q_1^{-1}\overline{\left(\I^+\right)^\tau}
	\cap
	q_2^{-1}\overline{\left(\T^+\right)^{w-\tau}}\right)= R^1f_*\left(q_1^{-1}\overline{\left(\I^+\right)^\tau}
	\otimes
	q_2^{-1}\overline{\left(\T^+\right)^{w-\tau}}\right)=0.$$
	By K\"unneth formula it suffices to show that there exists $w_0\in \QQ_{> 0}$ such that if $\beta\in \QQ$ and $\beta\gs w_0$ then
	\begin{equation}\label{eq:vanishing}
	R^1v_{1*}\left(\overline{\left(\I^+\right)^\beta}\right)=0 \qquad \text{and} \qquad
R^1v_{2*}\left(\overline{\left(\T^+\right)^\beta}\right)=0.
	\end{equation}
The latter vanishings hold by \autoref{vanishing}, concluding the proof of Claim 2.  This finishes the proof of the theorem.
\end{proof}

\begin{remark}
We note that if for certain ideals $I$ and $J$ the vanishings in \eqref{eq:vanishing} hold for every $\beta\gs 0$, then the proof of \autoref{thm_asymp_main} shows that these ideals satisfy the conclusion of \autoref{The_Question}. 
\end{remark}

By working locally in affine open subsets, \autoref{thm_asymp_main} implies the following more general statement (cf. \cite[Theorem 0.3]{mustacta2002multiplier}). As in the proof of  \autoref{thm_asymp_main}, we denote by $p^{-1}\mathcal{J}$, resp. $q^{-1}\mathcal{J}$, the ideal sheaves $p^{-1}\mathcal{J}\cdot \O_{X\times Y}$, resp. $q^{-1}\mathcal{J}\cdot \O_{X\times Y}$.

\begin{corollary}\label{cor_binomial_main}
	Let $X, Y$ be 
	normal varieties over an algebraically closed field $\kk$. 
	Let $\I\subseteq \O_X,  \T\subseteq \O_y$ be nonzero ideal sheaves,  and $p: X\times Y\longrightarrow X$, $q: X\times Y\longrightarrow Y$ the canonical projections.	There exists   $w_0\in \QQ_{> 0}$ such that if $w_0\ls \tau \ls w-w_0$, $\tau\in \QQ$ then
	$$
		\sum_{\stackrel{0\ls \alpha\ls w,}{{\scriptscriptstyle \alpha\in \QQ}}}
	p^{-1}\overline{\I^\alpha}\,\,
	q^{-1}\overline{\T^{w-\alpha}}
	\subseteq
	\overline{\left(p^{-1}\I\,+\,q^{-1}\T\right)^w}
	\subseteq
	p^{-1}\overline{\left(\I\right)^\tau}\,+\,
	q^{-1}\overline{\left(\T\right)^{w-\tau}}.
	$$
\end{corollary}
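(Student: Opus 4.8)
The plan is to reduce to the affine case \autoref{thm_asymp_main} by passing to a finite affine open cover of $X\times Y$ and producing a single $w_0$ that works on every chart. Since $X$ and $Y$ are normal varieties, choose finite affine open covers $X=\bigcup_i U_i$ and $Y=\bigcup_j V_j$ with $U_i=\Spec(A_i)$ and $V_j=\Spec(B_j)$, where each $A_i$, $B_j$ is a normal domain finitely generated over $\CC$. Then $X\times Y$ is covered by the affine opens $U_i\times V_j=\Spec(T_{ij})$ with $T_{ij}:=A_i\otimes_\CC B_j$, and each $T_{ij}$ is again a normal domain finitely generated over $\CC$ because $\CC$ is algebraically closed. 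Write $I_i:=\I(U_i)$ and $J_j:=\T(V_j)$, so that $\left(p^{-1}\I+q^{-1}\T\right)\big|_{U_i\times V_j}$ corresponds to the ideal $I_iT_{ij}+J_jT_{ij}$.

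The first thing I would check is that all the sheaf-theoretic operations appearing in the statement commute with restriction to these charts and agree with the ring-theoretic constructions of \autoref{section_rat_powers}. For an open $W\subseteq X$ one has $\overline{\I^w}\big|_W=\overline{(\I|_W)^w}$: the normalized blowup of $W$ along $\I|_W$ is the restriction $v^{-1}(W)\to W$ of the normalized blowup $v$ of $X$ along $\I$, and $v_*$ commutes with restriction to opens; combined with \autoref{global} this identifies $\overline{\left(p^{-1}\I+q^{-1}\T\right)^w}\big|_{U_i\times V_j}$ with the ideal $\overline{\left(I_iT_{ij}+J_jT_{ij}\right)^w}$ of $T_{ij}$, and likewise $\overline{\left(p^{-1}\I\right)^\tau}\big|_{U_i\times V_j}$, resp.\ $\overline{\left(q^{-1}\T\right)^{w-\tau}}\big|_{U_i\times V_j}$, with $\overline{(I_iT_{ij})^\tau}$, resp.\ $\overline{(J_jT_{ij})^{w-\tau}}$. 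Moreover the projection $A_i\to T_{ij}$ is a normal ring homomorphism: it is flat since $T_{ij}$ is a free $A_i$-module, and for every $\frp\in\Spec(A_i)$ and field extension $\KK$ of $\kappa(\frp)$ the ring $\KK\otimes_{A_i}T_{ij}=\KK\otimes_\CC B_j$ is normal because $B_j$ is normal over the algebraically closed field $\CC$. Hence \autoref{base_change} gives $\overline{I_i^\alpha}\,T_{ij}=\overline{(I_iT_{ij})^\alpha}$, which also shows that $p^{-1}\overline{\I^\alpha}\cdot\O_{X\times Y}$ restricted to $U_i\times V_j$ is the ideal sheaf associated to $\overline{I_i^\alpha}\,T_{ij}$ (and symmetrically for $q$).

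With these identifications the reduction is immediate: applying \autoref{thm_asymp_main} to $(A_i,B_j)$ with ideals $(I_i,J_j)$ yields $w_0^{(i,j)}\in\QQ_{>0}$ for which the asserted chain of inclusions holds on $U_i\times V_j$, and since there are only finitely many pairs we may put $w_0:=\max_{i,j}w_0^{(i,j)}\in\QQ_{>0}$. The first inclusion then holds on every chart (indeed for all $w$, being just \autoref{base_change}), and for $w_0\ls\tau\ls w-w_0$ with $\tau\in\QQ$ the second inclusion holds on every chart; as inclusions of quasi-coherent ideal sheaves and the formation of finite sums and products of ideal sheaves may be checked on an affine open cover, both inclusions hold on $X\times Y$. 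The only real work is the bookkeeping of the second paragraph, i.e.\ verifying that rational powers of ideal sheaves and the pullback operations $p^{-1}(-)$, $q^{-1}(-)$ restrict compatibly to the charts and match the affine constructions, so that \autoref{thm_asymp_main} and \autoref{base_change} apply verbatim; the uniformity of $w_0$ is then a triviality coming from the finiteness of the cover.
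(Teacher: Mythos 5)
Your proof is correct and is exactly the argument the paper intends: the paper's entire justification is the phrase ``by working locally in affine open subsets, \autoref{thm_asymp_main} implies the following,'' and you have simply filled in the routine verifications (compatibility of rational powers of ideal sheaves with restriction to affine charts, \autoref{base_change} for the projections, and taking the maximum of the finitely many $w_0^{(i,j)}$, which works because enlarging $w_0$ only restricts the allowed $\tau$).
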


Similarly as in \cite[Proposition 2.1]{mustacta2002multiplier}, the formula in \autoref{thm_asymp_main} can be used to approximate rational powers up to $\fm$-adic neighborhoods. This is also related to the main theorem in \cite{delSwan97, delSwan97err}.

\begin{corollary}
	Adopt \autoref{notation_main} with $R$ a  finitely generated $\kk$-algebra and normal  domain, and $S$ a polynomial ring with homogeneous maximal ideal $\fm$. For every  $n\in \NN$, there exists  $w_0=w_0(n)\in \QQ_{> 0}$ such that if $w,\ell\gs w_0$ then
	$$\overline{\left(IT+\fm^nT\right)^{w+\ell}}\subseteq
	\overline{I^w}T+\fm^{\lceil n\ell\rceil}T.$$
	Moreover, for any  nonzero ideal $J\subset S$
	 there exists a  $w_0\in \QQ_{> 0}$ such that if $w,\ell\gs w_0$ then
	$$\overline{\left(IT + JT\right)^{w+\ell}}\subseteq
	\overline{I^w}T+J^{\lfloor \ell\rfloor -\dim(S)+1}T.$$
\end{corollary}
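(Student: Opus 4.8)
The plan is to derive both inclusions from \autoref{thm_asymp_main} by specializing its free parameter $\tau$ to the value $w$, and then either identifying or bounding the resulting $\overline{(JT)^{w-\tau}}$-summand; note that the hypotheses of \autoref{thm_asymp_main} are met because a polynomial ring is a normal, finitely generated $\CC$-domain. For the first inclusion I would fix $n\in\NN$ and apply \autoref{thm_asymp_main} to the pair $I\subset R$ and $\fm^n\subset S$, with total exponent $w+\ell$. This produces a rational $w_0=w_0(n)>0$ such that $\overline{(IT+\fm^nT)^{w+\ell}}\subseteq\overline{I^\tau}T+\overline{(\fm^n)^{w+\ell-\tau}}T$ for every rational $\tau$ with $w_0\ls\tau\ls(w+\ell)-w_0$. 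Taking $\tau=w$, the admissibility constraint becomes exactly $w\gs w_0$ and $\ell\gs w_0$, and the right-hand side is then $\overline{I^w}T+\overline{(\fm^n)^\ell}T$. Since $S$ is a polynomial ring, the unique Rees valuation of $\fm$ is the order valuation $\ord$, so $\ord(\fm^n)=n$, and therefore \autoref{prop_min_vals} (equivalently \autoref{lemma_same_vals}) gives $\overline{(\fm^n)^\ell}=\{g\in S\mid\ord(g)\gs n\ell\}=\fm^{\lceil n\ell\rceil}$. Extending to $T$ yields the first displayed inclusion.

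For the second inclusion I would argue identically: applying \autoref{thm_asymp_main} to $I\subset R$ and $J\subset S$ with total exponent $w+\ell$ and setting $\tau=w$ produces a $w_1\in\QQ_{>0}$ such that $\overline{(IT+JT)^{w+\ell}}\subseteq\overline{I^w}T+\overline{J^\ell}T$ whenever $w,\ell\gs w_1$. It then remains to bound $\overline{J^\ell}$ inside $S$. Monotonicity of rational powers, immediate from the valuative formula in \autoref{lemma_same_vals} since $\ell\gs\lfloor\ell\rfloor$, gives $\overline{J^\ell}\subseteq\overline{J^{\lfloor\ell\rfloor}}$; and, $S$ being regular of Krull dimension $\dim(S)$, the Brian\c{c}on--Skoda theorem (see, e.g., \cite[Theorem 13.3.3]{huneke2006integral}, applied at each prime localization) gives $\overline{J^k}\subseteq J^{k-\dim(S)+1}$ for every integer $k\gs\dim(S)$. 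Hence $\overline{J^\ell}\subseteq J^{\lfloor\ell\rfloor-\dim(S)+1}$ as soon as $\lfloor\ell\rfloor\gs\dim(S)$. Choosing $w_0:=\max\{w_1,\dim(S)\}$ forces $\lfloor\ell\rfloor\gs\dim(S)$ whenever $\ell\gs w_0$, keeping the exponent $\lfloor\ell\rfloor-\dim(S)+1$ a positive integer, and completes the argument.

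I do not anticipate a serious obstacle: the statement is essentially bookkeeping on top of \autoref{thm_asymp_main}. The only delicate point is matching the two-sided admissibility hypothesis $w_0\ls\tau\ls(w+\ell)-w_0$ of \autoref{thm_asymp_main} against the two independent lower bounds $w\gs w_0$, $\ell\gs w_0$ of the corollary; this works precisely because we fix $\tau=w$, so that the complementary exponent equals $\ell$. In the second part one must additionally remember to enlarge $w_0$ past $\dim(S)$ so that the Brian\c{c}on--Skoda bound applies and yields a genuine ideal power. Everything else is a direct appeal to the cited facts about powers of the maximal ideal of a polynomial ring and to the Brian\c{c}on--Skoda theorem.
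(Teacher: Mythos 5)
Your proposal is correct and follows essentially the same route as the paper: both statements are obtained by specializing \autoref{thm_asymp_main} at $\tau=w$, then identifying $\overline{(\fm^n)^{\ell}}=\fm^{\lceil n\ell\rceil}$ (the paper phrases this via the integral closedness of powers of $\fm$, you via the order valuation — the same fact) and bounding $\overline{J^{\ell}}\subseteq\overline{J^{\lfloor\ell\rfloor}}\subseteq J^{\lfloor\ell\rfloor-\dim(S)+1}$ by Brian\c{c}on--Skoda. Your extra care in enlarging $w_0$ past $\dim(S)$ and in matching the two-sided constraint on $\tau$ is sound bookkeeping that the paper leaves implicit.
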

\begin{proof}
	The first statement follows from \autoref{thm_asymp_main} and 
	since the powers of $\fm$ are integrally closed, then $\overline{\left(\fm^n\right)^\ell}=\fm^{\lceil n\ell\rceil}$. The second statement also follows from  \autoref{thm_asymp_main} and by the inclusions $\overline{J^{\ell}}\subseteq \overline{J^{\lfloor \ell\rfloor }}\subseteq J^{\lfloor \ell\rfloor -\dim(S)+1}$, which hold by the {B}rian\c{c}on-{S}koda Theorem \cite{LipmSatha81} (see also \cite[Corollary 13.3.4]{huneke2006integral}).
\end{proof}



\bibliographystyle{plain}

\bibliography{References}


\end{document}